\title{Higher dimensional Moore bounds}
\author{Michael Goff}
\newtheorem{theorem}{Theorem}[section]
\newtheorem{lemma}[theorem]{Lemma}
\newtheorem{conjecture}[theorem]{Conjecture}
\newtheorem{problem}[theorem]{Problem}
\newtheorem{claim}[theorem]{Claim}
\newcommand{\K}{\Gamma}
\newcommand{\field}{{\bf k}}
\newcommand{\codim}{\mbox{\upshape codim}\,}
\newcommand{\Skel}{\mbox{\upshape Skel}\,}
\newcommand{\lk}{\mbox{\upshape lk}\,}
\newcommand{\g}{\mbox{\upshape gr}\,}
\newcommand{\avg}{\mbox{\upshape avg}\,}
\newcommand{\cp}{\mbox{\upshape oc}\,}
\newcommand{\CP}{\mbox{\upshape OC}\,}
\def\proofof#1{\smallskip\noindent {\it Proof of #1: \ }}
\def\endproof{\hfill$\square$\medskip}
\begin{document}

\begin{abstract}
We prove upper bounds on the face numbers of simplicial complexes in terms on their girths, in analogy with the Moore bound from graph theory.  Our definition of girth generalizes the usual definition for graphs.
\end{abstract}

\date{June 3, 2009}

\maketitle

\section{Introduction}
The Moore bound in graph theory answers the following classical question.  What is the maximum number of edges in a graph with $n$ vertices and no cycles with $g$ or fewer vertices?  Phrased differently, the Moore bound gives the fewest number of vertices in a graph with \textit{girth} (that is, the length of the shortest cycle) greater than $g$ and average degree $a$.

\begin{theorem}
\label{IrregularMoore}
\cite{MooreGraphs} Let $G$ be a graph with average degree $a \geq 2$ and girth greater than $g$.  Then $G$ has at least $n=n_0(a,g+1)$ vertices, where $$n_0(a,2r) = 2 \sum_{i=0}^{r-1} (a-1)^i,$$ $$n_0(a,2r+1) = 1+a\sum_{i=0}^{r-1}(a-1)^i.$$
\end{theorem}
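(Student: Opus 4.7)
The plan is to derive the bound by studying the ball of radius $r$ around a carefully chosen root in $G$, using the girth hypothesis to force this ball to be a tree, and then averaging over roots with the aid of convexity to handle the irregular case. Throughout let $p_r(d) := 1 + d\sum_{i=0}^{r-1}(d-1)^i$ denote the size of the depth-$r$ BFS tree of a $d$-regular graph.

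Consider first the odd case $g+1 = 2r+1$. Since girth $>2r$, for every vertex $v$ the induced subgraph on the ball $B_r(v)$ is a tree rooted at $v$: two distinct paths of length $\le r$ from $v$ meeting at a common endpoint would combine to a closed walk, hence to a cycle of length $\le 2r$, contradicting the girth hypothesis. If $G$ were $a$-regular this tree would contain exactly $p_r(a) = n_0(a,2r+1)$ vertices, and the bound would follow at once from $n \ge |B_r(v)|$. The even case $g+1 = 2r$ is analogous, rooted at an edge $\{u,w\}$ and treating $B_{r-1}(u)$ and $B_{r-1}(w)$ as two disjoint BFS trees whose combined size in the regular case is $n_0(a,2r)$.

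To handle the irregular case I replace this pointwise identity by an averaged one. Because $p_r$ is strictly convex on $[2,\infty)$, Jensen's inequality gives $\tfrac{1}{n}\sum_v p_r(d_v) \ge p_r(a)$. The target is then an averaged structural inequality of the form $\sum_v |B_r(v)| \ge \sum_v p_r(d_v)$, from which the existence of some $v$ with $|B_r(v)| \ge p_r(a) = n_0(a,2r+1)$ would follow, yielding $n \ge n_0(a,g+1)$. The analogous averaged statement over edges handles the even case.

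The main obstacle is precisely this averaged inequality. At depth $i$, the new layer of the BFS tree rooted at $v$ has size $\sum_u (d_u - 1)$ where $u$ ranges over the previous layer; but these $u$ are selected by the graph's structure rather than uniformly from $V(G)$, and their average degree need not equal $a$. Correcting for this bias—the content of the Alon--Hoory--Linial argument—requires a delicate induction on depth that carries along both the size of each layer and the sum of its degrees, invoking convexity at every step so that no degree information is lost when passing from one layer to the next. The same scheme, run from an edge-rooted BFS, delivers the even case; establishing this induction cleanly in both parities is the technical heart of the argument.
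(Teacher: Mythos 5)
The paper does not supply its own proof of this statement: it is cited from Alon--Hoory--Linial, and the introduction explicitly notes that their argument goes via \emph{random walks on the graph} (a non-returning walk with a carefully tuned transition matrix, a stationarity argument, and an AM--GM/log-convexity estimate). The paper's own Theorem~\ref{FixedGirthStrong} adapts exactly that machinery. Your proposal instead tries a BFS/Jensen route, and you correctly flag the remaining step, but that step is not merely ``the technical heart'' to be filled in later --- it is false.

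Concretely, the averaged inequality $\sum_v |B_r(v)| \ge \sum_v p_r(d_v)$ fails. Take $H = K_{D+1}$ and subdivide every edge into three parts, so the girth becomes $9 > 2r$ for $r=3$. The resulting graph has $D+1$ hubs of degree $D$ and $D(D+1)$ connectors of degree $2$. Every ball of radius $3$ has size $O(D)$ (the hub ball has $1 + 3D$ vertices, each connector ball about the same), so $\sum_v |B_3(v)| = O(D \cdot n)$. But $p_3(D) = 1 + D + D(D-1) + D(D-1)^2 = \Theta(D^3)$, so $\sum_v p_3(d_v) \ge (D+1) p_3(D) = \Theta(D^4)$, while $n = \Theta(D^2)$; for $D$ large the right side of your proposed inequality dwarfs the left. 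The reason is exactly the ``bias'' you mention: high-degree vertices need not be surrounded by high-degree vertices, so the depth-$i$ layer of a BFS tree rooted at a hub can be populated almost entirely by degree-$2$ vertices, and the tree is far smaller than $p_r(d_v)$. Jensen on the first coordinate cannot repair this; the defect grows with depth. There is also no clean ``induction on layer size and layer degree sum'' patch: the degree information genuinely leaks away as the BFS moves outward, which is precisely why Alon--Hoory--Linial abandon vertex-rooted BFS balls and instead count non-returning walks, where the stationary distribution on directed edges and an AM--GM bound on the walk probabilities force the average branching factor to be at least $a-1$ at every step regardless of local degree fluctuations. To repair your write-up you would need to replace the BFS/averaging scheme by that walk-counting argument (or some genuinely new idea), rather than trying to push the averaged BFS inequality through.
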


Theorem \ref{IrregularMoore} answers an old problem, which appears in \cite[Problem 10, p. 163]{Bollobas}.  A relatively simple proof for $a$-regular graphs is found in \cite{Biggs}, and a weaker inquality is proven in \cite{WeakerMoore}.  Theorem \ref{IrregularMoore} was proven in \cite{MooreGraphs} using random walks on the graph.

In this paper we consider similar bounds for simplicial complexes.  A \textit{simplicial complex} $\K$ with the vertex set $V(\K) = V = \{x_1,\ldots,x_n\}$ is a collection of subsets of $2^{V}$ called \textit{faces} such that $\K$ is closed under inclusion.  The \textit{dimension} of $\K$ is one less than the maximum cardinality of a face of $\K$.  If $W \subseteq V$, the \textit{induced subcomplex} of $\K$ on $W$, denoted $\K[W]$, has vertex set $W$ and faces $\{F: F \in \K, F \subseteq W\}$.  The \textit{face numbers} are given by $f_i(\K)$, which denotes the number of faces with $i+1$ vertices in $\K$.  For $F \subset V(\K)$, the \textit{link} of $F$, denoted $\lk_\K(F)$, is the simplicial complex that has vertex set $V-F$ and faces $\{G-F: F \subset G \in \K\}$.

Fix a base field $\field$.  The $i$-th reduced simplicial homology of a simplicial complex $\K$ with coefficients in $\field$ is denoted by $\tilde{H}_i(\K;\field)$.  We define the $(p-1)$-\textit{girth} of a simplicial complex $\K$ by $$\g_{p-1}(\K) := \min\{|W|: \tilde{H}_{p-1}(\lk_\K(F)[W];\field) \neq 0 \ \mbox{ for some } \ \emptyset \subseteq F \in \K\},$$ or $\infty$ is no such $W$ exists.  Although the value of $\g_{p-1}(\K)$ may depend on $\field$, our theorems hold regardless of which field is chosen.  Another paper \cite{MooreSC} proves an analogue of the Moore bound for simplicial complexes, but uses a different definition of girth.

In words, $\g_{p-1}(\K)$ is the fewest number of vertices in a subcomplex in $\K$ that has a nonzero $(p-1)$-cycle in homology, where we consider subcomplexes that are induced in links of faces.  When $\dim \K = 1$, i.e. $\K$ is a graph, our definition of $\g_1$ reduces to the usual definition of girth regardless of $\field$.  In that case, $\g_1(\K) = \infty$ if $\K$ is a forest, and $\g_1(\K)$ is otherwise the number of vertices in a shortest cycle of $\K$.

We note some properties of the girth.  The following is immediate from the definition.

\begin{lemma}
\label{GirthProperties}
Let $\K$ be a simplicial complex.  The following inequalities hold for all $p$. \newline
1) For all $W \subset V(\K)$, $\g_{p-1}(\K) \leq \g_{p-1}(\K[W])$. \newline
2) For all $F \in \K$, $\g_{p-1}(\K) \leq \g_{p-1}(\lk_\K(F))$.
\end{lemma}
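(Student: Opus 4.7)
The plan is to exploit the fact that $\g_{p-1}(\K)$ is defined as the minimum size of a ``witness'' pair $(F,W)$ with $F \in \K$ and $\tilde{H}_{p-1}(\lk_\K(F)[W];\field) \neq 0$. For each of the two inequalities it suffices to show that every witness for the complex on the right-hand side produces a witness of the same size for $\K$ itself; the minimum defining $\g_{p-1}(\K)$ is then bounded above by each such size, and hence by the minimum on the right.

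For part (1), I would start with any witness $(F',U)$ for $\g_{p-1}(\K[W])$, so that $F' \in \K[W]$ and $\tilde{H}_{p-1}(\lk_{\K[W]}(F')[U];\field) \neq 0$. The key bookkeeping step is the identification $\lk_{\K[W]}(F')[U] = \lk_\K(F')[U]$: both sides consist of the sets $G - F'$ with $F' \subseteq G \in \K$ and $G \subseteq F' \cup U$, together with the extra constraint $G \subseteq W$ on the left-hand side; but since $F' \in \K[W]$ and $U \subseteq W$ we have $F' \cup U \subseteq W$, so that extra constraint is automatic. Since $F' \in \K$, the same pair $(F',U)$ then witnesses $\g_{p-1}(\K)$ at size $|U|$.

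For part (2), I would start with a witness $(F',U)$ for $\g_{p-1}(\lk_\K(F))$ and apply the standard iterated-link identity $\lk_{\lk_\K(F)}(F') = \lk_\K(F \cup F')$. This is immediate from unravelling definitions: both sides equal $\{G - (F \cup F') : F \cup F' \subseteq G \in \K\}$. Setting $F'' := F \cup F' \in \K$, I obtain $\lk_{\lk_\K(F)}(F')[U] = \lk_\K(F'')[U]$, so $(F'',U)$ witnesses $\g_{p-1}(\K)$ at size $|U|$.

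The only genuine content is the two elementary identifications of induced subcomplexes of links, and I do not expect either to be a real obstacle; once they are in hand, both inequalities follow directly from the minimum-over-witnesses definition, which is presumably why the author labels this lemma as immediate.
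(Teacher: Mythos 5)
Your proof is correct, and it is exactly the unpacking of the definition that the paper has in mind when it calls the lemma ``immediate from the definition'' without giving a proof. Both key identifications — $\lk_{\K[W]}(F')[U] = \lk_\K(F')[U]$ when $U \subseteq W$ and $F' \in \K[W]$, and the iterated-link identity $\lk_{\lk_\K(F)}(F') = \lk_\K(F \cup F')$ — are verified correctly, and the witness-transfer framing makes the order of quantifiers transparent.
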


Our main results are as follows.  In Section \ref{OneGirth}, we prove an upper bound on the number of edges in terms of the one-girth: if $\K$ has $n$ vertices and dimension $d-1$, $\g_1(\K) > 2r$, and the quantity $r/\log(n/d)$ sufficiently small,  then $$f_1(\K) \leq (2^{-1}+\epsilon)(d-1)^{1-1/r}n^{1+1/r}$$ for an arbitrary $\epsilon > 0$, whereas if $\g_1(\K) > 2r+1$, then $$f_1(\K) \leq (2^{-1-1/r}+\epsilon)(d-1)^{1-1/r}n^{1+1/r}.$$  In Section \ref{OneGirthLargeI}, we prove that if $\g_1(\K) > 2r$, then for some constant $C_{r,i}$ that depends only on $r$ and $i$, $$f_i(\K) \leq C_{r,i}d^{1-1/r-1/r^2- \ldots - 1/r^i}n^{1+1/r+1/r^2+ \ldots + 1/r^i}.$$  In Section \ref{HighGirth}, we conjecture a general upper bound on $f_i$ when $\g_{p-1}(\K)$ is given, and we prove that conjecture in some special cases.  In Section \ref{Existence}, we establish the existence of some simplicial complexes with high girth and large face numbers using probabilistic methods.

\section{One-girth and the number of edges}
\label{OneGirth}
In this section we prove an upper bound on the number of edges of a simplicial complex when the $1$-girth is given.  The following is the main theorem of the section.

\begin{theorem}
\label{FixedGirthStrong}
Let $\K$ be a $(d-1)$-dimensional simplicial complex with $n$ vertices and $\g_1(\K) > 2r$, $r \geq 2$.  For every $\epsilon > 0$, there exists $\delta$ such that if $r/\log (n/d) < \delta$, then $$f_1(\K) \leq (2^{-1}+\epsilon)(d-1)^{1-1/r}n^{1+1/r}.$$
\noindent
Furthermore, if $\g_1(\K) > 2r+1$, then $$f_1(\K) \leq (2^{-1-1/r}+\epsilon)(d-1)^{1-1/r}n^{1+1/r}.$$
\end{theorem}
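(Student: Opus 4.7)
The plan is to adapt the proof of the classical irregular Moore bound (Theorem~\ref{IrregularMoore}) to the simplicial setting; the additional factor $(d-1)^{1-1/r}$ will arise from a careful accounting of how short cycles in the $1$-skeleton---which, unlike in the graph case, are permitted by $\g_1(\K) > 2r$---are compensated by higher-dimensional faces of $\K$, whose contribution is controlled by $\dim \K = d-1$. I would begin with a minimum-degree reduction: iteratively delete from the $1$-skeleton every vertex of degree less than $(1-\epsilon/2)\cdot 2f_1(\K)/n$. By Lemma~\ref{GirthProperties}.1 the girth hypothesis is preserved under this pruning, and the assumption $r/\log(n/d) < \delta$ ensures that only a small fraction of vertices need be removed, reducing the problem to a nearly regular $1$-skeleton of average degree close to $a := 2f_1/n$.

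Fix a vertex $v_0$ and run a non-backtracking random walk (as in the proof of Theorem~\ref{IrregularMoore} via \cite{MooreGraphs}) or a BFS exploration of the $1$-skeleton to depth $r$. In the graph case, $\g_1 > 2r$ coincides with ordinary graph girth $> 2r$, the walk forms a tree, and one recovers the classical Moore bound. In the simplicial case, collisions may occur; each collision produces a closed walk of length $\leq 2r$, whose vertex set of size $\leq 2r$ must bound a $2$-chain by the girth hypothesis. A combinatorial accounting---charging each collision to $2$-faces (or iterated higher-dimensional faces accessed through Lemma~\ref{GirthProperties}.2 applied in nested links)---then bounds the total ``collision contribution'' and yields an inequality of the form $a^r \leq (1+\epsilon)(d-1)^{r-1}n$, which rearranges to the claimed bound $f_1(\K) \leq (2^{-1}+\epsilon)(d-1)^{1-1/r}n^{1+1/r}$. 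The sharper estimate under $\g_1(\K) > 2r+1$ comes from carrying the walk one additional step, in direct analogy with the odd-girth case of the classical argument.

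The main technical obstacle is making the charging step quantitative: one must show that distinct collisions correspond to essentially distinct higher-dimensional faces of $\K$, with total contribution bounded by the correct exponent of $d-1$. For $r=2$ this reduces to a cherry count: the filling of the $4$-cycle $u v_i w v_j u$ (valid because $|W|=4\leq 2r$) for non-adjacent vertices $u,w$ with common neighbors $v_i, v_j$ forces the triangles $\{u,v_i,v_j\}$ and $\{w,v_i,v_j\}$ into $\K$; iterating $\g_1 > 4$ inside nested links then constrains the local structure of $N(u)\cap N(w)$ in terms of $d$. Combining with the convexity estimate $\sum_v \binom{\delta(v)}{2} \geq 2 f_1^2/n - f_1$ and an inductive-on-$d$ bound on $f_2(\K) = \tfrac{1}{3}\sum_v f_1(\lk_\K(v))$ (each link being $(d-2)$-dimensional with $\g_1 > 4$ by Lemma~\ref{GirthProperties}.2) yields the $r=2$ case. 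Extending this scheme from single $4$-cycles to iterated collisions of length-$r$ walks---while avoiding over-counting and extracting the sharp constants $2^{-1}+\epsilon$ and $2^{-1-1/r}+\epsilon$ as in the classical Moore recursion---is the technical heart of the argument.
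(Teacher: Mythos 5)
Your overall plan (adapt the irregular Moore bound via non-backtracking walks on the $1$-skeleton, using the girth hypothesis to control branching) is the right one, and your $r=2$ observation is essentially Lemma~\ref{TreeLemma} in that case: for non-adjacent $u,w$, the set $N(u)\cap N(w)$ must be a clique (else a $4$-cycle with no diagonal would give $\g_1\le 4$), so by flagness and dimension $|N(u)\cap N(w)|\le d-1$. But the central mechanism you propose for general $r$ --- ``charging collisions to $2$-faces'' --- is not what the paper does, and I do not think it can be made to work as stated. The paper instead changes the \emph{definition} of the walk: a non-returning walk is one in which no three consecutive vertices $v_i,v_{i+1},v_{i+2}$ span a $2$-face (in addition to $v_i\neq v_{i+2}$). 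With this definition, Lemma~\ref{TreeLemma} shows the branching factor is at most $d-1$ at each step, so at most $(d-1)^{r-1}$ non-returning walks of length $r$ join any two vertices. The proof is purely local: if $d$ non-returning walks of length $r$ ran from $u$ to $v$ with distinct penultimate vertices $v_1,\dots,v_d$, flagness plus $\dim\K = d-1$ would force some $v_i,v_j$ non-adjacent, and then the closed walk $P_i(P_j)^{-1}$ of length $2r$ has at most one corner spanning a $2$-face, so Lemma~\ref{CycleLemma} gives $\g_1(\K)\le 2r$, a contradiction.

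Your ``collision'' framing points in the wrong direction for two reasons. First, collisions are not rare: a single terminal vertex may be reached by up to $(d-1)^{r-1}$ distinct walks, so the useful quantity is the \emph{multiplicity} per endpoint, not the total number of collisions. Second, any count against $f_2$ is circular: you propose to control $f_2(\K)=\tfrac{1}{3}\sum_v f_1(\lk_\K(v))$ inductively, but that requires a bound on $\sum_v(\deg v)^{3/2}$, which is of the same difficulty as the $f_1$ bound you are trying to prove. (The paper does prove such power-sum bounds --- Lemma~\ref{PowerDegree} --- but precisely by invoking Lemma~\ref{TreeLemma}, not by first bounding $f_1$.)

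You also omit the machinery needed for irregular degrees, which is where the sharp constants $2^{-1}+\epsilon$ and $2^{-1-1/r}+\epsilon$ actually come from. The minimum-degree deletion still leaves degrees varying by a factor of $2$. The paper first excises a small set $U$ of directed edges along which the link is too large relative to the endpoint degrees, and then (Claim~\ref{Claim1}) constructs edge weights $z_{vw}$ making the uniform distribution on directed edges stationary for the non-returning walk; an AM--GM/entropy estimate in the style of \cite{MooreGraphs} then yields on average $\bigl((1-\alpha_3)a'\bigr)^r$ walks of length $r+1$ from a random starting edge, with the final inequality delivered by Lemma~\ref{TreeLemma} (and its second part for the odd-girth case). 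None of this appears in your sketch, and without it one does not recover the stated constants.
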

In the case that $d=2$, the upper bound on $f_1$ of Theorem \ref{FixedGirthStrong} is approximately equal to that of Theorem \ref{IrregularMoore} for values of $r$ small relative to $\log(n)$.  Our proof uses some of the same techniques used in \cite{MooreGraphs} to prove Theorem \ref{IrregularMoore}.

To prove theorem \ref{FixedGirthStrong} we introduce flag complexes.  We say that a simplicial complex $\K$ is \textit{flag} if all the minimal non-faces of $\K$ consist of two vertices, or equivalently if $F$ is a face of $\K$ whenever all the $2$-subsets of $F$ are faces.  A flag complex is also called a \textit{clique complex}.  We establish some properties of girths of flag complexes.  The second property allows us to assume that $\K$ is flag in the proof of Theorem \ref{FixedGirthStrong}.

\begin{lemma}
\label{FlagProperties}
Let $\K$ be a simplicial complex. Then the following hold. \newline
1) Let $F \in \K$ and $W \subset V(\K)$ so that $F \cap W = \emptyset$ and $F \cup \{w\}$ is a face in $\K$ for all $w \in W$.  If $\K$ is flag, then $\K[W] = \lk_\K(F)[W]$. \newline
2) $\K$ is flag if and only if $\g_1(\K) \geq 4$. \newline
3) If $\K$ is flag and $\g_{p-1}(\K) < \infty$, then there exists $W \subset V(\K)$ such that $|W|=\g_{p-1}(\K)$ and $\tilde{H}_{p-1}(\K[W]);\field) \neq 0$.
\end{lemma}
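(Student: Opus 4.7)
The three parts build on each other, so my plan is to dispatch Part 1 first and then deploy it as the main tool for Parts 2 and 3. For Part 1, I would verify both inclusions of $\K[W] = \lk_\K(F)[W]$. The inclusion $\lk_\K(F)[W] \subseteq \K[W]$ is automatic from subset-closure of $\K$, while $\K[W] \subseteq \lk_\K(F)[W]$ requires showing $G \cup F \in \K$ for every $G \in \K[W]$. Flagness reduces this to checking that every 2-subset of $G \cup F$ is an edge of $\K$: pairs internal to $G$ or to $F$ are edges since $G, F \in \K$, and a mixed pair $\{g, f\}$ with $g \in G \subseteq W$ and $f \in F$ is handled directly by the hypothesis $F \cup \{g\} \in \K$.

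For Part 2, the forward direction is a short case analysis. For flag $\K$, the only way to produce $\tilde{H}_1(\lk_\K(F)[W]; \field) \neq 0$ with $|W| \leq 3$ is a hollow triangle on $|W| = 3$; but the three edge hypotheses together with $F \in \K$ put all 2-subsets of $F \cup W$ into $\K$, so flagness forces $F \cup W \in \K$, filling the triangle and contradicting hollowness. For the reverse direction I would argue by contrapositive: if $\K$ is not flag, take a minimal non-face $F$ with $|F| \geq 3$, fix three vertices $v_1, v_2, v_3 \in F$, and set $G = F \setminus \{v_1, v_2, v_3\}$. Minimality puts $G$, every $G \cup \{v_i\}$, and every $G \cup \{v_i, v_j\}$ into $\K$, while $G \cup \{v_1, v_2, v_3\} = F$ is excluded, so $\lk_\K(G)[\{v_1, v_2, v_3\}]$ is a hollow triangle and $\g_1(\K) \leq 3$.

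For Part 3, I would pick $F \in \K$ and $W$ realizing the minimum $|W| = \g_{p-1}(\K)$ with $\tilde{H}_{p-1}(\lk_\K(F)[W]; \field) \neq 0$, and then argue that every $w \in W$ must satisfy $F \cup \{w\} \in \K$. If some $w$ violated this, then $\{w\} \notin \lk_\K(F)$; subset-closure would then prevent any face of $\lk_\K(F)$ from containing $w$, so $\lk_\K(F)[W \setminus \{w\}]$ and $\lk_\K(F)[W]$ would share the same face collection and hence the same $\tilde{H}_{p-1}$, contradicting the minimality of $|W|$. With this claim in hand, Part 1 identifies $\K[W]$ with $\lk_\K(F)[W]$, and the conclusion follows. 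The main obstacle I anticipate is the minimality bookkeeping in this last step --- confirming that a formal vertex of $W$ not supporting any face of the link truly contributes nothing to $\tilde{H}_{p-1}$, so that the shrinking-$W$ argument is airtight.
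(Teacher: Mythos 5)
Your proposal is correct and follows essentially the same line as the paper. Part 1 and the reverse direction of Part 2 match the paper's argument step for step; for the forward direction of Part 2 you verify directly that flagness fills in the hollow triangle, whereas the paper instead invokes Part 1 to conclude that links of faces are flag (the two are morally identical); and for Part 3 you spell out the minimality bookkeeping that the paper compresses into ``immediate from the first,'' which is the right thing to worry about and your treatment of ghost vertices is exactly the point that makes it work.
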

\begin{proof}
Suppose the conditions of the first claim hold, and let $F'$ be a face of $\K[W]$.  The conditions imply that there is an edge $uv$ for all $u,v \in F \cup F'$, and so $F \cup F'$ is a face in $\K$.  Then $F' \in \lk_\K(F)$.  Also, every face of $\lk_\K(F)$ is a face in $\K$, and this proves the first claim.  The third claim is immediate from the first.

To prove the second claim, first suppose that $\K$ is flag.  Then the link of every face is also flag by the first claim, and so there is no $F \in \K$ and $W \subset V(\K)$ so that $|W|=3$ and $\lk_\K[W]$ is exactly a graph-theoretic $3$-cycle.  Hence $\g_1(\K) \geq 4$.  Now suppose that $\K$ is not flag, and let $W$ be a minimal non-face of $\K$ with $|W| \geq 3$.  Choose $W' \subset W$ with $|W'| = |W|-3$.  Then $\lk_\K(W')[W-W']$ is a $3$-cycle, and so $\g_1(\K) = 3$.
\end{proof}

The proof of Theorem \ref{FixedGirthStrong} requires several technical lemmas.  The first is a condition on when, given that there exists a graph-theoretic cycle in a simplicial complex on vertices $v_1,\ldots,v_r$, we can conclude that $\g_1(\K) \leq r$.
\begin{lemma}
\label{CycleLemma}
Let $\K$ be a simplicial complex containing a graph theoretic cycle with (not necessarily distinct) vertices $v_1, \ldots, v_r$ and edges $v_i v_{i+1}$ for $1 \leq i \leq r$ (subscripts are mod $r$).  Suppose that there exists at most one value of $i$ such that $\{v_{i-1},v_i,v_{i+1}\}$ is a face in $\K$.  Then $\g_1(\K) \leq r$.
\end{lemma}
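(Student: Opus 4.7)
I would prove the lemma by induction on $r$. The base case $r=3$ is immediate: the hypothesis forces $\{v_1, v_2, v_3\}$ not to be a face of $\K$ (otherwise all three values of $i$ would give the same face $\{v_1,v_2,v_3\}$ and contribute to the count, violating ``at most one''), so $\K[\{v_1,v_2,v_3\}]$ is a hollow triangle with $\tilde H_1 \neq 0$, witnessing $\g_1(\K) \leq 3$ via $(F, W) = (\emptyset, \{v_1, v_2, v_3\})$.

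For the inductive step, set $W = \{v_1, \ldots, v_r\}$ viewed as a set, so $|W| \leq r$. I would first dispose of the easy sub-case where the $v_i$ are distinct and $\K$ contains no chord of the cycle, meaning no edge $v_iv_j$ with $|i-j| \not\equiv \pm 1 \pmod r$. In this sub-case $\K[W]$ contains no $2$-face whatsoever, because any $2$-face needs its three edges in $\K$ and therefore at least one chord. Hence $\K[W]$ is the cycle graph on $r$ vertices, $\tilde H_1(\K[W]) \cong \field$, and $\g_1(\K) \leq r$ with $F = \emptyset$.

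In the remaining sub-case there is either a chord $v_iv_j$ or a repeated vertex $v_i = v_j$ on the cycle. Either way I would split the cycle into two closed walks of length strictly less than $r$ --- using the chord as a new closing edge, or cutting at the repeated vertex --- and then verify that at least one sub-walk inherits the ``at most one consecutive-vertex triangle'' hypothesis. Applying the inductive hypothesis to that sub-walk yields $\g_1(\K)$ strictly less than $r$, hence $\g_1(\K) \leq r$.

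The main obstacle is this inheritance argument. Splitting at a chord $v_iv_j$ introduces up to four new wrap-around consecutive-vertex triangles --- sets like $\{v_{j-1}, v_j, v_i\}$ and $\{v_j, v_i, v_{i+1}\}$ --- which were not on the original cycle's list, and an analogous issue arises at a repeated vertex. If enough of these wrap-around triangles are simultaneously $2$-faces of $\K$, both sub-walks could violate the hypothesis and a naive induction would stall. I expect the resolution combines two ingredients: first, choosing the splitting chord or repeated vertex carefully (say, a shortest chord, which limits the amount of $2$-face structure confined to one sub-walk); second, the observation that an accumulation of wrap-around $2$-faces at a chord $\{v_i, v_j\}$ forces $\lk_\K(\{v_i, v_j\})$ to acquire many vertices with extra incidences, producing a short nontrivial $1$-cycle in this link. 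Lemma~\ref{GirthProperties} then converts that link bound into the desired bound $\g_1(\K) \leq r$.
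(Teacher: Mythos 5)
Your base case and your chordless/distinct sub-case are both correct and match the paper. The gap is exactly where you say it is, and your two proposed repair ideas don't fully close it as stated --- but you are circling the right idea with ``shortest chord.''

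The ingredient you are missing is a preliminary cleanup that makes the induction unnecessary. Before splitting at a chord, observe: if for some $i$ with $v_{i-1}\neq v_{i+1}$ the edge $v_{i-1}v_{i+1}$ exists but the triangle $\{v_{i-1},v_i,v_{i+1}\}$ is \emph{not} a face, then $\K[v_{i-1},v_i,v_{i+1}]$ is already a hollow $3$-cycle and $\g_1(\K)\le 3$, done. Having disposed of that case, every $i$ for which a $2$-apart chord $v_{i-1}v_{i+1}$ exists (or $v_{i-1}=v_{i+1}$) is an $i$ for which $\{v_{i-1},v_i,v_{i+1}\}$ is a face, and by hypothesis there is at most one such $i$; call it $i=2$ without loss of generality. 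Now for every $i'\neq 2$ there is no $2$-apart chord and no equality $v_{i'-1}=v_{i'+1}$. Choosing $j<k$ with $k-j$ minimal subject to $v_jv_k\in\K$, $k\ge j+2$, and $(j,k)\neq(1,3)$, the cleanup forces $k\ge j+3$, and minimality of $k-j$ forces $\K[v_j,\ldots,v_k]$ to have no extra chords and hence no $2$-faces at all. That induced subcomplex is therefore literally a graph-theoretic cycle on at most $r$ vertices, and $\g_1(\K)\le\g_1(\K[v_j,\ldots,v_k])\le r$ directly. No inductive hypothesis on a sub-walk is invoked, so the inheritance problem you identified simply never arises. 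The link-of-a-chord argument you speculate about is not needed --- the cleanup step, which costs only the observation that an unfilled $2$-apart chord gives $\g_1\le 3$, is what dissolves the ``wrap-around triangle'' accumulation you were worried about.
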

\begin{proof}
In the case that $r=3$, the conditions imply that $\K[v_1,v_2,v_3]$ is the boundary of a triangle, and so $\g_1(\K) = 3$.  Assume that $r \geq 4$.  If for some $i$, $v_{i-1} \neq v_{i+1}$, and there exists an edge $v_{i-1}v_{i+1}$ but no triangle $\{v_{i-1},v_i,v_{i+1}\}$, then $\g_1(\K)=3$.  Therefore, we may assume this condition: suppose that there exists at most one value of $i$ such that either $v_{i-1}=v_{i+1}$ or there is an edge $v_{i-1}v_{i+1}$.  Assume without loss of generality that if such an $i$ exists, $i=2$.  If the $v_i$ are distinct and $\K[v_1,\ldots,v_r]$ contains no edges except each $v_iv_{i+1}$, then $\g_1(\K) \leq \g_1(\K[v_1,\ldots,v_r]) = r$ and the lemma is true.  Otherwise, we may choose $j$ and $k$ so that $k-j$ is minimal, subject to the following conditions: $k \geq j+2$, $v_jv_k$ is an edge in $\K$, and $(j,k) \neq (1,3)$.  Since for all $i' \neq 2$, $v_{i'-1} \neq v_{i'+1}$ and $v_{i'-1}v_{i'+1}$ is not an edge in $\K$, such $j$ and $k$ can always be chosen so that $k \geq j+3$.  Then $\K[v_j, \ldots, v_k]$ is a graph theoretic cycle and the lemma holds.
\end{proof}

The next lemma roughly states that if $\K$ is flag and $\g_{p-1}(\K) > 2p$, then $\K$ does not have too many edges.  Define the $i$-\textit{skeleton} of $\K$, denoted $\Skel_i(\K)$, to be the simplicial complex with vertex set $V(\K)$ and faces $\{F: F \in \K, |F| \leq i+1\}$.  For $v \in V(\K)$, $\deg v$ denotes the number of edges that contain $v$.

\begin{lemma}
\label{HighDegLemma}
Let $p$ be fixed, and let $\K$ be a flag $(d-1)$-dimensional simplicial complex with $n$ vertices, and suppose that $\g_{p-1}(\K) > 2p$ and $d < (1-\delta)n$ for some $\delta>0$.  Then there exists an $\epsilon>0$, which depends only on $\delta$ and $p$, such that $f_1(\K) < {n \choose 2} - \epsilon n^2$.  Furthermore, in the case that $p=2$, for every $\epsilon' > 0$ there exists $\delta' > 0$ such that if $d < \delta' n$, then $f_1(\K) < \epsilon' n^2$.
\end{lemma}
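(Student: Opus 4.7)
The plan is to rephrase the hypothesis on the complement graph $H = G^c$ of the 1-skeleton $G$ of $\K$.  Since $\K$ is flag, an induced matching of size $p$ in $H$ on a vertex set $W$ of size $2p$ forces $\K[W]$ to be the boundary of the $p$-dimensional cross-polytope, which has nontrivial $(p-1)$-homology; this would contradict $\g_{p-1}(\K) > 2p$.  Together with $\alpha(H) = \omega(G) \leq d$ (from the flag and dimension assumption), the lemma reduces to the following graph-theoretic claim: if $H$ satisfies $\alpha(H) \leq d$ and contains no induced matching of size $p$, and $d < (1-\delta)n$, then $|E(H)| \geq c_p(n-d)^2$ for some $c_p > 0$ depending only on $p$.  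Since $|E(H)| = \binom{n}{2} - f_1(\K)$ and $(n-d)^2 \geq \delta^2 n^2$, this yields $f_1(\K) \leq \binom{n}{2} - \epsilon n^2$ with $\epsilon = c_p \delta^2$.

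I would prove this reduced claim by induction on $p$.  For the base case $p = 2$, for any edge $uv \in E(H)$ the common $H$-non-neighbors of $u$ and $v$ form an independent set in $H$ (two such vertices connected by an $H$-edge, together with $uv$, would be an induced $2K_2$), so their number is at most $\alpha(H) \leq d$.  Hence $\deg_H(u) + \deg_H(v) \geq n - d - 2$ for every edge $uv \in E(H)$, and a standard degree-sum argument (with a mild induction on $n$ to handle vertices of very high $H$-degree) then yields $|E(H)| \geq (n-d)^2/8 - O(n)$.  For the inductive step, for every edge $uv \in E(H)$ the induced subgraph on the common $H$-non-neighbors of $u$ and $v$ has no induced $(p-1)$-matching (otherwise appending $uv$ would yield an induced $p$-matching), so a case analysis on the size of this common non-neighborhood, applying the $p=2$ degree argument when it is close to $d$ and the inductive hypothesis on this subgraph otherwise, yields $|E(H)| \geq c_p(n-d)^2$.

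For the furthermore statement (specific to $p = 2$), the bound $|E(H)| \geq (n-d)^2/8$ gives only $f_1(\K) \leq 3n^2/8$ for $d \ll n$, which is not a vanishing fraction of $n^2$.  The sharper bound uses the observation that for nonadjacent $u, v \in V(\K)$, the common $G$-neighborhood $N_G(u) \cap N_G(v)$ is a $G$-clique (any two nonadjacent common neighbors, together with $u$ and $v$, would induce a $C_4$), so $|N_G(u) \cap N_G(v)| \leq \omega(G) = d$.  Splitting the cherry sum $\sum_w \binom{\deg_G(w)}{2}$ into contributions from edge-pairs (three times the number of triangles) and from nonadjacent pairs (bounded by $d\bigl(\binom{n}{2} - f_1(\K)\bigr)$), combining with Jensen's inequality, and bounding the number of triangles recursively by applying the same argument to each link $\lk_\K(v)$ (which is flag, has $\g_1 > 4$, and has dimension $d - 2$), produces $f_1(\K) = o(n^2)$ as $d/n \to 0$.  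The principal technical obstacle is the inductive step: the case split on the size of the common non-neighborhood must be calibrated so that the constant $c_p$ does not degenerate with $p$, since a weak bound in either case would accumulate badly through the recursion.
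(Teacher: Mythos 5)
Your proposal is correct in outline but takes a genuinely different route from the paper, so let me compare.

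For the first statement, the paper argues by contradiction and induction on $p$ directly on $\K$: if $f_1(\K) \geq \binom{n}{2}-\epsilon n^2$, almost all vertices have near-full degree, so (using $d<(1-\delta)n$ and flagness) one finds two non-adjacent high-degree vertices $u,v$; their common neighborhood $W$ is large and $\K[W]$ is dense, so the inductive hypothesis yields $\g_{p-2}(\K[W])\le 2p-2$, and suspending by $\{u,v\}$ gives a small $(p-1)$-cycle. You instead pass to the complement graph $H$ and isolate a clean, purely graph-theoretic extremal statement: $\alpha(H)\le d$ and no induced $p$-matching force $|E(H)|\ge c_p(n-d)^2$. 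Your reduction is sound: an induced $p$-matching in $H$ on $W$ makes $\K[W]$ the clique complex of $K_{p\times 2}$, i.e.\ the boundary of the $p$-cross-polytope, which has nonzero $\tilde H_{p-1}$; and a $G$-clique is a simplex, so $\alpha(H)\le d$. Your reduced claim does not carry the full strength of $\g_{p-1}(\K)>2p$, but it implies the conclusion, so that is fine. One calibration point you should fix in writing this up: the threshold in the inductive step should be taken roughly at $(n+d)/2$, not ``close to $d$.'' If every edge $uv$ has common-non-neighborhood $S_{uv}$ of size at most $(n+d)/2$, then each edge has $H$-degree sum at least roughly $(n-d)/2$ and a greedy vertex-removal (valid down to $d+1$ vertices, since $\alpha\le d$ forces an edge) yields $\Omega((n-d)^2)$ edges; otherwise some edge has $|S_{uv}|>(n+d)/2$, and since $H[S_{uv}]$ inherits $\alpha\le d$ and has no induced $(p-1)$-matching, the inductive hypothesis applies with $|S_{uv}|-d\ge (n-d)/2$. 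Choosing the threshold near $d$ would make the inductive-hypothesis case degenerate. Also, since $p$ is fixed, the recursion depth is $p-2$ and the constants $c_p$ cannot ``accumulate badly''; your worry on that point is unfounded.

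For the ``furthermore'' ($p=2$) statement the two arguments are again genuinely different but share a key observation: for non-adjacent $u,v$, the common $G$-neighborhood is a clique (else an induced $C_4$ gives $\g_1\le 4$), hence of size at most $d$. The paper localizes around a single vertex of minimal degree (after pruning low-degree vertices) and splits the paths of length two from it into those ending at neighbors (then $\lk(v)$ is so dense that the first part of the lemma forces $\g_1(\lk(v))\le 4$) and those ending at non-neighbors (then some non-neighbor has $>\dim\K$ common neighbors, two of which are non-adjacent, giving $C_4$). You instead run a global cherry-sum count combined with Jensen. This does work, but be careful about the word ``recursively'': applying the ``furthermore'' statement itself to the links would be circular. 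What you need, and what suffices, is to apply the \emph{first} part of the lemma to each link $\lk_\K(v)$ with $\deg v \ge 2(d-1)$, giving $f_1(\lk_\K(v))<(1/2-\epsilon_0)(\deg v)^2$ there, while links of smaller-degree vertices contribute at most $O(nd^2)$ in total. Plugging into $\frac12\sum_v(\deg v)^2 - f_1 = 3f_2 + \sum_{u\not\sim v}|N(u)\cap N(v)|$ and using $\sum_v(\deg v)^2\ge 4f_1^2/n$ gives $4\epsilon_0 f_1^2/n \le f_1 + O(nd^2) + d\binom{n}{2}$, from which $f_1 = O(\sqrt{\delta'})\,n^2$ for $d\le\delta' n$, proving the claim. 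In short, your approach works and is arguably more modular (the complement-graph reduction is a nice way to decouple the combinatorics from the topology), whereas the paper's argument stays closer to the simplicial complex and reuses the common-neighborhood suspension trick at both levels.
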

\begin{proof}
We prove the first statement by induction on $p$.  In the case that $p=1$, $d < n$ implies that $\K$ is not a simplex, which implies that $\g_0(\K) = 2$; hence the $p=1$ case is empty.  Let $\delta$ be given, and suppose by way of contradiction that for arbitrarily small $\epsilon$, there exists a $(d-1)$-dimensional simplicial complex $\K$ with $n$ vertices satisfying $d < (1-\delta)n$ and $f_1(\K) \geq {n \choose 2}-\epsilon n^2$.  There exist $\epsilon_1, \epsilon_2 > 0$ with $\epsilon_1 \rightarrow 0$ and $\epsilon_2 \rightarrow 0$ as $\epsilon \rightarrow 0$ such that $\K$ contains a set of vertices $Y$ such that $|Y| \geq (1-\epsilon_2)n$ and every $v \in Y$ has at least $(1-\epsilon_1)n$ neighbors; otherwise there would be at least $\epsilon_1 \epsilon_2 n^2/2$ pairs of vertices not joined by an edge, a contradiction.  Since $\dim \K < (1 - \delta)n$ and $\K$ is flag, then if $\epsilon_2 < \delta$, there exists $u,v \in Y$ that are not adjacent.  Let $W$ be the set of vertices adjacent to both $u$ and $v$.  Then $|W| \geq (1-2\epsilon_1)n$, and so $f_1(\K[W]) \geq {n \choose 2}-\epsilon n^2-2\epsilon_1 n^2$.  By choosing $\epsilon$ and $\epsilon_1$ sufficiently small, it follows by the inductive hypothesis that $\g_{p-2}(\K[W]) \leq 2p-2$.  By Part 3 of Lemma \ref{FlagProperties}, we may choose $W' \subset W$ so that $\tilde{H}_{p-2}(\K[W'];\field) \neq 0$ and $|W'| \leq 2p-2$.  Since $\K[W',u,v]$ is the suspension of $\K[W']$, $\tilde{H}_{p-1}(\K[W',u,v],\field) \neq 0$ and we conclude that $\g_{p-1}(\K) \leq 2p$.

Now suppose that for some fixed $\epsilon'$ and for arbitrarily small $\delta'$, there exists a simplicial complex $\K$ with $n$ vertices, dimension $\delta' n$, and $f_1(\K) \geq \epsilon' n^2$, and we derive a contradiction to $\g_{1}(\K) > 4$.  Assume that $\delta' < \epsilon' / 4$.  If there is a vertex $v$ of $\K$ such that $\deg v < (\epsilon'/2)n$, delete $v$ from $\K$, and repeat this operation until the resulting simplicial complex $\K'$ contains no such vertex.  Then $f_1(\K') \geq (\epsilon'/2) n^2$, and every vertex of $\K'$ has degree at least $(\epsilon'/2)n$.  Choose $v$ to be a lowest degree vertex of $\K'$, and let $a := \deg v$.  There are at least $a^2-a$ paths of length $2$ in $\Skel_1(\K')$ with starting vertex $v$ and ending vertex not $v$.  Consider two cases.

Case 1: There are at least $a^2-a-\delta' n^2$ paths of length $2$ starting at $v$ and ending at a neighbor of $v$; call this set of paths $P$.  Then there are at least $(a^2-a-\delta' n^2)/2 = {a \choose 2} - (\delta'/2)n^2$ edges in $\lk_{\K'}(v)$, since every path in $P$ contains an edge in $\lk_{\K'}(v)$, and every such edge is contained in two paths in $P$.  Since $f_0(\lk_{\K'}(v)) = a \geq \epsilon' n/2 > 2\dim \K$, it follows that $\g_1(\lk_{\K'}(v)) \leq 4$ if $\delta'$ is chosen sufficiently small, by the first part of the lemma.  This implies that $\g_1(\K) \leq 4$ by the two parts of Lemma \ref{GirthProperties}.

Case 2: There are fewer than $a^2-a-\delta' n^2$ paths of length $2$ starting at $v$ and ending at a neighbor of $v$.  Then there are more than $\delta' n^2$ paths of length $2$ starting at $v$ and ending at vertices that are neither neighbors of $v$ nor $v$ itself.  Hence there exists a vertex $u \neq v$ such that $u$ is not a neighbor of $v$, and there are $s > \delta'n$ paths of length $2$ starting at $v$ and ending at $u$.  Label those paths $(v,v_1,u), \ldots (v,v_s,u)$.  Since $\dim \K' < s$ and $\K'$ is flag, there exist $i \neq j$ such that $v_i$ and $v_j$ are not neighbors in $\K'$.  Then $\tilde{H}_1(\K'[v,u,v_i,v_j];\field) \neq 0$ and hence $\g_1(\K') \leq 4$.  We conclude that $\g_1(\K) \leq 4$, which proves the lemma.
\end{proof}

\begin{lemma}
\label{HighDegCor}
Let $\K$ be a simplicial complex with dimension $d-1$, $n$ vertices, and $\g_1(\K) > 4$.  For every $\epsilon_1, \epsilon_2 > 0$, there exists $\delta$ such that if $d < \delta n$, then $\K$ contains at most $\epsilon_1 n$ vertices that each have degree at least $\epsilon_2 n$.
\end{lemma}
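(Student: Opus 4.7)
The plan is to reduce this to Lemma \ref{HighDegLemma} by a straightforward double-counting argument. First, I note that the hypothesis $\g_1(\K) > 4$ automatically means $\g_1(\K) \geq 4$, so by part 2 of Lemma \ref{FlagProperties} the complex $\K$ is flag; hence the second statement of Lemma \ref{HighDegLemma} applies (with $p = 2$).

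Given $\epsilon_1, \epsilon_2 > 0$, I set $\epsilon' := \epsilon_1 \epsilon_2 / 2$ and invoke the second part of Lemma \ref{HighDegLemma} with this value of $\epsilon'$ to obtain a corresponding $\delta' > 0$ such that whenever $d < \delta' n$, we have $f_1(\K) < \epsilon' n^2$. Then I declare $\delta := \delta'$.

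For the main argument, suppose for contradiction that $\K$ contains a set $S$ of more than $\epsilon_1 n$ vertices, each of degree at least $\epsilon_2 n$. Summing degrees over $S$ gives
$$\sum_{v \in S} \deg v \geq |S| \cdot \epsilon_2 n > \epsilon_1 \epsilon_2 n^2.$$
Every edge of $\K$ is counted at most twice in this sum (once for each endpoint that lies in $S$), so
$$f_1(\K) \geq \tfrac{1}{2} \sum_{v \in S} \deg v > \tfrac{\epsilon_1 \epsilon_2}{2} n^2 = \epsilon' n^2,$$
contradicting the conclusion of Lemma \ref{HighDegLemma} under the assumption $d < \delta n$.

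There is no real obstacle here: the corollary is a mechanical consequence of the edge-count bound already proved, and the only things to be careful about are (a) noting that $\g_1(\K) > 4$ guarantees flagness so that Lemma \ref{HighDegLemma} is applicable, and (b) correctly translating ``many high-degree vertices'' into a lower bound on $f_1(\K)$ via double-counting.
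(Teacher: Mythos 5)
Your proof is correct and is exactly the argument the paper has in mind — the paper's own proof is the single sentence that the result follows from the second part of Lemma \ref{HighDegLemma}, and your double-counting derivation of the edge lower bound is the implicit step it leaves to the reader. Your care in checking that $\g_1(\K) > 4$ gives flagness (via Lemma \ref{FlagProperties}, part 2), so that Lemma \ref{HighDegLemma} applies with $p=2$, is a correct and worthwhile detail.
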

\begin{proof}
The result follows from the second part of Lemma \ref{HighDegLemma}.
\end{proof}

Our proof of Theorem \ref{FixedGirthStrong} uses a variation of the non-returning walk on $\vec{\Skel}_1(\K)$ that was introduced in \cite{MooreGraphs}.  Here $\vec{\Skel}_1(\K)$ is the directed graph with vertex set $V(\K)$ and directed edges $\vec{uv}$ and $\vec{vu}$ whenever $uv$ is an edge in $\K$.  Let $Q = \{\vec{v_0v_1}, \vec{v_1v_2}, \ldots, \vec{v_{k-1}v_k}\}$ be a path on $\vec{\Skel}_1(\K)$, which we define by its edges.  We say that that $Q$ is a \textit{non-returning walk of length} $k$ if for all $i$, $v_i \neq v_{i+2}$ and $\{v_i,v_{i+1},v_{i+2}\}$ is not a face in $\K$.

\begin{lemma}
\label{TreeLemma}
Let $\K$ be a $(d-1)$-dimensional simplicial complex satisfying $\g_1(\K) > 2r$.  Then there are at most $(d-1)^{r-1}$ non-returning walks of length $r$ between two given vertices of $\vec{\Skel}_1(\K)$.  Furthermore, if $\g_1(\K) > 2r+1$, then a non-returning walk of length $r+1$ starting with the directed edge $\vec{uv}$ and another of length $r+1$ starting with $\vec{vu}$ have different endpoints.
\end{lemma}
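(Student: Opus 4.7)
The plan is to prove the first assertion by induction on $r$ and the second by a direct concatenation argument; both proceed by applying Lemma \ref{CycleLemma} to a closed walk obtained by splicing two walks together.

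For the induction step with $r \geq 2$ in the first claim, I would partition the non-returning walks from $u$ to $w$ by their first internal vertex $v_1$, and let $S$ denote the set of $v_1$ that arise. The crucial sub-claim is that for distinct $v_1, v_1' \in S$, the triple $\{u, v_1, v_1'\}$ is a face of $\K$. To see this, pick walks $Q$ and $Q'$ starting with $\vec{uv_1}$ and $\vec{uv_1'}$ respectively, and form the closed walk $Q \cdot (Q')^{-1}$ of length $2r$. Since $\K$ is flag by Lemma \ref{FlagProperties} (because $\g_1(\K) > 4$), every non-joint triple in this closed walk coincides with a non-returning triple of $Q$ or of $Q'$, and is therefore not a face; the only potential face triples are the two joint triples, namely $\{u, v_1, v_1'\}$ at $u$ and $\{v_{r-1}, w, v_{r-1}'\}$ at $w$. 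If at most one of these were a face, Lemma \ref{CycleLemma} would give $\g_1(\K) \leq 2r$, contradicting the hypothesis; hence both are faces. Flagness then promotes $S \cup \{u\}$, whose every pair is an edge, to a face of $\K$, giving $|S| \leq d-1$. The inductive hypothesis applied to $\K$ (with $\g_1(\K) > 2r > 2(r-1)$) bounds the number of walks with given first internal vertex $v_1$ by $(d-1)^{r-2}$, and summing over $v_1 \in S$ yields the desired bound $(d-1)^{r-1}$.

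For the second claim, suppose for contradiction that non-returning walks $Q_1 : u, v, u_2, \ldots, u_r, w$ and $Q_2 : v, u, u_2', \ldots, u_r', w$ share endpoint $w$. Let $Q_1'$ denote $Q_1$ with its first edge $\vec{uv}$ deleted, and consider the closed walk $Q_1' \cdot Q_2^{-1}$ based at $v$, whose vertex sequence is $v, u_2, \ldots, u_r, w, u_r', \ldots, u_2', u, v$ of length $2r+1$. A position-by-position enumeration shows that every triple in this closed walk is a non-returning triple of $Q_1$ or of $Q_2$ except the single joint $\{u_r, w, u_r'\}$ at $w$; crucially, the two triples adjacent to the basepoint $v$, namely $\{u, v, u_2\}$ and $\{u_2', u, v\}$, are the non-returning triples at position $0$ of $Q_1$ and $Q_2$ respectively. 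Therefore at most one triple in the closed walk is a face, and Lemma \ref{CycleLemma} forces $\g_1(\K) \leq 2r+1$, contradicting $\g_1(\K) > 2r+1$.

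The main obstacle will be the careful bookkeeping needed to verify that each non-joint triple in the concatenated closed walks is indeed a non-returning triple of the constituent walks --- a delicate point near the splice positions and the basepoint --- together with handling degenerate situations such as $w \in \{u, v\}$, walks that share intermediate vertices, or a joint triple that collapses to an edge when two incoming walks agree at that position. In each such case the enumeration still reduces to the clean scheme above, so Lemma \ref{CycleLemma} continues to yield the desired contradiction.
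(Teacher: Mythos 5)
Your proof is correct and uses essentially the same approach as the paper: form the spliced closed walk of length $2r$ (respectively $2r+1$), observe that every non-joint triple is a non-returning triple of one of the two constituent walks and hence is not a face, and invoke Lemma~\ref{CycleLemma} together with flagness to bound the number of branches. The paper partitions by the penultimate vertex rather than the first internal vertex and organizes the step slightly differently --- it assumes $d$ branch vertices, uses flagness and the dimension bound to find a non-adjacent pair, and then applies Lemma~\ref{CycleLemma} directly to that pair's spliced cycle --- whereas you take the contrapositive of Lemma~\ref{CycleLemma} first to conclude both joint triples are faces and only then apply the clique bound; these are logically equivalent reformulations of the same argument. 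Your remarks on degenerate splices (shared intermediate vertices, a joint triple collapsing to an edge, $w\in\{u,v\}$) are correct but could be dispatched in one line: Lemma~\ref{CycleLemma} is stated for cycles with possibly repeated vertices, so no separate case analysis is actually needed.
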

\begin{proof}
The first statement is clear for $r=1$, and we use induction on $r$.  Consider non-returning walks of length $r$ between vertices $u$ and $v$, and suppose by way of contradiction that there are $d$ vertices $v_1, \ldots, v_d$ such that there exists a non-returning walk $P_i = (\vec{uu_{i,1}},\ldots, \vec{u_{i,r-2}v_i}, \vec{v_iv})$ for $1 \leq i \leq d$.  Since $r \geq 2$, $\K$ is flag and therefore has no $(d+1)$-clique.  For some $1 \leq i < j \leq d$, there exist $v_i$ and $v_j$ that are not joined by an edge.  The cycle $C = P_i (P_j)^{-1}$, which is constructed by traversing $P_i$ and then $P_j$ in reverse, satisfies the conditions of Lemma \ref{CycleLemma}, which is a contradiction to $\g_1(\K) > 2r$.  It follows that there exist at most $d-1$ vertices $v_1, \ldots, v_{d-1}$ as above.  By the inductive hypothesis, there are at most $(d-1)^{r-2}$ non-returning walks of length $r-1$ from $u$ to each of the $v_i$, and the first statement follows.

To prove the second statement, suppose that there exist two non-returning walks $\vec{uv}P_1$ and $\vec{vu}P_2$, each of length $r+1$, that end at the same vertex.  Then the cycle $\vec{uv}P_1(P_2)^{-1}$ satisfies the conditions of Lemma \ref{CycleLemma}, and so $\g_1(\K) \leq 2r+1$, a contradiction.
\end{proof}

\proofof{Theorem \ref{FixedGirthStrong}}
Our proof is an adaptation of the proof of the main theorem of \cite{MooreGraphs}.  Let $\epsilon$ be given, and suppose $a$ is the average degree of a vertex in $\K$.  If $M$ is a value that depends only on $\epsilon$, then we may assume that $a > Md$ by choosing $\delta < 1/(\log 2M)$.  We prove the following variant, which implies the theorem: $$n > \frac{((1-\epsilon)a)^r}{(d-1)^{r-1}} \quad \mbox{if} \quad \g_1(\K) > 2r \quad \mbox{and}$$ $$n > \frac{2((1-\epsilon)a)^r}{(d-1)^{r-1}} \quad \mbox{if} \quad \g_1(\K) > 2r+1.$$

If a vertex $v$ satisfies $\deg v < a/2$, then $\K[V(\K)-\{v\}]$ has a higher average degree than $\K$.  Also, $\g_1(\K[V(\K)-\{v\}]) \geq \g_1(\K)$ by Part 1 of Lemma \ref{GirthProperties}.  By considering $\K[V(\K)-\{v\}]$ instead of $\K$, we may assume without loss of generality that all vertices of $\K$ have degree at least $a/2$.

We consider random non-returning walks on $\vec{\Skel}_1(\K)$.  First we specify which edges can be used for those walks.  Fix $\alpha > 0$ so that $\alpha$ depends only on $\epsilon$.  Define $U'$ to be the set of all directed edges $\vec{uv}$ such that either $f_0(\lk_\K(uv)) \geq \alpha f_0(\lk_\K(u))$ or $f_0(\lk_\K(uv)) \geq \alpha f_0(\lk_\K(v))$.  By applying Lemma \ref{HighDegCor} to links of vertices and then the first part of Lemma \ref{FlagProperties}, we conclude that $|U'| < \alpha_1 an$, where $\alpha_1$ can be chosen arbitrarily small by choosing $M$ sufficiently large.

Next set $U := U'$.  If there exists a vertex $v$ such that more than $(4/3) f_0(\lk_\K(v))$ directed edges incident to $v$ are in $U$, then add all directed edges incident to $v$ to $U$.  Repeat this process until no more directed edges are added to $U$ in this way.

We show that $|U| \leq 3\alpha_1 an$.  For any set of directed edges $X$ of $\vec{\Skel}_1(\K)$ and $\vec{uv} \in X$, define the quantity $k(X,\vec{uv})$ to be $1$ is neither $u$ nor $v$ is incident to a directed edge not in $X$, $2$ if exactly one of $u$ and $v$ is adjacent to a directed edge not in $X$, and $3$ otherwise.  Then define $$K(X) := \sum_{\vec{uv} \in X}k(X,\vec{uv}).$$  Note that $K(U') \leq 3\alpha_1 an$, and for any set of directed edges $X$, $K(X) \geq |X|$.  Also, $K(U)$ does not increase at any step in the construction of $U$.  To see that, consider the operation of adding all directed edges incident to $v$ to $U$.  At most $(2/3) f_0(\lk_\K(v))$ directed edges are added, each has $k$-value at most $2$, and at least $(4/3)f_0(\lk_\K(v))$ directed edges have their $k$-values decreased by $1$.  It follows that $|U| \leq 3\alpha_1 an$.  Let $E$ be the set of directed edges of $\vec{\Skel}_1(\K)$ that are not in $U$; $E$ is the set of directed edges that we allow to be used in our random non-returning walks.  By construction, if $\vec{uv} \in E$, then $\vec{vu} \in E$.

For vertices $u,v$ such that $uv \in \K$, define $$T_u(v) := \{w \in V(\K): \vec{vw} \in E, uw \not\in \K\}$$ and $t_u(v) := |T_u(v)|$.  By construction, if $\vec{uv} \in E$, then $t_u(v) > 0$.  Also define $$T(v) := \{w \in V(\K): \vec{vw} \in E\}$$ and $t(v) := |T(v)|$.  Let $a'$ be the average value of $t(v)$ over all vertices $v$.  From $|U| \leq 3 \alpha_1 an$ we conclude that $a-a' \leq (3/2) \alpha_1 a$.  Furthermore, by construction $\frac{t_u(v)}{t(v)} \geq 1-3\alpha$ for all $\vec{uv} \in E$.

We now define a \textit{non-returning random walk} of length $k$, starting at a directed edge $e$, by a transition matrix $P$ with rows and columns indexed by $E$.  The entry $P_{e'e''}$ specifies the probability that in a random walk $\omega = (\omega_1,\ldots,\omega_p)$, if $\omega_i=e'$, then $\omega_{i+1}=e''$.  To construct $P$, every directed edge $\vec{vw}$ is given a positive weight $z_{vw}$.  If $T_u(v) = \{w_1, \ldots, w_s\}$, then for $1 \leq i \leq s$ set $$P_{\vec{uv},\vec{vw_i}} := \frac{z_{vw_i}}{z_{vw_1}+\ldots+z_{vw_s}}.$$ Otherwise, set $P_{e'e''}:=0$.

Let $x$ be the uniform probability distribution on $E$: $x_{\vec{uv}}=1/|E|$ for all directed edges $\vec{uv}$.  In Claim \ref{Claim1}, we show that the $z_{vw}$ can be chosen so that $x$ is a stable distribution under $P$, i.e. $xP=x$.  Furthermore, in the claim we show that there exists $\alpha_2$, which can be chosen arbitrarily small by choosing $\alpha$ sufficiently small, such that $1-\alpha_2 < z_{vw} < 1+\alpha_2$ for all $vw$.

For a given non-returning walk $\omega = (\vec{v_{-1}v_0},\vec{v_0v_1},\ldots,\vec{v_{k-1}v_k})$ with all edges in $E$, we denote by $p(\omega)$ the probably that $\omega$ is chosen among non-returning random walks of length $k+1$ starting at $\vec{v_{-1}v_0}$.  Since $P_{\vec{v_{i-1}v_i},\vec{v_iv_{i+1}}} \geq \frac{1-\alpha_2}{(1+\alpha_2)t_{v_{i-1}}(v_i)}$, $$p(\omega) \leq \left(\prod_{i=0}^{k-1} (1-\alpha_2)(1+\alpha_2)^{-1}t_{v_{i-1}}(v_i)\right)^{-1}.$$  There exists $\alpha_3 = 1-(1-\alpha_2)(1+\alpha_2)^{-1}(1-3\alpha)$, which can be chosen arbitrarily small by choosing $\alpha$ sufficiently small, such that $$p(\omega) \leq \left(\prod_{i=0}^{k-1} (1-\alpha_3)t(v_i)\right)^{-1}.$$

We repeat the calculations in \cite{MooreGraphs}.  Let $\Omega_{e,l}$ be the set of non-returning random walks of length $l+1$ starting at an edge $e$, and set $N_{e,l} := |\Omega_{e,l}|$.  Define $N_l := \sum_{e}x_eN_{e,l}$.  Using the AMGM inequality, $$N_l = \sum_{e}x_eN_{e,l} = \sum_e \sum_{\omega \in \Omega_{e,l}}x_e\frac{p(\omega)}{p(\omega)} \geq \prod_e \prod_{\omega} p(\omega)^{-x_ep(\omega)}.$$

If $n_{\vec{uv}}(\omega)$ is the number of instances of $\vec{uv}$ in a non-returning walk $\omega$, excluding the starting edge, then $$N_l \geq \prod_{\vec{uv}} (1-\alpha_3)T(v)^{\sum_{e} x_{e} \sum_{\omega \in \Omega_{e,l}}n_{\vec{uv}}(\omega)p(\omega)}.$$  The sum in the exponent is the expected number of visits, excluding the starting edge, to an edge $\vec{uv}$ if the starting edge is chosen randomly with the distribution $x$.  Since $x$ is stable under $P$, that quantity is $l/|E|$.

Then $$N_l \geq \prod_{\vec{uv}}((1-\alpha_3)T(v))^{l/|E|} \geq ((1-\alpha_3)a')^l$$ since $z^z$ is a log-convex function in $z$.  Hence there are, on average, at least $((1-\alpha_3)a')^l$ non-returning walks of length $l+1$ starting at a randomly chosen directed edge $\vec{uv}$.  Thus there exists a directed edge $\vec{uv}$ such that there are at least $((1-\alpha_3)a')^r$ non-returning paths of length $r+1$ starting at $\vec{uv}$, and an undirected edge $u'v'$ such that there are at least $2((1-\alpha_3)a')^r$ non-returning walks of length $r+1$ starting at either $\vec{u'v'}$ or $\vec{v'u'}$.  The theorem follows by Lemma \ref{TreeLemma}, by $a-a' \leq 3 \alpha_1 a$, and by taking $\alpha_1$ and $\alpha_3$ sufficiently small.
\endproof

\begin{claim}
\label{Claim1}
In the proof of Theorem \ref{FixedGirthStrong}, we can choose the $z_{vw}$ so that $xP=x$ and also so that $1-\alpha_2 < z_{vw} < 1+\alpha_2$ for all $\vec{vw} \in E$, where $\alpha_2$ can be chosen arbitrarily small by choosing $\alpha$ sufficiently small.
\end{claim}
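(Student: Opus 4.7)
The plan is to recognize the stationarity condition $xP = x$ (with $x$ uniform) as a Sinkhorn matrix-scaling problem that decouples across vertices $v \in V(\K)$, establish existence via Sinkhorn's theorem, and control the scaling factors using the near-completeness of an auxiliary bipartite graph.

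First, I would rewrite the condition explicitly. At each $\vec{vw} \in E$, $xP = x$ amounts to $z_{vw} \sum_{u \in T_w(v)} 1/S_u(v, z) = 1$, where $S_u(v, z) := \sum_{w' \in T_u(v)} z_{vw'}$ (and $T_w(v) = \{u : \vec{uv} \in E,\, uw \notin \K,\, u \neq w\}$ by the natural symmetry with $T_u(v)$). Introducing $q_{uv} := 1/S_u(v, z)$ for $\vec{uv} \in E$ converts the problem into the two-sided system
\[
q_{uv} \sum_{w' \in T_u(v)} z_{vw'} = 1, \qquad z_{vw} \sum_{u \in T_w(v)} q_{uv} = 1,
\]
which decouples across $v$. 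Fixing $v$ and defining the bipartite graph $B_v$ on two copies of $T(v)$ with $u \sim w$ iff $uw \notin \K$ and $u \neq w$, the subsystem at $v$ is exactly the requirement that the matrix with $(u,w)$-entry $q_{uv} z_{vw} \mathbf{1}[u \sim_{B_v} w]$ be doubly stochastic, a classical Sinkhorn scaling problem.

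For existence I would apply Sinkhorn's theorem. By the construction of $E$, every $\vec{uv} \in E$ satisfies $t_u(v) \geq (1 - 3\alpha) t(v)$, so each vertex of $B_v$ has $B_v$-degree at least $(1 - 3\alpha) t(v)$. For $\alpha$ sufficiently small, Hall's theorem (applied even after deleting any one row and one column) produces perfect matchings through each edge, so the $0$/$1$ adjacency matrix of $B_v$ has total support and Sinkhorn's theorem supplies positive scalings $\{q_{uv}\}, \{z_{vw}\}$ solving the subsystem at $v$. These patch into a global $z$: since $P$ is invariant under rescaling $\{z_{vw}\}_{w \in T(v)}$ by a $v$-dependent constant, the per-$v$ scaling ambiguity of Sinkhorn may be absorbed.

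The quantitative heart is bounding $\mu_z := \max_w z_{vw}/\min_w z_{vw}$ and $\mu_q := \max_u q_{uv}/\min_u q_{uv}$ at each $v$. Writing $Z_v := \sum_{w} z_{vw}$ and $\delta_u := \sum_{w \not\sim_{B_v} u} z_{vw}$, the first equation gives $q_{uv} = 1/(Z_v - \delta_u)$; since $|T(v) \setminus N_{B_v}(u)| \leq 3\alpha t(v) + 1$ and $Z_v \geq t(v)\min_w z_{vw}$, one obtains $\mu_q \leq 1/(1 - 3\alpha \mu_z)$ and symmetrically $\mu_z \leq 1/(1 - 3\alpha \mu_q)$ up to a negligible $O(1/t(v))$ correction (recall $t(v) \geq a/2$ is large). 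Composing yields $3\alpha \mu_z^2 - \mu_z + 1 \geq 0$, which for $\alpha < 1/12$ admits the small branch $\mu_z \leq \mu_s = 1 + O(\alpha)$ and the large branch $\mu_z \geq \mu_L = \Theta(1/\alpha)$. The main obstacle is selecting the small branch; I would do this by observing that Sinkhorn iteration started from $z^{(0)} \equiv 1$ has $\mu_z^{(0)} = 1$, and the composed iterate map $\mu \mapsto f(f(\mu))$ with $f(\mu) = 1/(1-3\alpha\mu)$ satisfies $f(f(1)) < \mu_s$ and preserves $[1, \mu_s]$, so the limit of Sinkhorn iteration lies in this interval. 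A final rescaling of each $\{z_{vw}\}_{w \in T(v)}$ centers the values at $1$, yielding $z_{vw} \in (1 - \alpha_2, 1 + \alpha_2)$ for some $\alpha_2 = O(\alpha)$, which can be made arbitrarily small by shrinking $\alpha$.
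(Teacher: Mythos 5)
Your proposal is correct, and it takes a genuinely different route from the paper's argument. You are right that the stationarity equations $z_{vw}\sum_{u \in T_w(v)} q_{uv} = 1$ with $q_{uv} = 1/\sum_{w' \in T_u(v)} z_{vw'}$ decouple over the central vertex $v$, and that for each $v$ they are exactly the conditions for the $0/1$ matrix $A^{(v)}$ of the non-adjacency bipartite graph $B_v$ on $T(v)\sqcup T(v)$ to be scalable to a doubly stochastic matrix by the diagonals $\operatorname{diag}(q_{uv})$ and $\operatorname{diag}(z_{vw})$. The Hall-type verification of total support works because $t_u(v) \geq (1-3\alpha)t(v)$ for every $\vec{uv}\in E$ (indeed $u\neq w$ is automatic since $\{u\}\in\K$), so after deleting any row and column the minimum degree still exceeds half the side size once $\alpha<1/6$ and $t(v)$ is large (which holds since $t(v)\geq \deg v/3 \geq a/6$). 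The two inequalities $\mu_q \leq 1/(1-3\alpha\mu_z)$ and $\mu_z \leq 1/(1-3\alpha\mu_q)$ follow exactly as you compute, giving $3\alpha\mu_z^2-\mu_z+1\geq 0$; and since the map $h(\mu)=\frac{1-3\alpha\mu}{1-3\alpha\mu-3\alpha}$ is increasing with $h(\mu_s)=\mu_s$ and $h(1)\geq 1$, the Sinkhorn iterates started from $z^{(0)}\equiv 1$ stay in $[1,\mu_s]$, ruling out the large branch. Recentering by the free per-$v$ scalar finishes the argument.

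The paper's proof is instead a bespoke coordinate-descent scheme: it tracks the $\ell^1$ deviation $D=\sum_w|(xP)_{\vec{vw}}-x_{\vec{vw}}|$, repeatedly shrinks the single $z_{vw}$ for which $(xP)_{\vec{vw}}$ is largest, rescales so $\sum_w z_{vw}=t(v)$, and shows $D$ decays geometrically while the $z_{vw}$ stay trapped near $1$ throughout. Your approach is more conceptual and shorter, but it outsources existence and convergence to the Sinkhorn--Knopp theorem (and implicitly uses continuity of the scalings in passing to the limit of the iterates), whereas the paper's argument is fully self-contained. Both obtain the same quantitative control: the paper via its inequalities (\ref{ZVW}) and (\ref{BonxP}), you via the quadratic fixed-point inequality together with branch selection along the iteration starting from the all-ones point.
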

\begin{proof}
Fix a vertex $v$, and let $z$ be a vector indexed by the directed edges of $E$ with starting vertex $v$.  Initially choose each $z_{vw} = 1$ and define $P = P(z)$ in terms of the $z_{vw}$ as above.  Let $P^*$ be the value of $P$ for the initial values of $z_{vw}$.  Also define $$D = D(z) := \sum_{w \in T(v)} |(xP(z))_{\vec{vw}}-x_{\vec{vw}}|.$$ Let $D^*$ be the initial value of $D$.  $D$ is a measure of how far $x$ is from being stable under $P$ around $v$.  We calculate 
\begin{equation}
\label{xpcalc}
(xP)_{\vec{vw}} = \sum_{u \in T_w(v)}x_{\vec{uv}}\frac{z_{vw}}{\sum_{w' \in T_u(v)}z_{vw'}}.
\end{equation}
Initially, $$\frac{1-3\alpha}{|E|} \leq \frac{t_w(v)}{|E|t(v)} \leq (xP^*)_{\vec{vw}} = \sum_{u \in T_w(v)}x_{\vec{uv}}\frac{1}{\sum_{w' \in T_u(v)}1} \leq \frac{(1-3\alpha)^{-1}}{|E|}.$$  Then there exists $\alpha_5 = (1-3\alpha)^{-1}-1$, which can be chosen arbitrarily small if $\alpha$ is chosen sufficiently small, such that $D^* < \alpha_5t(v)/|E|$.  Throughout the following construction, the value of $D$ always decreases, $(xP)_{\vec{vw}}$ never decreases unless to a value that is at least $1/|E|$, and $\sum_{w \in T(v)} z_{vw} = t(v)$.  Since
\begin{equation}
\label{ZVW}
\sum_{u \in T_w(v)}\frac{z_{vw}}{\sum_{w' \in T_u(v)}z_{vw'}} \geq \sum_{u \in T_w(v)}\frac{z_{vw}}{t(v)}  = t_w(v) \frac{z_{vw}}{t(v)} \geq (1-3\alpha)z_{vw},
\end{equation}
we have $z_{vw} \leq (1-3\alpha)^{-1}|E|(xP)_{\vec{vw}}$ and
\begin{equation}
\label{NotMuchError}
\sum_{w: z_{vw} \geq (1-3\alpha)^{-1}} (z_{vw}-(1-3\alpha)^{-1}) \leq (1-3\alpha)^{-1}D|E| \leq \alpha_6 t(v),
\end{equation}
where $\alpha_6 = (1-3\alpha)^{-1}\alpha_5$ can be chosen arbitrarily small by choosing $\alpha$ sufficiently small.  Furthermore,
\begin{equation}
\label{BonxP}
(xP)_{\vec{vw}} = \sum_{u \in T_w(v)}x_{\vec{uv}}\frac{z_{vw}}{\sum_{w' \in T_u(v)}z_{vw'}} = \sum_{u \in T_w(v)}x_{\vec{uv}}\frac{z_{vw}}{t(v) - \sum_{w' \not\in T_u(v)}z_{vw'}} \leq
\end{equation}
$$ \sum_{u \in T_w(v)}x_{\vec{uv}}\frac{z_{vw}}{t(v) - \alpha_6 t(v) - (1-3\alpha)^{-1}(t(v)-t_w(v))} \leq z_{vw}\frac{1+\alpha_7}{|E|},$$
where $\alpha_7$ can be chosen arbitrarily small by choosing $\alpha$ sufficiently small.  The second to last inequality follows from (\ref{NotMuchError}), and the fact that $t_w(v) \geq (1-3\alpha)t(v)$ allows us to choose $\alpha_7$ small.

Choose an edge $\vec{vw}$ so that $(xP)_{\vec{vw}}$ is maximal, say $1/|E|+b$.  Let $z'_{vw}$ be the larger of the following two values: (Case 1) $z_{vw}-D|E|/(2t(v))$, or (Case 2) the value necessary so that if we replace $z$ by $z'$ by replacing ${z_{vw}}$ by ${z'_{vw}}$, then $P' := P(z')$ satisfies $(xP')_{\vec{vw}} = 1/|E|$.  Update $z$ by replacing $z_{vw}$ with $z'_{vw}$ in $z$.  Since $(xP)_{\vec{vw}}$ is maximal, $b \geq {D}/(2t(v))$.  In Case 2, since $(xP)_{\vec{vw}}/(xP')_{\vec{vw}} = 1+b|E|$, it follows from (\ref{xpcalc}) that $z_{vw}/z'_{vw} \geq 1+b|E|$.  From $(xP)_{\vec{vw}} \geq 1/|E|$ and (\ref{BonxP}) we have that $z_{vw} \geq (1+\alpha_7)^{-1}$ and hence in Case 2 $z_{vw}-z'_{vw} \geq (1-\alpha_8)b|E|$, where $\alpha_8$ can be chosen arbitrarily small for $\alpha$ sufficiently small.  In either Case 1 or Case 2, $z_{vw}-z'_{vw} \geq (1-\alpha_8)D|E|/(2t(v))$.  For $w' \in T(v)$, we calculate
$$(xP')_{\vec{vw'}}-(xP)_{\vec{vw'}} = \frac{z_{vw'}}{|E|}\left(\sum_{u \in T_{w'}(v)}\frac{1}{\sum_{\tilde{w} \in T_u(v)}z'_{v\tilde{w}}} - \sum_{u \in T_{w'}(v)}\frac{1}{\sum_{\tilde{w} \in T_u(v)}z_{v\tilde{w}}}\right)= $$
$$\frac{z_{vw'}}{|E|}\sum_{u \in T_{w'}(v) \cap T_w(v)}\left( \frac{1}{\sum_{\tilde{w} \in T_u(v)}z'_{v\tilde{w}}} - \frac{1}{\sum_{\tilde{w} \in T_u(v)}z_{v\tilde{w}}} \right) \geq $$
$$\frac{z_{vw'}}{|E|}(1-6\alpha)t(v)\left(\frac{1}{t(v)-(1-\alpha_8)D|E|/(2t(v))}-\frac{1}{t(v)} \right). $$
The second equality follows from the fact that the two fractions are equal if $u \not\in t_w(v)$.  Since $(xP)_{\vec{vw'}} \geq (1-3\alpha)/|E|$ (by the fact that $(xP^*)_{\vec{vw'}} \geq (1-3\alpha)/|E|$ and $(xP)_{\vec{vw'}}$ does not decrease to below $(1-3\alpha)/|E|$) and $z_{vw'} \geq |E|(xP)_{\vec{vw'}}(1+\alpha_7)^{-1} \geq (1-3\alpha)(1+\alpha_7)^{-1}$, we have
$$(xP')_{\vec{vw'}}-(xP)_{\vec{vw'}} \geq \frac{(1-\alpha_9)D}{2t(v)^2},$$
where $\alpha_9$ can be chosen arbitrary small for $\alpha$ sufficiently small.  There exists a $w'$ with $(xP)_{\vec{vw'}} \leq x_{\vec{vw'}} - {D}/(2t(v))$, and so $D$ decreases by at least $(1-\alpha_9)\frac{D}{2t(v)^2}$ under the replacement of $z$ with $z'$  This follows from the fact that all the $(xP)_{\vec{vw'}}$ increase except for $(xP)_{\vec{vw}}$, which decreases to a value at least $x_{vw}$.  After replacing $z_{vw}$ by $z'_{vw}$, rescale the $z_{vw'}$ so that $\sum_{w' \in T(v)} z_{vw'} = t(v)$.

By repeating this process, $D \rightarrow 0$ exponentially.  It follows that the change in all the $z_{vw}$ in one iteration of the above process also decreases exponentially, and therefore the $z_{vw}$ converge to values for which $xP=x$.  Furthermore, each $1-\alpha_2 < z_{vw} < 1+\alpha_2$ for each $z_{vw}$ by (\ref{ZVW}) and (\ref{BonxP}), where $\alpha_2$ can be chosen arbitrarily small by choosing $\alpha$ sufficiently small.
\end{proof}

Our proof depends on the assumption that the average degree of a vertex in $\K$ is sufficiently large relative to $d$, and that is the reason for the $r/\log (n/d) < \delta$ hypothesis.

In some cases, an improvement to Theorem \ref{FixedGirthStrong} is possible.  For example, consider the case that $d=3$ and $r=3$.  Let $u,v \in V(\K)$, and consider paths from $u$ to $v$ of the form $(u,u',v',v)$.  By the argument of Lemma \ref{TreeLemma}, $v'$ could take on at most two values, say $v_1$ and $v_2$, and that can happen only if there is an edge $v_1v_2$.  Similarly, by considering paths from $v$ to $u$, $u'$ can only take on at most two values, say $u_1$ and $u_2$, and that can happen only if there is an edge $u_1u_2$.  Since $\K$ is flag, if all four edges $u_1v_1, u_1v_2, u_2v_1, u_2v_2$ are in $\K$, then $\K$ has a $3$-face $u_1u_2v_1v_2$, a contradiction to $d=3$.  Hence there are at most $3$ paths of length $3$ from $u$ to $v$.  Following the proof of Theorem \ref{FixedGirthStrong}, we conclude that for all $\epsilon > 0$, $f_1(\K) \leq (2^{-1}+\epsilon)3^{1/3}n^{4/3}$ for sufficiently large $n$.

\section{One-girth and higher face numbers}
\label{OneGirthLargeI}

Next we prove an analogue to Theorem \ref{FixedGirthStrong} for higher face numbers.  The following result bounds higher face numbers when the $1$-girth is given.

\begin{theorem}
\label{HighDim1Girth}
Let $\K$ be a $(d-1)$-dimensional simplicial complex with $n$ vertices, and suppose $\g_1(\K) > 2r$, $r \geq 2$.  Then for some constant $C_{r,i}$ that depends only on $r$ and $i$, $$f_i(\K) \leq C_{r,i}d^{i-1/r-1/r^2- \ldots - 1/r^i}n^{1+1/r+1/r^2+ \ldots + 1/r^i}.$$
\end{theorem}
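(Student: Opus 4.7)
The plan is to induct on $i$, with Theorem \ref{FixedGirthStrong} as the base case $i=1$. For the inductive step I would start from the incidence identity $(i+1)f_i(\K) = \sum_{v \in V(\K)} f_{i-1}(\lk_\K(v))$, which counts $(i+1,1)$-incidences between $i$-faces and their vertices. Writing $S_j := 1/r + 1/r^2 + \cdots + 1/r^j$, so that the target bound reads $f_i(\K) \leq C_{r,i}\,d^{\,i - S_i}\,n^{\,1 + S_i}$, each link $\lk_\K(v)$ has dimension at most $d-2$, exactly $\deg(v)$ vertices, and satisfies $\g_1(\lk_\K(v)) \geq \g_1(\K) > 2r$ by part 2 of Lemma \ref{GirthProperties}. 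The inductive hypothesis then yields $f_{i-1}(\lk_\K(v)) \leq C_{r,i-1}\,d^{\,(i-1) - S_{i-1}}\,\deg(v)^{\,1 + S_{i-1}}$, reducing the problem to bounding $\sum_v \deg(v)^{1+S_{i-1}}$.

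For this sum I would apply Theorem \ref{FixedGirthStrong} in its $i=1$ form to control $\sum_v \deg(v) = 2f_1(\K) \leq C\,d^{1-1/r}\,n^{1+1/r}$. The naive estimate $\sum_v \deg(v)^{1+S_{i-1}} \leq n^{S_{i-1}} \sum_v \deg(v)$, using $\deg(v) \leq n$, falls short of the target by a factor of $(n/d)^{1/r - 1/r^i}$: we need the recursion to increment the $d$-exponent by $1 - 1/r^i$ rather than the naive $1 - 1/r$. To recover this slack I would partition $V(\K)$ by degree at a carefully chosen threshold $M$: vertices with $\deg(v) > M$ number at most $2f_1(\K)/M$ and, when $d$ is small relative to $n$, are further constrained by Lemma \ref{HighDegCor}, while vertices with $\deg(v) \leq M$ contribute at most $n M^{1+S_{i-1}}$.

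The main obstacle is to identify a threshold $M$, or, if one threshold is insufficient, a dyadic or multi-scale partition, so that the high- and low-degree contributions together add up to $C_{r,i}\,d^{\,i-S_i}\,n^{\,1+S_i}$. The required improvement must come precisely from exploiting that most vertices have moderate degree, a property that is quantitative thanks to the Moore-type bound of Theorem \ref{FixedGirthStrong}; the final constant $C_{r,i}$ will absorb the combinatorial factors introduced by the partition at each level of the induction.
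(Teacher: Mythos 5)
Your plan matches the paper's approach at the structural level: you correctly identify the induction on $i$, the incidence identity $(i+1)f_i(\K)=\sum_v f_{i-1}(\lk_\K(v))$, the use of part 2 of Lemma~\ref{GirthProperties}, and the reduction of the inductive step to bounding $\sum_v (\deg v)^p$ with $p = 1 + S_{i-1}$. You also correctly compute that the target requires $\sum_v (\deg v)^p \lesssim d^{p-p/r}n^{1+p/r}$, and you even anticipate the dyadic partition by degree. The paper packages exactly this sub-problem as Lemma~\ref{PowerDegree}.

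The gap is in the tail bound on the degree distribution, and it is not a minor gap: the tools you name are quantitatively too weak. Markov's inequality applied to $\sum_v \deg v = 2f_1(\K) \lesssim d^{1-1/r}n^{1+1/r}$ gives at most $Cn/R$ vertices with degree exceeding $R\,d^{1-1/r}n^{1/r}$, and Lemma~\ref{HighDegCor} only controls the count of vertices whose degree is a constant fraction of $n$. With the $1/R$ tail, the dyadic sum over degree scales $[M,2M]$ with $M = R\,d^{1-1/r}n^{1/r}$ contributes $\approx nR^{p-1}d^{p-p/r}n^{p/r}$ per scale, and since $p>1$ this sum is dominated by the largest $R\approx d^{-1+1/r}n^{1-1/r}$, giving an estimate that overshoots the target by a factor $(n/d)^{(p-1)(1-1/r)}$. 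What is actually needed, and what the paper's Lemma~\ref{PowerDegree} proves, is a strictly polynomial tail: at most $C_1 n R^{-r/(r-1)}$ vertices have degree at least $R\,d^{1-1/r}n^{1/r}$. Since $p = 1+S_{i-1} < r/(r-1)$, this makes the dyadic sum converge to the right order. Establishing that tail requires a fresh argument, not a corollary of Theorem~\ref{FixedGirthStrong}: the paper constructs a bipartite auxiliary graph between the set $X$ of high-degree vertices and $V(\K)$, prunes directed edges as in the $f_1$ proof, and runs a non-returning walk count against Lemma~\ref{TreeLemma} to show that $|X|$ cannot be too large. You flagged the uncertainty yourself (``the main obstacle is to identify a threshold $M$\ldots''), so you diagnosed the right bottleneck, but without an estimate of the strength of Lemma~\ref{PowerDegree} the induction does not close.
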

\begin{proof}
We use induction on $i$, with the case that $i=0$ trivial.  By Part 2 of Lemma \ref{GirthProperties} and the inductive hypothesis, for a vertex $v \in V(\K)$, $$f_{i-1}(\lk_\K(v)) \leq C_{r,i-1}d^{i-1-1/r-1/r^2- \ldots - 1/r^{i-1}}(\deg v)^{1+1/r+1/r^2+ \ldots + 1/r^{i-1}}.$$  Then, since $(i+1)f_i(\K) = \sum_{v \in V(\K)} f_{i-1}(\lk_\K(v))$, $$f_i(\K) \leq \frac{1}{i+1}C_{r,i-1} \sum_{v \in V(\K)} d^{i-1-1/r-1/r^2- \ldots - 1/r^{i-1}}(\deg v)^{1+1/r+1/r^2+ \ldots + 1/r^{i-1}}.$$  The theorem follows by Lemma \ref{PowerDegree} with $p = 1+1/r+1/r^2+ \ldots + 1/r^{i-1}$.
\end{proof}

\begin{lemma}
\label{PowerDegree}
Let $\K$ be a $(d-1)$-dimensional simplicial complex with $n$ vertices and $\g_1(\K) > 2r$.  Then for a fixed $p<\frac{r}{r-1}$, there exists a constant $C$, which depends only on $r$ and $p$, such that $$\sum_{v \in V(\K)} (\deg v)^p \leq C d^{p-p/r}n^{1+p/r}.$$
\end{lemma}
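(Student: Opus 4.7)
The plan is to bound $\sum_v (\deg v)^p$ by combining Theorem \ref{FixedGirthStrong} applied to $\K$ itself with its application to induced subcomplexes $\K[V_t]$ of high-degree vertices, where $V_t := \{v \in V(\K) : \deg v \geq t\}$. By Lemma \ref{GirthProperties}, $\g_1(\K[V_t]) \geq \g_1(\K) > 2r$, so $f_1(\K[V_t]) \leq C\, d^{1-1/r}\, |V_t|^{1+1/r}$ holds for every $t$.

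First I would write the layer-cake identity
$$\sum_v (\deg v)^p = p\int_0^{\infty} t^{p-1} |V_t|\, dt$$
and record the elementary bounds $|V_t| \leq n$ and, from Theorem \ref{FixedGirthStrong} applied to $\K$, $t\, |V_t| \leq \sum_v \deg v \leq 2C\, d^{1-1/r}\, n^{1+1/r}$. Splitting the integral at the Moore threshold $t_0$ of order $d^{1-1/r}\, n^{1/r}$ produces a bulk contribution of order $n\, t_0^p$, which is already of the target order $d^{p-p/r}\, n^{1+p/r}$, plus a tail contribution coming from the Markov-type estimate that overshoots by a factor of $(n/d)^{(p-1)(1-1/r)}$. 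The hypothesis $p < r/(r-1)$ is equivalent to $(p-1)(1-1/r) < 1/r$, so this overshoot is at worst $(n/d)^{1/r}$.

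To close the remaining gap, I would sharpen the bound on $|V_t|$ in the tail by invoking the Moore estimate on $\K[V_t]$. Let $E(V_t, V_t^c)$ denote the number of edges of $\K$ with exactly one endpoint in $V_t$. Then $\sum_{v \in V_t}\deg v = 2 f_1(\K[V_t]) + E(V_t, V_t^c)$, and using $E(V_t, V_t^c) \leq \sum_{u \in V_t^c}\deg u < t(n-|V_t|)$ yields the self-consistent inequality
$$(2\, |V_t| - n)\, t \leq 2C\, d^{1-1/r}\, |V_t|^{1+1/r},$$
which strengthens the Markov estimate whenever $|V_t| > n/2$. Iterating this, or substituting the resulting refined decay of $|V_t|$ back into the layer-cake integral, is intended to reduce the tail contribution to the target order.

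The main obstacle is pushing this refinement far enough to eliminate completely the excess factor of $(n/d)^{1/r}$ left over by the naive Markov tail, while keeping the bound clean in $n$ and $d$ separately. The condition $p < r/(r-1)$ is expected to be used sharply here: at $p = r/(r-1)$ the Markov overshoot $(n/d)^{(p-1)(1-1/r)}$ saturates the slack $(n/d)^{1/r}$ exactly, so any admissible constant in the conclusion must depend on $r$ and $p$ and diverge as $p \to r/(r-1)$, consistent with the hypothesis of the lemma.
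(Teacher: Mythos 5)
Your layer-cake setup correctly isolates the problem: the Markov tail $|V_t| \lesssim d^{1-1/r}n^{1+1/r}/t$ overshoots the target by a factor $(n/d)^{(p-1)(1-1/r)}$, and the hypothesis $p < r/(r-1)$ is exactly the slack available. But the refinement you propose does not close that gap, and I do not think it can be made to. The inequality
\[
(2|V_t|-n)\,t \;\leq\; 2\,f_1(\K[V_t]) \;\leq\; 2C\,d^{1-1/r}|V_t|^{1+1/r}
\]
is vacuous whenever $|V_t| \leq n/2$, which is precisely the tail regime $t \gtrsim d^{1-1/r}n^{1/r}$ where you need the improvement. The obstruction is structural: a vertex $v$ of degree $\geq t$ may send almost all its edges to $V_t^c$, so $f_1(\K[V_t])$ (equivalently, the Moore bound applied to $\K[V_t]$) contains almost no information about $\deg_\K v$. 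No amount of iterating the self-consistent inequality, or feeding a refined $|V_t|$ back into the layer-cake integral, escapes this, because after the first threshold the sets $V_t$ are small and the inequality simply gives $0 \leq \mathrm{(something\ positive)}$.

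What is missing is a bound on $|V_t|$ that decays like a power $>1$. The paper proves exactly this: for $R = t / (d^{1-1/r}n^{1/r})$, one has $|V_t| \leq C_1\, n\, R^{-r/(r-1)}$. With decay $t^{-r/(r-1)}$ rather than $t^{-1}$, the tail integral $\int t^{p-1}|V_t|\,dt$ converges precisely when $p < r/(r-1)$, which is what the lemma asserts. To obtain that decay the paper does not invoke Theorem \ref{FixedGirthStrong} on $\K[V_t]$; instead it builds a directed bipartite graph between the high-degree set $X = V_t$ and all of $Y = V(\K)$, prunes a small set of bad edges, and counts non-returning walks of length $r$ alternating between $X$ and $Y$. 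Crucially these walks use edges leaving $X$, so the large degrees of vertices in $X$ enter the count directly; the resulting number of walks exceeds $n \cdot (d-1)^{r-1}$, contradicting Lemma \ref{TreeLemma}. You would need to replace your induced-subcomplex step with something of this alternating-walk flavor --- the Moore bound on $\K[V_t]$ alone is too weak.
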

\begin{proof}
For a fixed $\alpha > 0$, we may assume that $d < \alpha n$ by choosing $C$ sufficiently large.  First we show that there exists a constant $C_1$, which depends only on $r$, such that for each $1 \leq R \leq d^{-1+1/r}n^{1-1/r}$, $\K$ does not contain more than $C_1nR^{\frac{-r}{r-1}}$ vertices with degree at least $Rd^{1-1/r}n^{1/r}$.  Suppose by way of contradiction that, for $C_1$ arbitrarily large in terms of $r$, there are $C_1nR^{\frac{-r}{r-1}}$ vertices with degree at least $Rd^{1-1/r}n^{1/r}$ and call this set of vertices $X$.

We consider non-returning walks on $\vec{\Skel}_1(\K)$.  Construct a directed bipartite graph $G(V,E)$ with $V(G) = X \sqcup Y$, $Y := V(\K)$.  We say that there are directed edges $\vec{xy}$ and $\vec{yx}$ in $G$ joining $x \in X$ and $y \in Y$ if $xy$ is an edge in $\K$.  There is a bijection between walks $(\vec{v_{-1}v_0}, \ldots, \vec{v_{k-1}v_k})$ in $\K$ with $v_0,v_2, \ldots \in X$, and walks in $G$ with the endpoint of the initial directed edge in $X$.  The average out-degree in $G$ of the vertices of $Y$ is at least $C_1d^{1-1/r}n^{1/r}R^{\frac{-1}{r-1}}$.

We define a set of edges in $G$ that are allowed in our walks.  First define $U'$ to be the set of all edges $\vec{uv}$ or $\vec{vu}$ with $u \in X, v \in Y$ that satisfy at least one of the following three conditions:

1) $f_0(\lk_\K(uv)) \geq (1/6) |V(\lk_\K(u))|$,

2) $|V(\lk_\K(uv)) \cap X| \geq (1/6) |V(\lk_\K(v)) \cap X|$,

3) $v$ has out-degree less than $\frac{C_1}{4} d^{1-1/r}n^{1/r}R^{\frac{-1}{r-1}}$. \newline
Suppose that $C_1$ is chosen sufficiently large and $\alpha$ sufficiently small (both independently of $d$ or $n$).  For a fixed $u \in X$, we have that there are at most $(1/40) |V(\lk_\K(u))|$ edges incident to $u$ in the first category by applying Lemma \ref{HighDegLemma} to $\lk_\K(u)$, and so there are at most $(1/40)|E|$ edges in the first category.  Likewise, consider a fixed $v \in Y$.  There are at most $(1/40) |V(\lk_\K(v)) \cap X|$ edges incident to $v$ in the second category but not the third by applying Lemma \ref{HighDegLemma} to $\lk_\K(v) \cap \K[X]$, and so there are at most $(1/40)|E|$ edges in the second category but not the third.  Also, there are fewer than $(1/4)|E|$ directed edges in the third category.  Then $|U'| < \frac{1.2}{4} |E|$.

Now set $U := U'$.  If there is a vertex $v$ in either side of $G$ such that $(2/3) \deg v$ of the directed edges incident to $v$ are in $U$, then add all edges incident to $v$ to $U$.  Repeat until no more edges can be added in this manner.  It follows from the same argument as in the proof of Theorem \ref{FixedGirthStrong} that $|U| < \frac{3.6}{4} |E|$.  Furthermore, for every $x \in X$, there are either $0$ or at least $Rd^{1-1/r}n^{1/r}/3$ directed edges in $E-U$ that start at $x$, and for every $y \in Y$ there are either $0$ or at least $\frac{C_1}{12} d^{1-1/r}n^{1/r}R^{\frac{-1}{r-1}}$ directed edges in $E-U$ that start at $y$.  For a vertex $v \in V(G)$, let $T(v)$ denote the number of directed edges in $E-U$ that start at $v$.  By construction, this is the same as the number of directed edges in $E-U$ that end at $v$.

Consider walks of length $r$ in $G$, starting with an edge $v_{-1}v_0$ with $v_0 \in X$ and using edges in $E-U$, such that the corresponding path in $\K$ is a non-returning walk.  For every edge $\vec{uv}$ or $\vec{vu}$ in such a path with $u \in X, v \in Y$ we have $f_0(\lk_\K(uv)) < (1/6)f_0(\lk_\K(u)) \leq (1/2) T(u)$ and $|V(\lk_\K(uv)) \cap X| < (1/6)|V(\lk_\K(v)) \cap X| \leq (1/2) T(v)$.  Then for some value $C_2$ that can be chosen arbitrary large by choosing $C_1$ sufficiently large, there are at least $$\left(\frac{1}{6}Rd^{1-1/r}n^{1/r}\right)^{\lceil \frac{r}{2} \rceil}\left(\frac{C_1}{24}R^{\frac{-1}{r-1}}d^{1-1/r}n^{1/r}\right)^{\lfloor \frac{r}{2} \rfloor} = C_2nR^{\lceil \frac{r}{2} \rceil-\frac{\lfloor \frac{r}{2} \rfloor}{r-1}}d^{r-1} \geq C_2nd^{r-1}$$ such paths.  It follows from Lemma \ref{TreeLemma} that $\K$ has more than $n$ vertices for $C_2 > 1$, a contradiction.

Let $W_R$ be the set of vertices with degree between $Rd^{1-1/r}n^{1/r}$ and $2Rd^{1-1/r}n^{1/r}$.  We have shown that $|W| \leq C_1nR^{\frac{-r}{r-1}}$ for some $C_1$ that depends only on $r$.  Then $$\sum_{v \in W_R} (\deg v)^p \leq C_3d^{p-p/r}n^{1+p/r}R^{p-\frac{r}{r-1}}$$ for some constant $C_3$.  By adding over all $R=1,2,4,\ldots,2^{\lfloor \log_2 d^{-1+1/r}n^{1-1/r} \rfloor}$ and by $p < \frac{r}{r-1}$, it follows that $\sum_{v \in V(\K)} (\deg v)^p \leq Cd^{p-p/r}n^{1+p/r}$ for some constant $C$ that depends only on $p$ and $r$.
\end{proof}

The following conjecture, which is reasonable in light of Theorem \ref{FixedGirthStrong}, is a possible strengthening of Theorem \ref{HighDim1Girth}.

\begin{conjecture}
Let $\K$ be a $(d-1)$-dimensional simplicial complex with $n$ vertices, and suppose that $\g_1(\K) > 2r$, $r \geq 2$.  For every $\epsilon > 0$, there exists $\delta > 0$ such that if $r/\log(n/d) < \delta$, then $$f_i(\K) \leq \left(\frac{1}{(i+1)!}+\epsilon\right)(d-1)^{1-{1}/{r^i}}(d-2)^{1-{1}/{r^{i-1}}}\ldots (d-i)^{1-{1}/{r}}n^{1+{1}/{r}+\ldots+{1}/{r^i}}.$$
\end{conjecture}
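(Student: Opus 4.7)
The plan is induction on $i$. The base case $i = 1$ is Theorem \ref{FixedGirthStrong}, whose constant $\tfrac{1}{2} = \tfrac{1}{2!}$ matches the claimed $\tfrac{1}{(i+1)!}$. For the inductive step, I use the identity $(i+1)f_i(\K) = \sum_{v \in V(\K)} f_{i-1}(\lk_\K(v))$. Each link $\lk_\K(v)$ has dimension $d-2$, exactly $\deg v$ vertices, and $\g_1 \geq 2r$ by Part~2 of Lemma \ref{GirthProperties}. At any vertex $v$ for which the logarithmic hypothesis holds for $\lk_\K(v)$, the inductive hypothesis (with $d$ replaced by $d-1$ and $i$ by $i-1$) gives
\[
f_{i-1}(\lk_\K(v)) \leq \Bigl(\tfrac{1}{i!}+\epsilon'\Bigr)\prod_{j=2}^{i}(d-j)^{1 - 1/r^{\,i+1-j}}\,(\deg v)^{p}, \qquad p := 1 + \tfrac{1}{r} + \cdots + \tfrac{1}{r^{\,i-1}}.
\]
Vertices $v$ with $\deg v$ too small for the logarithmic hypothesis satisfy $\deg v = O(d)$, and the assumption $r/\log(n/d) < \delta$ forces their aggregate contribution to be negligible against the target. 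Summing and identifying the product of $(d-j)$-factors with the one in the conjecture, the inductive step reduces to the sharp moment bound
\[
\sum_{v \in V(\K)} (\deg v)^p \leq (1+\epsilon'')\,(d-1)^{1-1/r^{\,i}}\,n^{1 + 1/r + \cdots + 1/r^{\,i}}, \qquad (\dagger)
\]
where I have used the exponent identities $p(1 - 1/r) = 1 - 1/r^i$ and $1 + p/r = 1 + 1/r + \cdots + 1/r^i$.

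The heuristic driving $(\dagger)$ is that Theorem \ref{FixedGirthStrong} already yields the sharp average degree bound $\bar a := 2f_1(\K)/n \leq (1+\epsilon)(d-1)^{1-1/r}n^{1/r}$, and if the degrees were uniform at $\bar a$ then $\sum (\deg v)^p = n\bar a^p$ would reproduce $(\dagger)$ exactly. Accordingly, I split $V(\K) = V_{\mathrm{low}} \sqcup V_{\mathrm{high}}$ at the threshold $(1+\epsilon)a^*$ with $a^* := (d-1)^{1-1/r}n^{1/r}$. On $V_{\mathrm{low}}$ the elementary bound $(\deg v)^p \leq ((1+\epsilon)a^*)^{p-1}\,\deg v$ combined with Theorem \ref{FixedGirthStrong} applied to $\sum_v \deg v = 2f_1(\K)$ delivers the right side of $(\dagger)$ with the correct sharp constant. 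On $V_{\mathrm{high}}$ I partition into dyadic bands $V_R := \{v : Ra^* \leq \deg v < 2Ra^*\}$ for $R \geq 1+\epsilon$ and must establish $|V_R| \leq \eta(R)\,n\,R^{-r/(r-1)}$ with $\eta(R) \to 0$ as $R \to \infty$, so that the geometric sum $\sum_R |V_R|(2Ra^*)^p$ is an $o(1)$ correction to $(\dagger)$.

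The main obstacle is this sharp tail estimate on $|V_R|$. The non-returning walk argument embedded in the proof of Lemma \ref{PowerDegree} yields only $|V_R| \leq C_1\,n\,R^{-r/(r-1)}$ with a fixed constant $C_1$, and this lost factor is exactly what separates Theorem \ref{HighDim1Girth} from the conjectured sharp coefficient $1/(i+1)!$. To upgrade $C_1$ to a vanishing $\eta(R)$, I would localize the non-returning random walk machinery of Theorem \ref{FixedGirthStrong} and Claim \ref{Claim1} to walks anchored in $V_R$, treating $V_R$ as a bipartite \emph{hub} side, mirroring the edge-classification and the near-stable distribution construction of Claim \ref{Claim1}, and invoking a refined version of Lemma \ref{TreeLemma} to extract a quantitative contradiction to $\g_1(\K) > 2r$ whenever $|V_R|$ is too large. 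Porting Claim \ref{Claim1}'s stable-distribution construction uniformly across an $R$-indexed family of subsystems, while tracking constants sharply enough that $\eta(R) \to 0$, is plausibly the crux of the argument; that it is nontrivial is consistent with the author posing this as a conjecture rather than a theorem.
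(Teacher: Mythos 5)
This statement is posed in the paper as Conjecture (the paper offers no proof), so there is nothing to compare against; the author explicitly presents it as an unproven strengthening of Theorem~\ref{HighDim1Girth}. Your proposal is therefore best read as an analysis of what a proof would require, and on those terms it is sound and honest about where it stops. The inductive skeleton is correct: the identity $(i+1)f_i = \sum_v f_{i-1}(\lk_\K(v))$, Part~2 of Lemma~\ref{GirthProperties} preserving $\g_1$, the exponent telescopes $p(1-1/r)=1-1/r^i$ and $1+p/r = 1+1/r+\cdots+1/r^i$, and the reduction to the sharp moment bound $(\dagger)$ all check out and mirror the way the paper reduces Theorem~\ref{HighDim1Girth} to Lemma~\ref{PowerDegree}.

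The genuine gap is precisely the one you name, and it should be stated clearly: Lemma~\ref{PowerDegree} delivers $\sum_v(\deg v)^p \leq C\,d^{p-p/r}n^{1+p/r}$ with an unquantified constant $C$, and that constant compounds multiplicatively through the induction, which is why Theorem~\ref{HighDim1Girth} ends with an unspecified $C_{r,i}$ rather than $1/(i+1)!$. Your tail decomposition into dyadic bands $V_R$ and the claim that the geometric sum closes \emph{provided} $|V_R|\leq \eta(R)\,n\,R^{-r/(r-1)}$ with $\eta(R)\to 0$ is the right target, but the paper's own walk-counting argument in Lemma~\ref{PowerDegree} produces only a fixed $C_1$, not a decaying $\eta$, and no mechanism in the paper (including Claim~\ref{Claim1}'s stable-distribution construction, which is tuned to yield the sharp average-degree bound over the whole complex, not a sharp degree-tail estimate) bridges that. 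A secondary loose end worth flagging: in your low-degree regime you would need to verify quantitatively that vertices whose links fail the logarithmic hypothesis contribute negligibly; this depends on the implicit threshold in $\delta'$, which varies with $\epsilon'$, and should be checked rather than asserted. None of this undermines your write-up as a gap analysis — it correctly locates the missing ingredient and correctly explains why the author left this as a conjecture — but it is not a proof, and you are right to say so.
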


While the theorems in this section and in Section \ref{OneGirth} only apply when $\g_1(\K) \geq 5$, the case that $\g_1(\K) = 4$ is fully addressed by earlier results.  Given that $\g_1(\K) \geq 4$ (i.e. $\K$ is flag), and that $\K$ has dimension $d-1$ and $n$ vertices, then all face numbers are simultaneously maximized by the following construction.  Partition $V(\K)$ into $d$ sets $V_1, \ldots, V_d$ as evenly as possible, and let all vertex subsets that consist of at most one element from each of the $V_i$ be faces of $\K$.  This result is proven in \cite{Flag}.

\section{Higher girths}
\label{HighGirth}
Next we turn our attention to bounds on face numbers that arise from higher girth assumptions.  In this section we conjecture an upper bound on $f_{i-1}(\K)$ when the dimension, number of vertices, and $(p-1)$-girth of $\K$ are given.

We define the exponents used in the following conjecture recursively.  Define
\begin{eqnarray*}
& a_{2,r,i} & := \frac{r^i-1}{r^i-r^{i-1}} = 1 + 1/r + \ldots + 1/r^{i-1}, \\
& a_{p,r,p-1} & := p-1, \\
& a_{p,r,i} & := \frac{1}{2} a_{p-1,r,i-1} + \frac{1}{2} a_{p,r,i-1} + 1, \quad p \geq 3, i \geq p.
\end{eqnarray*}
 We note some properties of the $a$ values.  For all $p \geq 3, r \geq 2, i \geq p$, we have $a_{p,r,i} > a_{p-1,r,i}, a_{p,r,i} > a_{p,r,i-1}$, and $a_{p,r,i} > a_{p,r+1,i}$.  Also, $a_{p,r,i} < 2p-3+\frac{1}{r-1}$, and $\lim_{i \rightarrow \infty} a_{p,r,i} = 2p-3+\frac{1}{r-1}$.  Each of these properties can be checked by using using induction on $p$ and $i$.

\begin{conjecture}
\label{GenConjecture}
Let $\K$ be a $(d-1)$-dimensional simplicial complex with $n$ vertices, and suppose that $\g_{p-1}(\K) > 2p+2r-4$, for some $p,r \geq 2$.  Then for a constant $C_{p,r,i}$ that depends only on $p,r,i$, $$f_{i-1}(\K) \leq C_{p,r,i}d^{i-a_{p,r,i}}n^{a_{p,r,i}}.$$
\end{conjecture}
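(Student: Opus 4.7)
The plan is a double induction: an outer induction on $p$ and an inner induction on $i \geq p-1$. The base case $p=2$ is exactly Theorem \ref{HighDim1Girth} after checking that $a_{2,r,i}=1+1/r+\cdots+1/r^{i-1}$ matches the exponent of $n$ appearing there once indices are aligned. The base case $i=p-1$ is trivial since $a_{p,r,p-1}=p-1$ yields the free bound $f_{p-2}(\K)\leq\binom{n}{p-1}\leq C_p\, n^{p-1}$.

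For the inductive step with $p\geq 3$ and $i\geq p$, I would apply the link identity $i\,f_{i-1}(\K)=\sum_{v\in V(\K)} f_{i-2}(\lk_\K(v))$. By Part 2 of Lemma \ref{GirthProperties}, each $\lk_\K(v)$ has $(p-1)$-girth at least $\g_{p-1}(\K)>2p+2r-4$, so the inner-induction hypothesis applies to $\lk_\K(v)$ (of dimension at most $d-2$ and $\deg v$ vertices), giving
\[ f_{i-2}(\lk_\K(v))\leq C_{p,r,i-1}(d-1)^{(i-1)-a_{p,r,i-1}}(\deg v)^{a_{p,r,i-1}}. \]
Summing over $v$ and using $a_{p,r,i}=(a_{p-1,r,i-1}+a_{p,r,i-1})/2+1$, the desired bound on $f_{i-1}(\K)$ reduces to the power-sum inequality
\[ \sum_{v\in V(\K)}(\deg v)^{a_{p,r,i-1}}\leq C\,d^{(a_{p,r,i-1}-a_{p-1,r,i-1})/2}\,n^{(a_{p,r,i-1}+a_{p-1,r,i-1})/2+1}. \]
This is the higher-$p$ analog of Lemma \ref{PowerDegree}, and is the principal obstacle to the proof.

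The exponents are precisely what a Cauchy--Schwarz split $\sum_v(\deg v)^{a_{p,r,i-1}}\leq\bigl(\sum_v(\deg v)^{a_{p-1,r,i-1}}\bigr)^{1/2}\bigl(\sum_v(\deg v)^{2a_{p,r,i-1}-a_{p-1,r,i-1}}\bigr)^{1/2}$ would yield, suggesting a natural strategy of bounding the two factors separately. The first factor is a power sum of exponent $a_{p-1,r,i-1}$, which one would want to control via the outer induction on $p$; the difficulty is that $\g_{p-1}(\K)>2p+2r-4$ does not automatically imply a useful bound on $\g_{p-2}(\K)$, so the outer hypothesis cannot be invoked naively. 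Overcoming this is where I expect the bulk of the work to lie: I would develop a higher-dimensional analog of the non-returning walk machinery of Theorem \ref{FixedGirthStrong}, using walks on $(p-1)$-faces in which consecutive faces share a common $(p-2)$-face and a no-return rule is engineered so that any closed such walk produces a nontrivial element of $\tilde H_{p-1}$ in an induced subcomplex of some link, contradicting $\g_{p-1}(\K)>2p+2r-4$. Proving the resulting Moore-type bound on such walks (the analog of Lemma \ref{TreeLemma}) is the hardest single step, after which the weight-balancing/stable-distribution argument of Claim \ref{Claim1} and the AM--GM calculation in the proof of Theorem \ref{FixedGirthStrong} should convert the walk count into the power-sum bound above, closing the double induction. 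A subsidiary difficulty is that $\K$ is not automatically flag under the $(p-1)$-girth hypothesis alone, so some care is needed when invoking the suspension-type arguments (as in Lemma \ref{HighDegLemma}) that identify closed walks with nonzero homology classes.
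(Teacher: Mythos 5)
The statement you are attempting to prove is labeled a \emph{conjecture} in the paper, and the paper itself states explicitly: ``We do not have a general proof of Conjecture \ref{GenConjecture}, so we prove the conjecture in several special cases.'' So there is no paper proof of the full statement for you to have matched or missed; the paper establishes it only for flag complexes (Theorem \ref{generalflag}), for $r=2$, $i=p$ (Theorem \ref{Crosspolytopes}), and for $i=p=d$. Your algebraic setup is sound as far as it goes: the base cases are correct, and the reduction via $i\,f_{i-1}=\sum_v f_{i-2}(\lk_\K(v))$ together with the recursion $a_{p,r,i}=\tfrac12(a_{p-1,r,i-1}+a_{p,r,i-1})+1$ does yield exactly the power-sum target you display. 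But this route already diverges from what the paper does in its most general partial result: Theorem \ref{generalflag} does not go through vertex links and power sums at all. Instead it compares $f_{i-1}$ to $f_{i-2}$, uses the flag property plus Lemma \ref{HighDegLemma} to find, in the links of many $(i-1)$-faces, a quadratic number of non-edges, picks a good non-adjacent pair $(v,v')$, and forms the suspension $\K[W',v,v']$ to drop from $\g_{p-1}$ to $\g_{p-2}$; the reduction in $p$ is achieved through the suspension, not through a power sum with mixed exponents.

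The gaps you flag yourself are genuine and unfilled, and at least one of them is structural rather than merely technical. You need to control $\sum_v(\deg v)^{a_{p,r,i-1}}$ with a $d$-exponent of $(a_{p,r,i-1}-a_{p-1,r,i-1})/2$, which involves the index $p-1$; but the only hypothesis available is on $\g_{p-1}$, and as you note there is no general implication from $\g_{p-1}(\K)$ large to $\g_{p-2}$ of anything being large (Lemma \ref{IncreasingGirths} runs in the opposite direction). Your proposed Cauchy--Schwarz split also produces a \emph{harder} second factor $\sum_v(\deg v)^{2a_{p,r,i-1}-a_{p-1,r,i-1}}$, whose exponent exceeds $a_{p,r,i-1}$, so the split does not obviously make progress. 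The ``higher-dimensional non-returning walk on $(p-1)$-faces'' analogue of Lemma \ref{TreeLemma} and Claim \ref{Claim1} is sketched but not constructed, and since it would resolve the open conjecture, one cannot take it for granted. In short: your proposal correctly identifies the right target inequality, correctly identifies the base cases, and honestly names the hard steps, but it is a strategy outline with the decisive lemma missing, and it is a different outline than the one the paper uses even for the special cases it can handle.
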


We do not have a general proof of Conjecture \ref{GenConjecture}, so we prove the conjecture in several special cases.  Our next theorem verifies the conjecture for flag complexes.  We use the notation $\avg_{t \in T} f(t)$ to denote the average value of a real-valued function $f(t)$ as $t$ ranges over all elements of a finite set $T$.

\begin{theorem}
\label{generalflag}
Let $\K$ be a flag $(d-1)$-dimensional simplicial complex with $n$ vertices, and suppose that $\g_{p-1}(\K) > 2p+2r-4$, for some $p,r \geq 2$.  Then for a constant $C_{p,r,i}$ that depends only on $p,r,i$, $$f_{i-1}(\K) \leq C_{p,r,i}d^{i-a_{p,r,i}}n^{a_{p,r,i}}.$$
\end{theorem}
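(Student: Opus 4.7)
I propose a double induction on $(p, i)$: outer on $p$, inner on $i$. The base case $p = 2$ is handled by Theorem \ref{HighDim1Girth}, since $\g_{p-1}(\K) > 2p+2r-4$ then reads $\g_1(\K) > 2r$ and the exponent $a_{2,r,i} = 1 + 1/r + \cdots + 1/r^{i-1}$ matches the bound given there. The base case $i = p-1$ (within each $p$) is immediate from $f_{p-2}(\K) \leq \binom{n}{p-1}$, matching $a_{p,r,p-1} = p-1$ and $d^{i - a_{p,r,i}} = d^{0}$.

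For the inductive step at $(p, i)$ with $p \geq 3$ and $i \geq p$, I begin with the standard link identity $i \cdot f_{i-1}(\K) = \sum_{v} f_{i-2}(\lk_\K(v))$. Each link $\lk_\K(v)$ is flag by Lemma \ref{FlagProperties}(1), has dimension at most $d-2$, is on $\deg v$ vertices, and satisfies $\g_{p-1}(\lk_\K(v)) > 2p+2r-4$ by Lemma \ref{GirthProperties}(2); so the inductive hypothesis at $(p, i-1)$ gives the link bound
$$B(v) := C\,(d-1)^{(i-1) - a_{p,r,i-1}}\,(\deg v)^{a_{p,r,i-1}} \geq f_{i-2}(\lk_\K(v)).$$

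The key observation is that the recursion $a_{p,r,i} = \tfrac{1}{2}(a_{p-1,r,i-1} + a_{p,r,i-1}) + 1$ rearranges to
$$d^{i - a_{p,r,i}} n^{a_{p,r,i}} \;=\; n \cdot \sqrt{\bigl(d^{(i-1) - a_{p-1,r,i-1}} n^{a_{p-1,r,i-1}}\bigr)\bigl(d^{(i-1) - a_{p,r,i-1}} n^{a_{p,r,i-1}}\bigr)},$$
exhibiting the target as the geometric mean of two simpler inductive bounds, times $n$. This structure calls for Cauchy-Schwarz: if I can produce a second pointwise bound $f_{i-2}(\lk_\K(v)) \leq A(v) := C\,(d-1)^{(i-1)-a_{p-1,r,i-1}} (\deg v)^{a_{p-1,r,i-1}}$, then from $f_{i-2}(\lk_\K(v)) \leq \sqrt{A(v) B(v)}$ and
$$\sum_v \sqrt{A(v)B(v)} \;\leq\; \sqrt{\bigl(\textstyle\sum_v A(v)\bigr)\bigl(\sum_v B(v)\bigr)},$$
together with the trivial bounds $\sum_v (\deg v)^q \leq n^{1+q}$ applied to each factor, the theorem follows: the exponents on $d$ and $n$ line up exactly because $a_{p,r,i} < i$ (provable by induction), so $(d-1)^{i - a_{p,r,i}} \leq d^{i-a_{p,r,i}}$.

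The main obstacle is producing the auxiliary bound $A(v)$. Applying the outer induction at $(p-1, i-1)$ directly to $\lk_\K(v)$ would require $\g_{p-2}(\lk_\K(v)) > 2p+2r-6$, which is not automatic from $\g_{p-1}(\K) > 2p+2r-4$. What is available is the suspension analogue: for every non-edge $\{u,v\}$ in $\K$, the common-neighborhood complex $\K[W_{u,v}]$ satisfies $\g_{p-2}(\K[W_{u,v}]) > 2p+2r-6$, because a smaller $(p-2)$-cycle on $W \subseteq W_{u,v}$ would, via the suspension $\K[W \cup \{u,v\}] = \K[W] * \{u,v\}$, produce a $(p-1)$-cycle in $\K$ violating the hypothesis. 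To transfer this per-pair information into a per-vertex estimate, I would exploit the double-counting identity
$$\sum_{\{u,v\}\text{ non-edge}} f_{i-2}(\K[W_{u,v}]) \;=\; \sum_{|F|=i-1,\ F \in \K} \#\bigl\{\text{non-edges in } \lk_\K(F)\bigr\},$$
apply the outer induction to each term on the left, and combine with a Cauchy-Schwarz on $\sum_F N(F)^2$ (where $N(F) := f_0(\lk_\K(F))$, so $\sum_F N(F) = i f_{i-1}(\K)$), the identity $\sum_F f_1(\lk_\K(F)) = \binom{i+1}{2} f_i(\K)$, and a crude estimate $f_i(\K) \leq d\,f_{i-1}(\K)/(i+1)$ to close the loop. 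The delicate step is a dichotomy separating vertices (or faces) whose links are ``dense'' (many edges, controlled by the $f_i$ term) from those that are ``sparse'' (many non-edges, controlled by the suspension identity); once both regimes are bounded, careful bookkeeping confirms that the exponents match the recursion defining $a_{p,r,i}$.
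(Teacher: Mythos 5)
Your proposal identifies the correct high-level strategy, which matches the paper's: double induction on $(p,i)$, base cases from Theorem \ref{HighDim1Girth} and the trivial $i=p-1$ bound, the observation that the recursion defining $a_{p,r,i}$ exhibits the target as a geometric mean of the two ``lower'' inductive bounds, the suspension argument showing that the common neighborhood $\K[W_{u,v}]$ of a non-edge $\{u,v\}$ has $\g_{p-2}(\K[W_{u,v}]) > 2p+2r-6$, and the double-counting identity pairing non-edges of $\K$ with non-edges appearing in links of $(i-2)$-faces. Averaging via Cauchy--Schwarz on $N(F)=f_0(\lk_\K(F))$ and then using the inner induction to bound $f_{i-2}(\K)$ is exactly what makes the exponents land on $a_{p,r,i}$. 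Your recognition that the naive per-vertex bound $A(v)$ is unavailable, and that one must instead pass to the per-non-edge version, is also correct and mirrors the paper.

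There is, however, a concrete gap in the step you use to ``close the loop.'' You propose the crude estimate $f_i(\K) \leq d\,f_{i-1}(\K)/(i+1)$ to control $\sum_F f_1(\lk_\K(F)) = \binom{i+1}{2}f_i(\K)$. That inequality is false: for a $(d-1)$-dimensional flag complex $f_i/f_{i-1}$ can be of order $n/d$ rather than $O(d)$ (take the complete $d$-partite complex, where already $f_1/f_0 \approx n(d-1)/(2d)$). Without a valid bound here, you cannot conclude that $\sum_F s(\lk_\K(F)) = \sum_F\bigl[\binom{N(F)}{2} - f_1(\lk_\K(F))\bigr]$ is bounded below by a positive multiple of $\sum_F N(F)^2$, because in principle the ``dense'' links could have essentially all their $\binom{N(F)}{2}$ pairs joined by an edge. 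The correct replacement, and the device the paper uses, is Lemma \ref{HighDegLemma} applied pointwise to each link: for an $(i-2)$-face $F$ with $N(F) > Rdi/2$ ($R$ a large constant), $\lk_\K(F)$ is a flag complex of dimension at most $d-i < (1-\delta)N(F)$ with $\g_{p-1}(\lk_\K(F)) > 2p$, so $s(\lk_\K(F)) \geq \epsilon N(F)^2$ for an $\epsilon>0$ depending only on $p,r,i$. This is a \emph{local} statement about each dense link and does not require any control on $f_i(\K)$; it is precisely what makes the dichotomy you gesture at actually close. With this substitution, and after discarding the sparse faces (which contribute at most $Rdi\,f_{i-2}$ to $\sum_F N(F)$ and are absorbed once $f_{i-1}$ is large enough relative to $d\,f_{i-2}$), your bookkeeping does match the paper's and yields the stated exponents.
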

\begin{proof}
We prove the result by induction on $p$ and $i$.  The case that $p=2$ is a restatement of Theorem \ref{HighDim1Girth}, and the case that $i=p-1$ is trivial.  Now suppose that $p \geq 3$ and $i \geq p$, and let $f_{i-1}$ be given.

By the inductive hypothesis, $f_{i-2} \leq C_{p,r,i-1}d^{i-1-a_{p,r,i-1}}n^{a_{p,r,i-1}}$.  For a fixed value $R$ that is independent of $d$ and $n$, we may assume that $$f_{i-1} > RdC_{p,r,i-1}d^{i-1-a_{p,r,i-1}}n^{a_{p,r,i-1}};$$ otherwise, then the theorem follows by $a_{p,r,i} > a_{p,r,i-1}$ and by choosing $C_{p,r,i}$ sufficiently large.  The $(i-2)$-faces of $\K$ are contained in at least $\frac{if_{i-1}}{C_{p,r,i-1}d^{i-1-a_{p,r,i-1}}n^{a_{p,r,i-1}}}$ $(i-1)$-faces on average, or $$\avg_{F \in \K, |F|=i-1}f_0(\lk_\K(F)) = \frac{if_{i-1}(\K)}{f_{i-2}(\K)} \geq \frac{if_{i-1}}{C_{p,r,i-1}d^{i-1-a_{p,r,i-1}}n^{a_{p,r,i-1}}} > Rdi.$$  For an $(i-1)$-face $F$, let $q(F) := f_0(\lk_\K(F))$ if $f_0(\lk_\K(F)) > Rdi/2$ and $0$ otherwise.  It follows that $$\avg_{F \in \K, |F|=i-1}q(F) \geq \frac{if_{i-1}(\K)}{2f_{i-2}(\K)}.$$

For a simplicial complex $\Delta$, let $s(\Delta)$ denote the number of pairs of vertices that are \textit{not} joined by an edge; then $s(\Delta) = {f_0(\Delta) \choose 2} - f_1(\Delta)$.  Since $\K$ is flag, we conclude that for some $\epsilon$ that depends only on $p$, $i$, and $r$, and for all $(i-1)$-faces $F$, $s(\lk_\K(F)) \geq \epsilon q(F)^2$.  This follows from the first part of Lemma \ref{HighDegLemma} if $f_0(\lk_\K(F)) > Rdi/2$ and is trivial otherwise.  From the fact that for any set $T$ of real numbers, $\avg_{t \in T}(t^2) \geq (\avg_{t \in T})^2$, we conclude that there exists a constant $C_1$ independent of $d$ or $n$ such that $$\avg_{F \in \K, |F|=i-1}s(\lk_\K(F)) \geq \frac{C_1f_{i-1}(\K)^2}{f_{i-2}(\K)^2}.$$

For some constant $C_2$ independent of $d$ or $n$, there are at least $$\frac{C_1f_{i-1}(\K)^2}{f_{i-2}(\K)} \geq \frac{C_2f_{i-1}^2}{d^{i-1-a_{p,r,i-1}}n^{a_{p,r,i-1}}}$$ sets of the form $\{F,v,v'\}$, where $F$ is an $(i-2)$-face and $v,v'$ are vertices in $\lk_\K(F)$ that are not joined by an edge.  By Part 1 of Lemma \ref{FlagProperties}, if there is no edge $vv'$ between two vertices $v$ and $v'$ in $\lk_\K(F)$ for a face $F$, then there is no edge $vv'$ in $\K$.  There are at most ${n \choose 2}$ pairs of vertices $v$ and $v'$ that are not joined by an edge; call that set of pairs $S$.  For all $(v,v') \in S$, let $k_{i-2}(v,v')$ be the number of $(i-2)$-faces whose links contain both $v$ and $v'$.  Then $$\avg_{(v,v')\in S}k_{i-2}(v,v') \geq \frac{C_3f_{i-1}^2}{d^{i-1-a_{p,r,i-1}}n^{2+a_{p,r,i-1}}}$$ for some constant $C_3$ that depends only on $C_2$.

Choose $(v,v') \in S$ so that $$k_{i-2}(v,v') \geq \frac{C_3f_{i-1}^2}{d^{i-1-a_{p,r,i-1}}n^{2+a_{p,r,i-1}}}.$$  Let $W$ be the set of vertices incident to both $v$ and $v'$, so that $$f_{i-2}(\K[W]) \geq \frac{C_3f_{i-1}^2}{d^{i-1-a_{p,r,i-1}}n^{2+a_{p,r,i-1}}}.$$ By the inductive hypothesis, $\g_{p-2}(\K[W]) \leq 2p+2r-6$ if $$\frac{C_3f_{i-1}^2}{d^{i-1-a_{p,r,i-1}}n^{2+a_{p,r,i-1}}} \geq C_{p-1,n,i-1}d^{i-1-a_{p-1,r,i-1}}n^{a_{p-1,n,i-1}}.$$ By Part 3 of Lemma \ref{FlagProperties}, there exists $W' \subset W$ so that $|W| \leq 2p-2r-6$ and $\tilde{H}_{p-2}(\K[W'];\field) \neq 0$. Then $\K[W',v,v']$ is the suspension of $\K[W']$, which implies that $\tilde{H}_{p-1}(\K[W,v,v']; \field) \neq 0$ and $\g_{p-1}(\K[W]) \leq 2p+2r-4$, a contradiction.  Therefore, $$\frac{C_3f_{i-1}^2}{d^{i-1-a_{p,r,i-1}}n^{2+a_{p,r,i-1}}} < C_{p-1,n,i-1}d^{i-1-a_{p-1,r,i-1}}n^{a_{p-1,n,i-1}}.$$ We conlcude that, for a constant $C_{p,r,i}$, 
\begin{eqnarray*}
& f_{i-1} & < C_{p,r,i}d^{(i-1)/2-a_{p,r,i-1}/2+(i-1)/2-a_{p-1,r,i-1}/2}n^{a_{p,r,i-1}/2+a_{p-1,r,i-1}/2+1} \\
& & =C_{p,r,i}d^{i-a_{p,r,i}}n^{a_{p,r,d,i}}.
\end{eqnarray*}
\end{proof}

The next special case verifies Conjecture \ref{GenConjecture} in the case that both $r=2$ and $i=p$.  First we need some technical lemmas.

\begin{lemma}
\label{IncreasingGirths}
Let $\K$ be a simplicial complex, and suppose that $\g_{p}(\K) < \infty$.  Then $\g_{p-k}(\K) \leq \g_p(\K)-k$ for all $0 \leq k \leq p$.
\end{lemma}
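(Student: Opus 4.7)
The plan is induction on $k$, with $k = 0$ being tautological. For the inductive step, assume $\g_{p-k}(\K) \leq m - k$, where $m := \g_p(\K)$, and pick a face $F_k \in \K$ together with a subset $W_k$ realizing the minimum in the definition of $\g_{p-k}(\K)$: thus $|W_k| = \g_{p-k}(\K) =: m_k \leq m - k$ and $\tilde{H}_{p-k}(\lk_\K(F_k)[W_k]; \field) \neq 0$. Since any complex with nontrivial $(p-k)$-th reduced homology has at least $(p-k)+2 \geq 2$ vertices, $W_k$ is nonempty, and one may pick any vertex $v \in W_k$.

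Set $\Delta_k := \lk_\K(F_k)[W_k]$ and decompose $\Delta_k = \st_{\Delta_k}(v) \cup \astar_{\Delta_k}(v)$, with intersection $\lk_{\Delta_k}(v)$. The closed star is a cone with apex $v$, hence contractible, so the reduced Mayer--Vietoris sequence collapses to
$$\tilde{H}_{p-k}(\astar_{\Delta_k}(v); \field) \to \tilde{H}_{p-k}(\Delta_k; \field) \to \tilde{H}_{p-k-1}(\lk_{\Delta_k}(v); \field).$$
Unwinding the definitions gives $\astar_{\Delta_k}(v) = \lk_\K(F_k)[W_k \setminus \{v\}]$ and $\lk_{\Delta_k}(v) = \lk_\K(F_k \cup \{v\})[W_k \setminus \{v\}]$, each on $m_k - 1$ vertices. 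The antistar has fewer than $\g_{p-k}(\K)$ vertices, so the definition of girth forces $\tilde{H}_{p-k}(\astar_{\Delta_k}(v); \field) = 0$; the middle term is nonzero by choice of $F_k, W_k$; hence the right-hand term is nonzero. This is exactly a witness for $\g_{p-k-1}(\K) \leq m_k - 1 \leq m - (k+1)$, completing the induction.

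The argument is conceptually short and presents no real obstacle: minimality of $W_k$ kills the antistar homology, and Mayer--Vietoris then transports the nonzero $(p-k)$-cycle in $\Delta_k$ to a nonzero $(p-k-1)$-cycle in the link of $v$, which is an induced subcomplex of the link of the slightly larger face $F_k \cup \{v\}$. The only step requiring care is the bookkeeping for the two link-of-link identities $\astar_{\Delta_k}(v) = \lk_\K(F_k)[W_k \setminus \{v\}]$ and $\lk_{\Delta_k}(v) = \lk_\K(F_k \cup \{v\})[W_k \setminus \{v\}]$, which follow directly once one checks that taking links and taking induced subcomplexes commute appropriately.
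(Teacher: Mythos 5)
Your proof is correct and takes essentially the same route as the paper's: pick a vertex $v$ in a minimal witness set $W$, observe that minimality of $|W|$ forces the $(p-k)$-th homology of the induced complex on $W \setminus \{v\}$ to vanish, then use the Mayer--Vietoris sequence for the star/antistar decomposition to push the nonzero class down one degree into $\lk_\K(F \cup \{v\})[W \setminus \{v\}]$. The only cosmetic difference is that the paper reduces immediately to the case $k = 1$ and cites Mayer--Vietoris in one line, whereas you run a full induction on $k$ and spell out the star/antistar identifications.
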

\begin{proof}
It suffices to prove the lemma for $k=1$.  Let $F \in \K$ and $W \subset V(\K)$ so that $|W| = \g_p(\K)$ and $\tilde{H}_p(\lk_\K(F)[W]);\field) \neq 0$.  Choose $v \in W$.  By definition of $\g_p$, $\tilde{H}_p(\lk_\K(F)[W-\{v\}];\field) = 0$.  It follows from the Mayer-Vietoris sequence that $\tilde{H}_{p-1}(\lk_\K(F \cup \{v\})[W-\{v\}]);\field) \neq 0$, proving the result.
\end{proof}

\begin{lemma}
\label{CPLemma}
Let $d$ and $p$ be fixed, and let $\Delta$ be a $(d-1)$-dimensional simplicial complex with $V(\Delta) = T \sqcup W, |T| = n$.  Suppose that for all $v \in T$, $\tilde{H}_{p-1}(\lk_\Delta(v)[W]; \field) \neq 0$.  Then for some $i \geq 2$ and for some constant $C$ that depends on $i,d,|W|,p$, there exist at least $Cn^i$ subsets $W' \subset T$ satisfying $|W'|=i$ and $\tilde{H}_{p+i-2}(\Delta[W' \cup W]; \field) \neq 0$ if $n$ is sufficiently large.
\end{lemma}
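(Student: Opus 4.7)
The plan is to prove the lemma with $i = 2$, which always suffices. First, I would carry out a pigeonhole reduction on the link type: the complex $\Delta[W]$ has at most $2^{|W|}$ faces, so at most $2^{2^{|W|}}$ distinct subcomplexes can arise as $\lk_\Delta(v)[W]$ for $v \in T$. Pigeonhole therefore yields a subset $T' \subseteq T$ with $|T'| \geq n / 2^{2^{|W|}}$ on which the link $\lk_\Delta(v)[W] = L$ is the same for every $v \in T'$. By hypothesis $\tilde{H}_{p-1}(L;\field) \neq 0$, so I fix once and for all a $(p-1)$-cycle $z$ in $L$ representing a nonzero homology class.

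Next I count non-adjacent pairs in $T'$ and exhibit an explicit nontrivial $p$-cycle for each. Since $\dim \Delta = d-1$, the one-skeleton of $\Delta[T']$ contains no $(d+1)$-clique, so Tur\'an's theorem yields at least $|T'|^2/(2d) - O(|T'|) = \Omega(n^2)$ pairs $\{v_1, v_2\} \subseteq T'$ with $v_1 v_2 \notin \Delta$. For any such pair, every face $F$ in the support of $z$ lies in $L = \lk_\Delta(v_j)[W]$, so $\{v_j\} \cup F \in \Delta$ and the cone chain $v_j \cdot z$ (with $v_j$ adjoined as the leading vertex) is a well-defined $p$-chain in $\Delta[\{v_1, v_2\} \cup W]$ satisfying $\partial(v_j \cdot z) = z$. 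Hence $c := v_1 \cdot z - v_2 \cdot z$ is a $p$-cycle, and I claim $[c]$ is nonzero in $\tilde{H}_p(\Delta[\{v_1, v_2\} \cup W]; \field)$. Since $p + i - 2 = p$ when $i = 2$, this together with the Tur\'an count produces the $\Omega(n^2)$ valid subsets $W' = \{v_1, v_2\}$ required by the lemma, with $C = C(d, |W|, p)$.

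The main obstacle is verifying that $[c] \neq 0$, and here the non-adjacency hypothesis $v_1 v_2 \notin \Delta$ is crucial. Because $\{v_1, v_2\}$ is not a face, every $(p+1)$-face of $\Delta[\{v_1, v_2\} \cup W]$ is either contained in $W$ or of the form $\{v_j\} \cup F$ with $F \in L$ and $|F| = p+1$; every $(p+1)$-chain therefore decomposes uniquely as $\tilde{c} = c_W + v_1 \cdot c_1 + v_2 \cdot c_2$ with $c_W \in C_{p+1}(\Delta[W])$ and $c_j \in C_p(L)$, and a direct computation gives $\partial \tilde{c} = (\partial c_W + c_1 + c_2) - v_1 \cdot \partial c_1 - v_2 \cdot \partial c_2$. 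Setting $\partial \tilde{c} = c$ and matching the $v_1$-component forces $\partial c_1 = -z$ in $L$, contradicting $[z] \neq 0 \in \tilde{H}_{p-1}(L)$. Equivalently, one may phrase this via Mayer--Vietoris on $\Delta[\{v_1, v_2\} \cup W] = (v_1 * L) \cup (\Delta[W] \cup v_2 * L)$ with intersection $L$: the cone $v_2 * L$ kills $[z]$ in the right-hand piece, so the Mayer--Vietoris connecting map lifts $[z] \in \tilde{H}_{p-1}(L)$ to a nonzero class in $\tilde{H}_p$. This completes the plan.
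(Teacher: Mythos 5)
Your cone-chain decomposition for non-adjacent pairs is correct and is, in effect, the same calculation the paper's proof makes with Mayer--Vietoris when it says ``for at least ${|T'| \choose 2}/2$ pairs, there is an edge $uv$'': both arguments establish that $v_1v_2 \notin \Delta$ forces $\tilde{H}_p(\Delta[W \cup \{v_1,v_2\}];\field) \neq 0$. However, the Tur\'an step is a genuine gap. You assert that because $\dim \Delta = d-1$, the one-skeleton of $\Delta[T']$ contains no $(d+1)$-clique; but a $(d+1)$-clique in the \emph{graph} $\Skel_1(\Delta[T'])$ need not be a face of $\Delta$, so this bound only holds when $\Delta$ is flag, which the lemma does not assume. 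The failure is not academic: in the paper's application of Lemma \ref{CPLemma} inside Theorem \ref{Crosspolytopes}, the complex $\K$ is first modified so that \emph{every} subset of cardinality less than $p$ is a face, which for $p \geq 3$ makes the one-skeleton a complete graph; your Tur\'an count then returns zero non-adjacent pairs and the argument produces nothing.

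The paper's proof avoids this by not insisting on $i=2$. It splits the pairs: if at least half already have $\tilde{H}_p(\Delta[W \cup \{u,v\}]) \neq 0$, it finishes with $i=2$; otherwise at least half are adjacent with $\tilde{H}_{p-1}(\lk_\Delta(uv)[W]) \neq 0$, and it passes to $\lk_\Delta(v)[T'_v, W]$, which has strictly smaller dimension, and applies the lemma inductively on $d$. That recursion is precisely what handles the dense-skeleton case your argument cannot, and it is why the conclusion allows an unspecified $i \geq 2$ depending on how many times the induction recurses before a non-adjacent pair appears. Your opening claim that ``$i=2$ always suffices'' is therefore both unproved by your argument and not what the lemma asserts; you would need either a flag hypothesis or the paper's dimension induction to close the gap.
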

\begin{proof}
We prove the result by induction on $d$.  Let $\Lambda$ be the simplicial complex that is the value of $\lk_\Delta(v)[W]$ for the largest number of vertices $v$.  We restrict attention to the subcomplex $\Delta' = \Delta[T',W]$, where $T'$ is the set of vertices $v$ satisfying $\lk_\Delta(v)[W] = \Lambda$.  Then for a (possibly very small) constant $C'$ that depends only on $|W|$, $|T'| \geq C'|T|$.

If there are ${|T'| \choose 2}/2 \geq {C'n \choose 2}/2$ pairs of vertices $\{u,v\} \subset T'$ such that $\tilde{H}_p(\Delta[W \cup \{u,v\}];\field) \neq 0$, then the result holds with $i=2$.  Otherwise, by the Mayer-Vietoris sequence, for at least ${|T'| \choose 2}/2$ pairs of vertices $\{u,v\} \subset T'$, there is an edge $uv$ and $\tilde{H}_{p-1}(\lk_\Delta(uv)[W];\field) \neq 0$.  Let $T'_v := \{u: \tilde{H}_{p-1}(\lk_\Delta(uv);\field) \neq 0\}$.  Then $\avg_{v \in T'}|T'_v| \geq (|T'|-1)/2 \geq (C'n-1)/2$.  Since $|T'_v| \leq |T'|$ for all $v$, there exists a set $V \subset T$ so that $|V| \geq |T'|/5$ and for all $v \in V$, $|T'_v| \geq |T'|/5$.

For $v \in V$, $\lk_\Delta(v)[T'_v,W]$ satisfies the conditions of the lemma and so, by the inductive hypothesis, for some $i_v$ and constants $C_1$ and $C'_1$ that depend only on $i_v,d,|W|,p$, there exist at least $C_1(C'n/5)^{i_v} = C'_1n^{i_v}$ subsets $W' \subset T'_v$ so that $W'=i_v$ and $\tilde{H}_{p+i_v-2}(\lk_\Delta(v)[W' \cup W];\field) \neq 0$.  If for some $v \in V$, $\tilde{H}_{p+i_v-2}(\Delta[W' \cup W]; \field) \neq 0$ for $(C_1'/2)n^{i_v}$ such values of $W'$, the lemma is proven by taking $C = C_1'/2$.  Otherwise, by the Mayer-Vietoris sequence, there exist $(C_1'/2)n^{i_v}$ values of $W'$ so that $|W'| = i_v$ and $\tilde{H}_{p+i_v-1}(\Delta[W' \cup \{v\} \cup W];\field) \neq 0$.  Choose $i$ so that $i$ is the value of $i_v$ for the largest number of vertices $v \in V$.  Since $i_v \leq d$ for all $v$, there are at least $\frac{C'}{5d}n$ vertices $v$ so that there exist $(C_1'/2)n^{i}$ values of $W'$ so that $|W'|=i$ and $\tilde{H}_{p+i-1}(\Delta[W' \cup \{v\} \cup W];\field) \neq 0$.  We conclude that there exist at least $\frac{C'C_1'}{10d(i+1)}n^{i+1}$ values of $W'$ so that $|W'| = i+1$ and $\tilde{H}_{p+i-1}(\Delta[W' \cup W];\field) \neq 0$.  This proves the lemma.
\end{proof}

\begin{theorem}
\label{Crosspolytopes}
Let $\K$ be a $(d-1)$-dimensional simplicial complex with $n$ vertices, and suppose that $\g_{p-1}(\K) > 2p$ for some $p \geq 2$.  Then for a constant $C_{p,d}$ that depends only on $p$ and $d$, $$f_{p-1}(\K) \leq C_{p,d}n^{p-\frac{1}{2^{p-1}}} = C_{p,d}n^{a_{p,2,p}}.$$
\end{theorem}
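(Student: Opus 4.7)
The plan is to proceed by induction on $p$.  The base case $p=2$ is a consequence of Theorem~\ref{HighDim1Girth} applied with $r=2$ and $i=1$: under $\g_1(\K)>4$, we have $f_1(\K) \leq C_{2,d}\, n^{3/2} = C_{2,d}\, n^{a_{2,2,2}}$.

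For the inductive step, assume the bound for $p-1$: every complex $\Delta$ on $m$ vertices with $\g_{p-2}(\Delta) > 2(p-1)$ satisfies $f_{p-2}(\Delta) \leq C_{p-1,d}\, m^{(p-1)-1/2^{p-2}}$.  Suppose for contradiction that $\g_{p-1}(\K) > 2p$ and $f_{p-1}(\K) > C n^{p-1/2^{p-1}}$ for a sufficiently large constant $C$.  The endgame is to invoke Lemma~\ref{CPLemma} with its parameter $p$ replaced by $p-1$ and $i=2$, producing an induced subcomplex $\K[W \cup W']$ on at most $|W|+2 \leq 2p$ vertices with $\tilde{H}_{p-1}(\K[W \cup W']; \field) \neq 0$, contradicting $\g_{p-1}(\K) > 2p$.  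For this, I need to find a set $W \subseteq V(\K)$ with $|W| \leq 2p-2$ together with a set $T \subseteq V(\K) \setminus W$ of vertices, with $|T|$ exceeding the threshold of Lemma~\ref{CPLemma}, such that $\tilde{H}_{p-2}(\lk_\K(v)[W]; \field) \neq 0$ for every $v \in T$.

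To construct $W$ and $T$, I would adapt the pairing step of the proof of Theorem~\ref{generalflag}, trading its use of flagness for Lemma~\ref{CPLemma}.  A Cauchy--Schwarz argument on the incidences between $(p-2)$-faces and $(p-1)$-faces gives $\sum_{F:\,|F|=p-1} d(F)^2 \geq (p f_{p-1}(\K))^2 / f_{p-2}(\K)$, where $d(F) := f_0(\lk_\K(F))$.  Averaging over unordered pairs $\{v_1, v_2\}$ picks out a specific pair with $\Omega(f_{p-1}(\K)^2 / (n^2 f_{p-2}(\K)))$ common $(p-2)$-subfaces; letting $Y$ denote the common neighborhood of this pair, all such faces lie in $\K[Y]$, so $f_{p-2}(\K[Y])$ is large.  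Applied contrapositively to $\K[Y]$, the inductive hypothesis then forces $\g_{p-2}(\K[Y]) \leq 2p-2$, which by definition yields $W \subseteq Y$ with $|W| \leq 2p-2$ and a face $F_0 \in \K[Y]$ such that $\tilde{H}_{p-2}(\lk_{\K[Y]}(F_0)[W]; \field) \neq 0$.  Passing to the link of $F_0$ in $\K$ (with Lemma~\ref{IncreasingGirths} tracking the degrees in which homology survives at each intermediate step), both $v_1$ and $v_2$ themselves seed the set $T$, which is then enlarged to the required size by iterating the averaging within the common neighborhood.

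The main technical obstacle is to arrange that $|T|$ is large enough for Lemma~\ref{CPLemma} to produce its conclusion while keeping $|W|$ bounded by $2p-2$.  This forces careful bookkeeping so that the exponent of $n$ matches $a_{p,2,p} = p - 1/2^{p-1} = \tfrac{1}{2} a_{p-1,2,p-1} + \tfrac{1}{2} a_{p,2,p-1} + 1$: the ``$+1$'' tracks the $\binom{n}{2}$ factor from the averaging over vertex pairs, the term $\tfrac{1}{2} a_{p-1,2,p-1}$ comes from the inductive bound on $f_{p-2}(\K[Y])$, and $\tfrac{1}{2} a_{p,2,p-1} = \tfrac{1}{2}(p-1)$ reflects the trivial bound $f_{p-2}(\K) \leq \binom{n}{p-1}$ entering the Cauchy--Schwarz step.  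Executing this bookkeeping so that the implicit constant depends only on $p$ and $d$ is where the bulk of the work lies.
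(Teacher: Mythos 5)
Your proposal takes a genuinely different route from the paper, and it contains a gap that is not just bookkeeping. The paper does not prove Theorem~\ref{Crosspolytopes} by induction on $p$. Instead it iterates Lemma~\ref{CPLemma} inside links of faces of decreasing cardinality: starting from $(p-2)$-faces with $W=\emptyset$ (so the hypothesis of the lemma is vacuous), each application produces a supply of open cycles, and the running variable is $Q=q_1+\cdots+q_{j-1}$. At step $j$ one averages over all pairs $(F,W)$ with $|F|=p-j$, $|W|=Q$, and shows that the \emph{average} size of $U_{W,F}=\{v:\tilde H_{Q-j}(\lk_\K(F\cup\{v\})[W];\field)\neq 0\}$ is polynomially large in $n$; that set is precisely the $T$ fed into the next application of Lemma~\ref{CPLemma}. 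The iteration ends once $Q+q_j\geq p+j$, giving an $(s,q)$-open cycle in some link with $s\geq p$ and $q\leq s+p$, and Lemma~\ref{IncreasingGirths} converts that into $\g_{p-1}(\K)\leq 2p$.

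You correctly identify both ingredients (Lemma~\ref{CPLemma} as the engine, Lemma~\ref{IncreasingGirths} to drop the homological degree back down to $p-1$), and your exponent accounting $a_{p,2,p}=\tfrac12 a_{p-1,2,p-1}+\tfrac12 a_{p,2,p-1}+1$ is consistent with the Cauchy--Schwarz step you describe. The gap is in the construction of $T$. Your Cauchy--Schwarz argument pins down a single pair $\{v_1,v_2\}$ and its common neighborhood $Y$; the inductive hypothesis then yields one set $W$ and one face $F_0$ with $\tilde H_{p-2}(\lk_{\K[Y]}(F_0)[W];\field)\neq 0$. This does not give $\tilde H_{p-2}(\lk_\K(v_i)[W];\field)\neq 0$: without flagness there is no reason for a nontrivial cycle in $\lk_{\K[Y]}(F_0)[W]$ to survive restriction to $\lk_\K(v_i)[W]$, since whether $F\cup\{v_i\}$ is a face for $F\subset W$ is not controlled by edges alone. (This is exactly the step where Theorem~\ref{generalflag} leans on Lemma~\ref{FlagProperties}.) So $v_1,v_2$ do not even ``seed'' $T$, and the suggestion to ``iterate the averaging within the common neighborhood'' only produces another single pair and another shrinking neighborhood — it does not grow $T$ to the polynomial size Lemma~\ref{CPLemma}'s ``$n$ sufficiently large'' clause demands.

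If you want to keep an induction-on-$p$ shape, the fix is to never single out a pair early: average over all $(F,W)$ with $|W|\leq 2p-2$ simultaneously, so that for some choice the associated $T$ is large all at once. That is effectively what the paper's $U_{W,F}$ averaging does, and it is the reason the paper's argument never needs to pick a distinguished pair $\{v_1,v_2\}$ or a distinguished common neighborhood $Y$.
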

\begin{proof}
We use the notion of an $(s,q)$-\textit{open cycle}, which is a subset $W=\{w_1,\ldots,w_{q}\} \subset V(\K)$ so that $\tilde{H}_{s-1}(\K[W];\field) \neq 0$.  Let $\CP_{s,q}(\K)$ be the set of $(s,q)$-open cycles in $\K$, and let $\cp_{s,q}(\K) := |\CP_{s,q}(\K)|$.

Adding a face $F$ with $|F|<p$ to $\K$ does not affect $\g_{p-1}(\K)$, and so we assume without loss of generality that all sets of cardinality less than $p$ are faces in $\K$.

Suppose that $R$ is large, independently of $n$, and that $f_{p-1} > Rn^{p-\frac{1}{2^{p-1}}}$.  Then we show that there exists a face $F$ of $\K$ (possibly the empty face) such that $\lk_\K(F)$ contains an $(s,q)$-open cycle for some $s \geq p$ and $q \leq s+p$.   The theorem then follows by Lemma \ref{IncreasingGirths}.

Since $$\avg_{F \in \K, |F|=p-1}f_0(\lk_\K(F)) \geq C Rn^{1-\frac{1}{2^{p-1}}}$$ for some constant $C$ independent of $n$, by Lemma \ref{CPLemma} (with $W = \emptyset$) there exists for each $F$ $2 \leq q_1(F) \geq d+1$ and constant $C'_1$ independent of $n$ so that if $f_0(\lk_\K(F))$ is sufficiently large, 
\begin{equation*}
\cp_{q_1-1,q_1}(\lk_\K(F)) \geq C'_1 f_0(\lk_\K(F))^{q_1(F)}.
\end{equation*}
By considering the value of $q_1$ so that $\sum_{|F|=p-1, q_1(F)=q_1}f_0(\lk_\K(F))$ is maximal, and the fact that $\avg_{t \in T}(t^q) \leq (\avg_{t \in T}(t))^q$ for all $q \geq 1$, we have that
\begin{equation*}
\avg_{F \in \K, |F|=p-1}\cp_{q_1-1,q_1}(\lk_\K(F)) \geq C_1Rn^{q_1-\frac{q_1}{2^{p-1}}}.
\end{equation*}
If $q_1 \geq p+1$, then the result is proven, so suppose that $q_1 \leq p$.

Now suppose that we have found $j$, $q_1, \ldots, q_{j-1}$ such that $Q := q_1+\ldots+q_{j-1} \leq p+j-2$, and constant $C_{j-1}$ independent of $n$ such that 
\begin{equation}
\label{CPEquation}
\avg_{F \in \K, |F|=p-j+1}\cp_{Q-j+1,Q}(\lk_\K(F)) \geq C_{j-1}Rn^{Q-\frac{q_1 \ldots q_{j-1}}{2^{p-1}}}.
\end{equation}
Consider $F$ with $|F|=p-j$.  For each $W \subset V(\K), |W|=Q$, let $U_{W,F}$ be the set of vertices $v$ such that $\tilde{H}_{Q-j}(\lk_\K(F \cup \{v\})[W]; \field) \neq 0$.  From (\ref{CPEquation}) and the fact that all $(p-j+1)$-subsets of $\K$ are faces, we have that $$\avg_{F \in \K, |F|=p-j}\sum_{v \in V(\K)-F}\cp_{Q-j+1,Q}(\lk_\K(F \cup \{v\})) \geq C_{j-1}Rn^{Q-\frac{q_1 \ldots q_{j-1}}{2^{p-1}}}(n-p+j).$$  Since there are fewer than $n^{Q}\frac{n-p+j}{n}$ subsets $W$ of size $Q$ of $V(\K)$, it follows that $$\avg_{F \in \K, |F|=p-j, W \subset V(\K), |W|=Q}|U_{W,F}| \geq C_{j-1}Rn^{1-\frac{q_1 \ldots q_{j-1}}{2^{p-1}}},$$ and since $1-q_1 \ldots q_{j-1}2^{-p+1} \geq 0$, $$\avg_{F \in \K, |F|=p-j}\cp_{Q+q_j-j,Q+q_j}(\lk_\K(F)) \geq C_jRn^{Q+q_j-\frac{q_1 \ldots q_j}{2^{p-1}}}$$ for some $q_j \geq 2$ and $C_j$ independent of $n$, again by Lemma \ref{CPLemma} and the above reasoning.  The theorem follows if $Q+q_j \geq p+j$.  Otherwise, repeat this argument until such $j$ is found.
\end{proof}

Our last special case verifies Conjecture \ref{GenConjecture} in the case that $i=p=d$.

\begin{theorem}
Let $\K$ be a $(p-1)$-dimensional simplicial complex with $n$ vertices, and suppose that $\g_{p-1}(\K) > 2p+2r-4$, for some $p \geq 2$ and $r \geq 2$.  Then for a constant $C_{p,r}$ that depends only on $p$ and $r$, $$f_{p-1}(\K) \leq C_{p,r}n^{p-\frac{4r-4}{r2^p}} = C_{p,r}n^{a_{p,r,p}}.$$
\end{theorem}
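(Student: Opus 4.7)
The plan is to prove the bound by induction on $p$, guided by the defining recursion $2a_{p,r,p} = a_{p-1,r,p-1} + p + 1$. The base case $p = 2$ is immediate from Theorem~\ref{HighDim1Girth} applied with $i = 1$ and $d = 2$, since $a_{2,r,2} = 1 + 1/r$ and $2p+2r-4 = 2r$ in this case.

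For the inductive step $p \geq 3$, the recursion suggests a Cauchy--Schwarz argument aimed at the quadratic inequality $f_{p-1}(\K)^2 \leq C\,n^{p+1+a_{p-1,r,p-1}}$. First I would reduce (as in Theorem \ref{Crosspolytopes}) to the case where all subsets of size less than $p$ are faces of $\K$. Starting from the identity $p\,f_{p-1}(\K) = \sum_{|F|=p-1} f_0(\lk_\K(F))$ and using $f_{p-2}(\K) \leq \binom{n}{p-1}$, Cauchy--Schwarz yields
\[
f_{p-1}(\K)^2 \leq C\,n^{p-1}\sum_{|F|=p-1} f_0(\lk_\K(F))^2.
\]
Expanding the sum of squares by ordered pairs $(u,v)$ of vertices in each link, it equals $p\,f_{p-1}(\K) + \sum_{u \neq v} f_{p-2}(\Lambda_{u,v})$, where $\Lambda_{u,v} := \lk_\K(u) \cap \lk_\K(v)$ has dimension at most $p-2$.

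The crucial step is to verify that $\g_{p-2}(\Lambda_{u,v}) > 2(p-1) + 2r - 4 = 2p + 2r - 6$ for every ordered pair, so that the inductive hypothesis delivers $f_{p-2}(\Lambda_{u,v}) \leq C\,n^{a_{p-1,r,p-1}}$. I would prove this by Mayer--Vietoris: if some $F \in \Lambda_{u,v}$ and $W$ with $|W| \leq 2p+2r-6$ realize a nonzero class in $\tilde{H}_{p-2}(\lk_{\Lambda_{u,v}}(F)[W])$, then noting that $\lk_{\Lambda_{u,v}}(F) = \lk_\K(F \cup \{u\}) \cap \lk_\K(F \cup \{v\})$, the decomposition of $\lk_\K(F)[W \cup \{u,v\}]$ as the union of the closed (contractible) stars of $u$ and $v$ produces a nonzero class in $\tilde{H}_{p-1}$ of the star union. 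Pushing this class into $\tilde{H}_{p-1}(\lk_\K(F)[W \cup \{u,v\}])$, an induced subcomplex of $\K$ on at most $2p+2r-4$ vertices, contradicts $\g_{p-1}(\K) > 2p+2r-4$. Summing the inductive bound over the $O(n^2)$ pairs yields $\sum_{u \neq v} f_{p-2}(\Lambda_{u,v}) \leq C\,n^{2+a_{p-1,r,p-1}}$, and substituting back gives $f_{p-1}(\K)^2 \leq C\,n^{p+1+a_{p-1,r,p-1}} = C\,n^{2a_{p,r,p}}$.

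The main obstacle is the last Mayer--Vietoris step. The star union can be a proper subcomplex of $\lk_\K(F)[W \cup \{u, v\}]$, since faces of $\lk_\K(F)$ lying wholly within $W$ and extending to neither $u$ nor $v$ enlarge the full induced subcomplex and could in principle kill the $(p-1)$-class produced by Mayer--Vietoris; one must verify via the long exact sequence of the pair that the new class survives the inclusion. Choosing $W$ minimal for the nonzero class in $\lk_{\Lambda_{u,v}}(F)[W]$, prioritizing pairs with $\{u, v\} \notin \K$ in the sum, and separately handling the edge case $\{u, v\} \in \K$ via the cone-attachment analysis of $A \cap B$ along $\lk_\K(F \cup \{u, v\})[W]$ should close the argument, but this is where the most careful bookkeeping is required.
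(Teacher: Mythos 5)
Your proposal is correct and follows essentially the same route as the paper: induction on $p$, an averaging/Cauchy--Schwarz argument to locate a vertex pair $(u,v)$ whose common links carry many $(p-2)$-faces, the inductive hypothesis applied to $\Lambda_{u,v}$, and a suspension over $\{u,v\}$ to produce a $(p-1)$-cycle on few vertices. (The paper selects a single best pair $(u,v)$ rather than summing over all pairs, and it counts non-edges $s(\lk_\K(F))$ rather than ordered pairs, but since the link of a $(p-2)$-face in a $(p-1)$-dimensional complex has no edges these are the same quantity; the recursion $2a_{p,r,p}=a_{p-1,r,p-1}+p+1$ you identify is exactly what the paper uses.)

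The one place where you substantially overcomplicate is the ``main obstacle'' at the end. There is no obstacle. Two observations dissolve it. First, since $\dim\Lambda_{u,v}\leq p-2$, the link $\lk_{\Lambda_{u,v}}(F)$ for any nonempty $F$ has dimension strictly below $p-2$, so the only face that can realize $\g_{p-2}(\Lambda_{u,v})$ is $F=\emptyset$; you may forget about nonempty $F$ entirely. Second, once you have a nonzero $(p-2)$-cycle $z$ in $\Lambda_{u,v}[W]$, the chain $u*z - v*z$ is a $(p-1)$-cycle supported in $\K[W\cup\{u,v\}]$ (note $u,v\notin V(\Lambda_{u,v})$, so $|W\cup\{u,v\}|=|W|+2\leq 2p+2r-4$), and it is nonzero as a chain because its support contains faces $\{u\}\cup G$ for $G$ in the support of $z$. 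A nonzero $(p-1)$-cycle in a $(p-1)$-dimensional complex cannot be a boundary, so it represents a nonzero homology class and $\g_{p-1}(\K)\leq 2p+2r-4$. No Mayer--Vietoris, no long exact sequence of the pair, no case analysis on whether $uv\in\K$, and no special choice of $W$ minimal is needed --- the paper itself makes exactly this remark (``that cycle cannot be a boundary in a $(p-1)$-dimensional simplicial complex''). With that substitution your argument is complete.
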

\begin{proof}
We prove by induction on $p$ that if $\K$ has dimension $p-1$ and satisfies $f_{p-1}(\K) > C_{p,r}n^{p-\frac{4r-4}{r2^p}}$, then there exists $W \subset V(\K)$ such that $|W| \leq 2p+2r-4$ and $\K[W]$ has a non-trivial $(p-1)$-cycle in homology.  The theorem then follows since that cycle cannot be a boundary in a $(p-1)$-dimensional simplicial complex.  For $p=2$, the claim follows from Theorem \ref{IrregularMoore}.

Suppose that $f_{p-1}(\K) > C_{p,r}n^{p-\frac{4r-4}{r2^p}}$ for sufficiently large $C_{p,r}$ independent of $n$.  Then since every $(p-1)$-face contains $p$ faces with $p-2$ vertices each, $$\sum_{F \in \K, |F|=p-1}f_0(\lk_\K(F)) > pC_{p,r}n^{p-\frac{4r-4}{r2^p}} \Rightarrow$$ $$\avg_{F \in \K, |F|=p-1}f_0(\lk_\K(F)) > pC_{p,r}n^{p-\frac{4r-4}{r2^p}}f_{p-2}^{-1}.$$ Take $s(\Delta)$ to be the number of pairs of vertices in $\Delta$ that are not joined by an edge, as in the proof of Theorem \ref{generalflag}.  Since $\K$ has dimension $p-1$, the link of a $(p-2)$ face contains no edges, and by $\avg_{t \in T}(t^2) \geq (\avg_{t \in T})^2$, we have $$\avg_{F \in \K, |F|=p-1}s(\lk_\K(F)) > Cn^{2p-\frac{4r-4}{r2^{p-1}}}f_{p-2}^{-2} \Rightarrow $$ $$\sum_{F \in \K, |F|=p-1}s(\lk_\K(F)) > Cn^{2p-\frac{4r-4}{r2^{p-1}}}f_{p-2}^{-1}$$ for a value $C$ that can be chosen arbitrarily large by choosing $C_{p,r}$ sufficiently large. Since there are ${n \choose 2}$ pairs of vertices in $V(\K)$ and $f_{p-2}(\K) \leq {n \choose p-1}$, there exist vertices $u$ and $v$ such that there exists a set $P$ of $Cn^{p-1-\frac{4r-4}{r2^{p-1}}}$ $(p-2)$-faces whose links contain $u$ and $v$.  Let $\K'$ be the simplicial complex with maximal faces given by $P$.  By the inductive hypothesis, for sufficiently large $C$ there exists $W' \subset V(\K')$ such that $|W'| \leq 2p+2r-6$ and $\K'[W']$ has a non-trivial $(p-2)$-cycle in homology.  Then $\K[W',\{u,v\}]$ contains the suspension of $\K'[W]$ and therefore has a non-trivial $(p-1)$-cycle in homology, and thus $\g_{p-1}(\K) \leq 2p+2r-4$.
\end{proof}

\begin{problem}
Improve the exponent in the bound of Conjecture \ref{GenConjecture} (or a special case of it), or give an example to show that such improvement is impossible.  Also, what specific values of $C_{p,r,i}$ can be given?
\end{problem}

\begin{problem}
What bounds on face numbers are possible if there are several girth hypotheses, say if $\g_1$ and $\g_2$ are given?
\end{problem}

\section{Existence of complexes with high girth and many faces}
\label{Existence}

Although we do not have general examples to prove that the bounds in the above theorems are tight, we have some examples of simplicial complexes satisfying the hypotheses of Conjecture \ref{GenConjecture} and having $f_{p-1}$ large.

\begin{theorem}
For each $p \geq 2$ there exists a constant $C_p$ such that for arbitrarily large $n$, there exists a $(p-1)$-dimensional simplicial complex $\K$ with $n$ vertices satisfying $\g_{p-1}(\K) > 2p$ and $f_{p-1}(\K) \geq C_p n^{p-\frac{p}{2^p-1}}$.
\end{theorem}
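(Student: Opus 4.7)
The plan is to run a probabilistic alteration argument on a balanced $p$-partite base complex; this choice is what keeps the boundary of the $p$-simplex from ever appearing and thereby lets the argument reach the exponent $p/(2^p-1)$ for every $p$. Partition $V$ into $p$ parts $V_1,\ldots,V_p$ of sizes $\lfloor n/p\rfloor$ or $\lceil n/p\rceil$, let $\K_0$ be the complete $p$-partite $(p-1)$-dimensional complex whose $(p-1)$-faces are the transversals $\{v_1,\ldots,v_p\}$ with $v_i \in V_i$, and form $\K \subseteq \K_0$ by including each $(p-1)$-face of $\K_0$ independently with probability $q := c\, n^{-p/(2^p-1)}$, where $c>0$ is a small constant depending only on $p$.

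The key structural step, which I expect to be the main technical point, is to identify the only obstruction to $\g_{p-1}(\K) > 2p$ as a \emph{complete cross-polytope}: a choice of two vertices from each $V_i$ together with all $2^p$ transversals they span. Since $\K$ is $(p-1)$-dimensional, $\lk_\K(F)$ has dimension strictly less than $p-1$ whenever $F \neq \emptyset$ and so contributes no $(p-1)$-homology, reducing the task to controlling $\tilde H_{p-1}(\K[W];\field)$ for $W \subseteq V$ with $|W| \leq 2p$. Writing $k_i := |W \cap V_i|$, the base subcomplex $\K_0[W]$ is the join $W_1 * \cdots * W_p$ of $p$ discrete vertex sets, and iterated application of the Künneth formula for joins yields
\[
\tilde H_{p-1}(\K_0[W];\field) \;\cong\; \bigotimes_{i=1}^p \tilde H_0(W_i;\field) \;\cong\; \field^{\prod_i (k_i-1)},
\]
with all other reduced homology vanishing. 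This is nonzero iff every $k_i \geq 2$, and combined with $\sum_i k_i \leq 2p$ this forces $k_i = 2$ for each $i$; the unique nontrivial class is then represented by the sum of all $2^p$ transversals on $W$. Because $\K[W]$ is itself $(p-1)$-dimensional, its top cycles are exactly those cycles of $\K_0[W]$ that happen to be supported on faces present in $\K$, so $\tilde H_{p-1}(\K[W];\field) \neq 0$ precisely when $W$ has two vertices in each $V_i$ and all $2^p$ transversals on $W$ lie in $\K$. In particular $\partial \Delta_p$ can never appear as a subcomplex of $\K_0$—any set of $p+1$ vertices fails the $k_i \geq 2$ condition for some $i$—which is the feature that lets the $p$-partite base bypass the $1/p$ constraint that would otherwise bottleneck a naive random construction on all of $V$.

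Granted this identification, the remainder is standard first-moment alteration. By linearity, $\mathbb{E}[f_{p-1}(\K)] = (n/p)^p q$ and the expected number of complete cross-polytopes in $\K$ is at most $\binom{\lfloor n/p \rfloor}{2}^p q^{2^p}$; plugging in $q = c\, n^{-p/(2^p-1)}$, the ratio of the second quantity to the first is $O(c^{2^p-1})$, which is less than $1/2$ once $c$ is chosen small enough in terms of $p$. Thus some realization of $\K$ satisfies
\[
f_{p-1}(\K) - \#\{\text{complete cross-polytopes in }\K\} \;\geq\; \tfrac{1}{2}(n/p)^p q \;=\; \Omega\bigl(n^{p - p/(2^p-1)}\bigr).
\]
Fix such a $\K$, delete one $(p-1)$-face from each complete cross-polytope in it, and call the result $\K'$: every complete cross-polytope that was in $\K$ now has a missing face, and removing $(p-1)$-faces from a $(p-1)$-dimensional complex can only shrink the space of $(p-1)$-cycles, so no new bad subcomplex arises. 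Therefore $\K'$ satisfies $\g_{p-1}(\K') > 2p$ and $f_{p-1}(\K') \geq C_p\, n^{p - p/(2^p-1)}$ for an appropriate constant $C_p$.
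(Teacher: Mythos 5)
Your proposal is correct and follows essentially the same approach as the paper: a random balanced $p$-partite construction, identification of cross-polytope boundaries as the sole obstruction to $\g_{p-1}(\K) > 2p$, and a first-moment alteration with $q$ tuned to $n^{-p/(2^p-1)}$. The only stylistic difference is that you justify the reduction via the K\"unneth formula for joins and explicitly note that nonempty links contribute no $(p-1)$-homology, whereas the paper uses the more elementary ``if $|W \cap V_i| \leq 1$ then every top face is a cone over $v$'' observation; these are the same idea dressed differently.
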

\begin{proof}
Without loss of generality, we assume that $n/p$ is an integer.  Let $V$ be a set of $n$ vertices, partitioned into sets $V_1, \ldots, V_p$, each of size $n/p$.  Construct a random simplicial complex $\K'$ as follows.  All faces with cardinality at most $p-1$ and at most one element of each of the $V_i$ are faces in $\K'$, and every cardinality $p$ set with exactly one element from each $V_i$ is added to $\K'$ independently with probability $a$ for some $0 < a < 1$.  A $(p-1)$-dimensional simplicial complex $\Delta$ whose vertex set can be partitioned into $V_1, \ldots, V_p$ such that every face of $\Delta$ contains at most one element of each of the $V_i$ is called \textit{balanced} (the definition of a balanced complex usually requires that all maximal faces have the same cardinality; we do not make this assumption).  By construction, $\K'$ is balanced.

We check that if $\Delta$ is balanced and $(p-1)$-dimensional, then $\g_{p-1}(\Delta) \leq 2p$ only if there is a set of vertices $W=\cup_{i=1}^p \{v_{i,0}, v_{i,1}\}$ with $\{v_{i,0}, v_{i,1}\} \subseteq V_i$ such that $\{v_{1,j_i},v_{2,j_2},\ldots,v_{p,j_p}\} \in \Delta$ for all $(j_1,\ldots,j_p) \in \{0,1\}^p$.  In this case, $\Delta[W]$ is the boundary of a $p$-dimensional cross-polytope.  Suppose that $\g_{p-1}(\Delta) \leq 2p$.  Let $W \subset V$ be of minimal size such that $\tilde{H}_{p-1}(\Delta[W];\field) \neq 0$.  If for some $i$, $W \cap V_i$ is a single vertex $\{v\}$, then all $(p-1)$-faces of $\Delta[W]$ contain $v$ and therefore $\tilde{H}_{p-1}(\Delta[W];\field) = 0$.  We conclude that $|W \cap V_i| = 2$ for all $i$, and therefore $\K[W]$ is contained in the boundary of a $p$-dimensional cross-polytope.  The claim then follows.

If $v_{i,0},v_{i,1} \in V_i$ for $1 \leq i \leq p$, then the probability that $\K'[v_{1,0},v_{1,1},\ldots,v_{p,0},v_{p,1}]$ is the boundary of a $p$-dimensional cross-polytope is $a^{2^p}$.  Hence the expected number of such boundaries in $\K'$, denoted $E(\cp_p(\K))$, is ${n/p \choose 2}^{p}a^{2^p}$.  Also, the expected number of $(p-1)$-faces of $\K$ is $E(f_{p-1}(\K)) = a(n/p)^p$.  By linearity of expectation, $$E(f_{p-1}(\K)-\cp_p(\K)) = a(n/p)^p-{n/p \choose 2}^{p}a^{2^p}.$$ By choosing $a = C'_pn^{-\frac{p}{2^p-1}}$ for $C'_p$ sufficiently small and independent of $n$, there exists $\K'$ such that $f_{p-1}(\K)-\cp_p(\K) \geq C_p n^{p-\frac{p}{2^p-1}}$ for some $C_p$ independent of $n$.

Let $F_1, \ldots, F_{\cp_p(\K')}$ be a collection (possibly containing duplicates) of $(p-1)$-faces of $\K'$ such that every boundary of a $p$-dimensional cross-polytope in $\K$ contains some $F_i$.  Construct $\K$ from $\K'$ by removing all the $F_i$.  Then $\g_{p-1}(\K) > 2p$ and $f_{p-1}(\K) \geq C_p n^{p-\frac{p}{2^p-1}}$.
\end{proof}

We also consider the existence of $2$-dimensional simplicial complexes with both $f_2$ and $\g_2$ large.  For that we first need a technical lemma.
\begin{lemma}
\label{MinSizeHom}
Suppose that $\K$ is a balanced, two-dimensional simplicial complex.  Let $W \subset V(\K)$ be a minimal set such that $\tilde{H}_2(\K[W];\field) \neq 0$, i.e. if $W' \subsetneq W$, then $\tilde{H}_2(\K[W'];\field) = 0$.  Then $f_2(\K[W]) \geq 2|W|-4$.
\end{lemma}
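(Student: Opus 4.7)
The plan is to pass to a non-zero $2$-cycle of inclusion-minimal support and bound the Euler characteristic of its support.

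First I would choose $z \in Z_2(\K[W];\field)$ a non-zero $2$-cycle whose support $S := \{F : z(F) \neq 0\}$ has minimum cardinality, and let $\Delta$ be the sub-$2$-complex of $\K[W]$ whose $2$-faces are exactly $S$ and whose $1$-faces are the edges of $\K[W]$ contained in some triangle of $S$. The minimality of $W$ implies that any non-zero $2$-cycle in $\K[W]$ must involve every vertex of $W$ (otherwise $z$ would live in $C_2(\K[V(z)])$ for some $V(z) \subsetneq W$, contradicting $\tilde{H}_2(\K[V(z)];\field) = 0$), so the vertex set of $\Delta$ is precisely $W$. Since $f_2(\K[W]) \geq f_2(\Delta)$, it suffices to prove $f_2(\Delta) \geq 2|W|-4$.

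Next I would deduce two structural facts from the minimality of $S$. First, $\Delta$ is connected: a vertex-disjoint splitting $\Delta = \Delta_1 \sqcup \Delta_2$ would give $z = z_1 + z_2$ with $z_i$ supported on the triangles of $\Delta_i$, and because the edge sets of $\Delta_1, \Delta_2$ are disjoint, $\partial z = 0$ forces $\partial z_1 = \partial z_2 = 0$, so one of $z_1, z_2$ is a non-zero $2$-cycle of strictly smaller support. Second, $\dim Z_2(\Delta;\field) = 1$: given any $y \in Z_2(\Delta;\field)$ that is not a scalar multiple of $z$, pick $F_0 \in S$ with $y(F_0) \neq 0$ (which exists because $y \neq 0$ and $\mathrm{supp}(y) \subseteq S$) and set $c = z(F_0)/y(F_0)$; then $z - cy$ is a non-zero $2$-cycle whose support is contained in $S \setminus \{F_0\}$, contradicting the choice of $z$. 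Since $\Delta$ is $2$-dimensional, $B_2(\Delta;\field) = 0$, so $\beta_2(\Delta) = 1$.

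Finally I would combine these facts with standard face counting. The cycle equation $\partial z = 0$ forces every edge of $\Delta$ to lie in at least two triangles of $\Delta$, hence $2 f_1(\Delta) \leq 3 f_2(\Delta)$. Using $\beta_0(\Delta) = 1$ and $\beta_2(\Delta) = 1$, the Euler characteristic satisfies $\chi(\Delta) = 2 - \beta_1(\Delta) \leq 2$, while
\[
\chi(\Delta) = f_0(\Delta) - f_1(\Delta) + f_2(\Delta) \geq |W| - \tfrac{1}{2} f_2(\Delta).
\]
Combining the two yields $f_2(\Delta) \geq 2|W| - 4$, as required.

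The only step requiring care is the verification that $\dim Z_2(\Delta;\field) = 1$; but this is a purely linear-algebraic consequence of the minimum-support choice and works over any field. Notably, the balanced hypothesis on $\K$ plays no essential role in this argument, although it is consistent with the setting in which the lemma is applied.
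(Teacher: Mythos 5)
Your proof is correct and takes a genuinely different route from the paper. The paper exploits the balanced hypothesis directly: it partitions $W$ into color classes $X,Y,Z$, reduces the claim to a bound on the number $t$ of edges of the bipartite graph $\K[X,Y]$ (after observing each such edge lies in at least two triangles), shows $\K[X,Y]$ is connected, computes $\dim_\field \tilde{H}_1(\K[X,Y];\field)$ via Euler--Poincar\'e, and then uses a Mayer--Vietoris argument on the filtration $\K[X,Y] \subset \K[X,Y,\{z_1\}] \subset \cdots$ to bound $|Z|$ by that Betti number plus one. Your approach instead replaces the set-theoretic minimality of $W$ with the stronger linear-algebraic minimality of the support of a $2$-cycle, deduces $\beta_0(\Delta)=\beta_2(\Delta)=1$ and the edge-in-two-triangles property purely from the cycle condition, and finishes with a one-line Euler characteristic estimate $\chi(\Delta) \le 2$ combined with $2f_1 \le 3f_2$. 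Both proofs hinge on the same two combinatorial facts (connectedness and each edge lying in at least two $2$-faces), but you avoid Mayer--Vietoris entirely and, more strikingly, never invoke balancedness; your argument therefore proves the lemma for \emph{arbitrary} two-dimensional complexes. This is a genuine strengthening of the statement, and the paper's use of the balanced hypothesis is, in hindsight, an artifact of the chosen proof strategy rather than an essential assumption (balancedness of $\K$ is of course still needed elsewhere in the probabilistic construction, to control where open $2$-cycles can arise). Your presentation is complete; the one place a reader might pause is the claim that every vertex of $W$ lies in $\Delta$, but you justify it correctly by noting that otherwise $z$ would be a nonzero element of $Z_2(\K[W'];\field)$ for a proper subset $W'$, which is impossible since $B_2$ vanishes in dimension two.
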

\begin{proof}
We partition $W$ into $X,Y,Z$ so that every face of $\K[W]$ contains at most one vertex from each of $X,Y,Z$.  If $F$ is a face of $\K[W]$, construct $\K[W]-F$ by removing $F$ and all faces that contain $F$ from $\K[W]$.  If $\K[W]$ contains a face $F$ such that $\tilde{H}_2(\K[W]-F;\field) \neq 0$, then it suffices to prove that $f_2(\K[W]-F) \geq 2|W|-4$. Hence, we may assume without loss of generality that if any face of $\K[W]$ is removed to create $\Delta$, then $\tilde{H}_2(\Delta;\field)=0$.

$\K[X,Y]$ is a bipartite graph, say with $t$ edges.  If $xy$ is an edge in $\K[X,Y]$, then there are at least two vertices $z_1, z_2 \in Z$ such that $xyz_1, xyz_2 \in \K$; otherwise $xy$ could be removed without changing $\tilde{H}_2(\K[W];\field)$, a contradiction.  Therefore, it suffices to show that $t \geq |W|-2$.

First we show that $\K[X,Y]$ is connected.  Let $W_1, \ldots, W_s$ be the vertex sets of the connected components of $\K[X,Y]$.  $\K[W] = \cup_{j=1}^s \K[W_j,Z]$, and $\K[W_i,Z] \cap (\cup_{j=1}^{i-1} \K[W_j,Z]) = \K[Z]$, which is a set of isolated vertices and hence has vanishing first homology.  It follows from induction on $i$ and the Mayer-Vietoris sequence that $\tilde{H}_2(\K[W];\field) = \oplus_{j=1}^s \tilde{H}_2(\K[W_j,Z];\field)$.  Hence, by the minimality assumption, $s=1$ and $\K[X,Y]$ is connected.

It follows from the Euler-Poincar\'e formula that $\dim_\field(\tilde{H}_1(\K[X,Y];\field)) = t-|X|-|Y|+1$.  Hence, it suffices to show that $|Z| \leq \dim_\field(\tilde{H}_1(\K[X,Y];\field))+1$, which would then imply that $t \geq |X|+|Y|+|Z|-2 = |W|-2$.  Let $Z = \{z_1,\ldots,z_r\}$.  For $1 \leq i \leq r$, it must be that $\tilde{H}_1(\lk_{\K[W]}(z_i);\field) \neq 0$, or else $\tilde{H}_2(\K[W-\{z_i\}];\field) = \tilde{H}_2(\K[W];\field)$, a contradiction to the minimality assumption.  Let $b(i)$ be the dimension over $\field$ of the image of $\tilde{H}_1(\K[X,Y];\field)$ under the map on homology induced by the inclusion of $\K[X,Y]$ into $\K[X,Y,\{z_1,\ldots,z_i\}]$.  Consider a nonzero cycle $C \in \tilde{H}_1(\lk_{\K[W]}(z_i))$, and consider the Mayer-Vietoris sequence on $U = \K[X,Y,\{z_i\}]$ and $W = \K[X,Y,\{z_1,\ldots,z_{i-1}\}]$ with $U \cup W = \K[X,Y,\{z_1,\ldots,z_i\}]$ and $U \cap W = \K[X,Y]$.  If $C \neq 0$ in $\tilde{H}_1(\K[X,Y,\{z_1,\ldots,z_{i-1}\}])$, then $b(i) \leq b(i-1)-1$.  Otherwise, $\tilde{H}_2(\K[X,Y,\{z_1,\ldots,z_i\}];\field) \neq 0$, which implies that $i=r$.  Then $b(0) \geq r-1$, which proves the lemma.
\end{proof}

\begin{theorem}
There exists an absolute constant $C$ such that, for arbitrarily large $n$ and $k \leq n$, there exists a two-dimensional simplicial complex $\K$ with $n$ vertices satisfying $\g_2(\K) > k$ and $f_2(\K) \geq Cn^{5/2}k^{-3/2}$.
\end{theorem}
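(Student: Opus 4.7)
The plan is an alteration argument modeled on the proof of Theorem \ref{Crosspolytopes} earlier in this section, with Lemma \ref{MinSizeHom} as the structural input. Assume $3 \mid n$ and partition the vertex set $V$ into three equal parts $V_1, V_2, V_3$. Let $\K'$ be the random balanced two-dimensional complex on $V$ containing all transversal edges (pairs with endpoints in distinct $V_i$) together with each transversal 2-face $\{v_1, v_2, v_3\}$ (one vertex per $V_i$) included independently with probability $p := c\,n^{-1/2}k^{-3/2}$, for a small absolute constant $c > 0$ to be chosen. Then $E[f_2(\K')] = p(n/3)^3 = (c/27)\, n^{5/2} k^{-3/2}$, which is the target order of magnitude.

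The next step is to bound the expected number of minimally bad vertex subsets in $\K'$. By Lemma \ref{MinSizeHom}, every minimally bad $W$ (one with $\tilde{H}_2(\K'[W];\field) \neq 0$ but no proper subset having this property) of size $j$ must contain at least $2j-4$ of the at most $\lfloor j^3/27\rfloor$ transversal 2-faces supported on $W$. Therefore
\[
E[\#\{\text{minimally bad }W:\ |W|=j\}]\ \leq\ \binom{n}{j}\binom{\lfloor j^3/27\rfloor}{2j-4}\,p^{2j-4},
\]
and Stirling-type simplification together with the substitution for $p$ reduces this, for $6 \leq j \leq k$, to a bound of the form $e^{j}(ec/C_0)^{2j-4}\,n^2 k^{-2}$ with an absolute $C_0 > 0$. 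The ratio of successive terms is bounded by a constant multiple of $c^2$, so the sum over $j$ converges geometrically and gives $E[\#\text{minimally bad }W,\ |W| \leq k] \leq C_1\, c^8\,n^2 k^{-2}$ for an absolute constant $C_1$.

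The alteration is carried out iteratively: while $\K$ still has a bad vertex set of size at most $k$, choose a minimally bad $W$ in the current complex and delete one 2-face in the support of a nonzero class of $\tilde{H}_2(\K[W];\field)$; this strictly decreases the integer potential $\Phi := \sum_{|V'|\leq k} \dim_\field \tilde{H}_2(\K[V'];\field)$ (and never increases $\tilde{H}_2$ of any subcomplex). The process therefore terminates, with $\g_2(\K) > k$ at termination, and uses at most $\Phi_0$ deletions. Using Lemma \ref{MinSizeHom} to trace each independent $2$-cycle class in $\tilde{H}_2(\K'[V'];\field)$ back to a minimally bad subset of $V'$ with $\geq 2|W|-4$ faces, and summing over pairs $(W \subseteq V',\,|V'| \leq k)$ with appropriate weighting, one obtains $E[\Phi_0] = O(n^2 k^{-2})$. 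Since $n^2 k^{-2} = o(n^{5/2}k^{-3/2})$ whenever $(nk)^{1/2} \to \infty$, with positive probability the altered complex $\K$ satisfies $f_2(\K) \geq C\, n^{5/2} k^{-3/2}$ for an absolute constant $C > 0$ and $\g_2(\K) > k$.

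The main obstacle is obtaining the bound on $E[\Phi_0]$ cleanly: the trivial estimate $\dim \tilde{H}_2(\K'[V']) \leq f_2(\K'[V'])$ is much too lossy once $V'$ is considerably larger than a minimally bad configuration. One must instead show that every nonzero class in $\tilde{H}_2(\K'[V'];\field)$ is accounted for by some minimally bad $W \subseteq V'$ (to which Lemma \ref{MinSizeHom} applies), and then carefully manage the overcounting that arises because a single minimally bad $W$ is contained in many supersets $V' \supset W$ of size at most $k$. Once this counting is in place, the rest of the argument is a direct application of the alteration method.
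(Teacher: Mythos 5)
Your setup matches the paper's exactly: the same balanced tripartite random model with $p \asymp n^{-1/2}k^{-3/2}$, and the same structural input (Lemma \ref{MinSizeHom}) converting $\tilde{H}_2$-freeness on small vertex sets into a 2-face-count condition. Your first-moment estimate on minimally bad $W$ of size $j$,
\[
\binom{n}{j}\binom{\lfloor j^3/27\rfloor}{2j-4}p^{2j-4},
\]
is the same quantity the paper denotes $E(t_i)$, and the ratio/geometric-sum reasoning is sound for $j \leq k$. (You also omit the reduction to $k \lesssim n^{1/3}$ via the cone over $K_{n-1}$, which the paper needs to make the sum over $j$ have few enough terms; this is a minor, fixable omission.)

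The genuine gap is the claim $E[\Phi_0] = O(n^2 k^{-2})$ with $\Phi := \sum_{|V'|\leq k}\dim_\field \tilde{H}_2(\K[V'];\field)$. This is false: for a two-dimensional complex, the map $\tilde{H}_2(\K'[W]) \to \tilde{H}_2(\K'[V'])$ induced by inclusion is injective (cycles are unions of $2$-faces and cannot become boundaries), so a single minimally bad $W$ of size $j$ contributes $\geq 1$ to $\dim \tilde{H}_2(\K'[V'])$ for every superset $V' \supseteq W$ with $|V'| \leq k$, and there are on the order of $\binom{n-j}{k-j}$ such supersets. Thus $E[\Phi_0]$ is super-polynomial in $n$ for $k$ growing, not $O(n^2k^{-2})$. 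You recognize that the trivial bound is lossy and gesture at ``carefully managing the overcounting,'' but no such management is supplied, and the statement as written does not hold. So bounding the number of deletions by $\Phi_0$ does not give anything usable.

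The fix is easy and is essentially what the paper does, so you should abandon $\Phi_0$. Each iteration of your alteration finds a minimally bad $W$ in the current complex and, by Lemma \ref{MinSizeHom}, a set $\mathcal{T}$ of $2|W|-4$ two-faces of the current complex supported on $W$; these are all faces of $\K'$, so $(W,\mathcal{T})$ lies in the family $\mathcal{T}^*$ of such pairs computed in $\K'$. Because the face you delete at that step lies in $\mathcal{T}$ and is absent from every later iterate, the pairs $(W,\mathcal{T})$ produced across iterations are pairwise distinct. Hence the process does at most $|\mathcal{T}^*|$ deletions, and $E[|\mathcal{T}^*|]$ is precisely the sum $\sum_j \binom{n}{j}\binom{\lfloor j^3/27\rfloor}{2j-4}p^{2j-4}$ you already control. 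The paper sidesteps even this bookkeeping by a one-shot alteration: it deletes one (arbitrary) face from every $\mathcal{T}\in\mathcal{T}^*$ up front, after which no $|W|\leq k$ can contain $2|W|-4$ two-faces, and Lemma \ref{MinSizeHom} immediately yields $\g_2(\K) > k$. Either of these routes closes your gap; the potential-function route does not.
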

\begin{proof}
We may assume that $k \leq (2C+\mu)^{2/3}n^{1/3}$ for some fixed $\mu$, since there exists $\K$ with $\g_2(\K) = \infty$ and $f_2(\K) = {n-1 \choose 2}$.  Such a $\K$ can be constructed by taking the cone over a complete graph.

We use probabilitistic methods to construct an intermediate simplicial complex $\K'$ and then $\K$ with the claimed properties as follows.  Partition $V(\K) = V(\K')$ into $X,Y,$ and $Z$, each of size $n/3$, and let $0 < a < 1$ be a real number.  For all $x \in X, y \in Y, z \in Z$, $xy,xz,yz$ are edges in $\K'$, and $xyz$ is a face in $\K'$ with probability $a$, chosen independently of all other faces.  For all $W \subset V(\K')$ with $|W| \leq k$, let $\mathcal{T}^*_W$ be the set of all $(2|W|-4)$-subsets of $2$-faces of $\K'$ that are contained in $W$, and define $\mathcal{T}^* := \cup_{|W| \leq k}\mathcal{T}^*_W$.  Then define a function $T: \mathcal{T}^* \rightarrow \K'$ by choosing $T(\mathcal{T}) \in \mathcal{T}$ arbitrarily for all $\mathcal{T} \in \mathcal{T}^*$.  Construct $\K$ by deleting $T(\mathcal{T})$ from $\K'$ for all $\mathcal{T} \in \mathcal{T}^*$.

If $f_2(\K[W]) \geq 2|W|-4$ for some $|W| \leq k$, then let $\mathcal{T}$ be a set of $2|W|-4$ $2$-faces of $\K[W]$.  Then some face of $\mathcal{T}$ should have been deleted in the construction of $\K$, a contradiction.  Since $\K$ is balanced, and $f_2(\K[W]) < 2|W|-4$ for all $W$ with $|W| \leq k$, we conclude by Lemma \ref{MinSizeHom} that $\g_2(\K) > k$.  Next we show that by choosing $a = \epsilon n^{-1/2} k^{-3/2}$ for an appropriate value of $\epsilon$ independent of $n$ or $k$, $E(f_2(\K)) \geq Cn^{5/2}k^{-3/2}$.

For any value of $a$, $E(f_2(\K')) = a(n/3)^3 = an^3/27$.  Also, $f_2(\K) \geq f_2(\K')-|\mathcal{T}^*|$.  If we show that for all $a \leq \epsilon n^{-1/2} k^{-3/2}$, $E(|\mathcal{T}^*|) \leq an^3/54$, then it follows by linearity of expectation that $E(f_2(\K)) \geq an^3/54$.  Thus, there exists some $\K$ with $f_2(\K) \geq an^3/54$, which proves the theorem.

Let $t_i = |\{\mathcal{T} \in \mathcal{T}^*: |\mathcal{T}| = 2i-4\}|$.  Then $|\mathcal{T}^*| = \sum_{i=1}^k t_i$, and it suffices to show that if $a \leq \epsilon n^{-1/2} k^{-3/2}$, then $E(t_i) \leq C_0an^{5/2}$, for some absolute constant $C_0$, by $k < (2C+\mu)^{2/3}n^{1/3}$.  Since there are ${n \choose i}$ sets $W$ such that $|W|=i$, and for such a $W$ there are ${{i \choose 3} \choose 2i-4}$ sets $\mathcal{T}$ of size $(2i-4)$ of $3$-subsets of $W$, we have $$E(t_i) \leq a^{2i-4}{{i \choose 3} \choose 2i-4}{n \choose i}.$$

We need to verify that $$a^{2i-4}{{i \choose 3} \choose 2i-4}{n \choose i} \leq C_0an^{5/2},$$ which follows from $$a^{2i-4}{{i^3/6} \choose 2i-4}\frac{n^i}{i!} \leq C_0an^{5/2}.$$  By Stirling's approximation, this follows from $$a^{2i-5} \leq n^{-i+5/2}i^{(-6i+12)+(2i-4)+i}e^{-2i+4}e^{-i}6^{2i-4},$$ or for an appropriate constant $\epsilon$, $a \leq \epsilon n^{-1/2}i^{\frac{-3i+8}{2i-5}}$.  Since $i \leq k$, this follows from $a \leq \epsilon n^{-1/2} k^{-3/2}$, proving the result.
\end{proof}

The Ramanujan graphs of \cite{RGraphs} are examples of graphs with large girth and many edges.  The Ramanujan complexes of \cite{RComp} also have many faces and high girth, although under a definition of girth that is different from what we use.  Perhaps these constructions can be adapted to our setting to prove that the bounds of Conjecture \ref{GenConjecture} are, at least in some cases, tight.

\section{Connections with the multiplicity conjecture}
We conclude our study of Moore bounds by noting the connection with commutative algebra, and in particular the multiplicity conjecture.

Consider the polynomial ring $S$ over a field $\field$ generated by variables $x_1,\ldots,x_n$.  With every simplicial complex $\K$ we associate its \textit{Stanley-Reisner ideal} $I_{\K} \subset S$ generated by non-faces of $\K$: $I_\K := (\prod_{x_i \in L}x_i: L \subset V, L \not\in \K)$ (see \cite{St96}) and its \textit{Stanley-Reisner ring} $\field[\K] := S/I_\K$.

If $I$ is a graded ideal of $S$, then we construct a graded minimal free resolution of $S/I$ as an $S$-module.

\[ 0 \rightarrow
\bigoplus_{j \in \mathbb{Z}} S(-j)^{\beta_{l,j}} \rightarrow \ldots \rightarrow \bigoplus_{j \in \mathbb{Z}} 
S(-j)^{\beta_{1,j}} \rightarrow S \rightarrow S/I \rightarrow 0. \] 
In the above expression, $S(-j)$ denotes $S$ with grading shifted by $j$, and $l$ denotes the length of the resolution. In particular, $l \geq \codim(S/I)$.  The numbers $\beta_{i,j}$ are called the \textit{algebraic Betti numbers} of $I$.

We define a set of quantitites $\tilde{g}_{p-1}$ of a $(d-1)$-dimensional simplicial complex $\K$ in terms of the resolution.  First define the \textit{maximal shifts} of $\K$ as the largest indices of nonvanishing Betti numbers: $M_i(\K):=\max\{j : \beta_{i,j}\neq 0\}$.  The first $\codim(\field[\K])=n-d$ maximal shifts of $\K$ are strictly increasing, and so there are $d$ integers $1 \leq Q_0 < Q_1 < \ldots < Q_{d-1} \leq n$ that are not among the first $n-d$ maximal shifts of $\K$.  For $1 \leq p \leq d$, we define $\tilde{g}_{p-1}(\K) := Q_{p-1}(S/I_\K)+1$.

Our definition of the girths of a simplicial complex is closely related to $\tilde{g}$.  In the following lemma, we make use of Hochster's formula (see \cite[Theorem II.4.8]{St96}), which states that $$\beta_{i,j}(\field[\K]) = \sum_{|W|=j} \dim_{\field} \left(\tilde{H}_{|W|-i-1}(\K[W];\field) \right).$$

\begin{lemma}
\label{GirthLink}
Let $\K$ be a $(d-1)$-dimensional simplicial complex.  Then $\tilde{g}_{p-1}(\K) = \min \{n-d+p+1,\g_{p-1}(\K)\}$ for all $p$.
\end{lemma}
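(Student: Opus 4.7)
The plan is to reduce the identity to a statement about the maximal shifts $M_i$ and then to prove two matching inequalities separately.

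First I would dispose of the trivial bound $\tilde{g}_{p-1}(\K)\leq n-d+p+1$: since $Q_0<Q_1<\cdots<Q_{d-1}$ are $d$ distinct integers in $\{1,\ldots,n\}$, necessarily $Q_{p-1}\leq n-d+p$. Second, I would record a combinatorial reformulation: for $1\leq m-p\leq n-d$, the inequality $\tilde{g}_{p-1}(\K)\leq m$ is equivalent to $M_{m-p}(\K)\geq m$. This is a direct count, using that the first $n-d$ maximal shifts are strictly increasing, so that the number of $M_i$'s in $\{1,\ldots,m-1\}$ equals the largest index $i\leq n-d$ with $M_i\leq m-1$, and $Q_{p-1}\leq m-1$ amounts to this count being at most $m-1-p$.

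For the inequality $\tilde{g}_{p-1}(\K)\leq\g_{p-1}(\K)$ when $m:=\g_{p-1}(\K)\leq n-d+p$, I would pick $F\in\K$ and $W\subset V(\K)\setminus F$ with $|W|=m$ and $\tilde{H}_{p-1}(\lk_\K(F)[W];\field)\neq 0$, and peel off the vertices of $F$ one at a time via Mayer-Vietoris. At a generic step with current data $(F',A)$ (where $F'\subseteq F$ is the remaining face and $A\subseteq F\setminus F'$ collects the previously absorbed vertices), pick $v\in F'$ and apply the Mayer-Vietoris sequence inside $\Delta=\lk_\K(F'\setminus\{v\})[W\cup A\cup\{v\}]$. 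One checks that $\lk_\Delta(v)=\lk_\K(F')[W\cup A]$ is the current complex carrying the class, and that $\astar_\Delta(v)=\lk_\K(F'\setminus\{v\})[W\cup A]$, so the nonvanishing class either (A) lifts to degree one higher in $\lk_\K(F'\setminus\{v\})[W\cup A\cup\{v\}]$ (move $v$ out of the face side, raising the degree by one) or (B) lifts to the same degree in $\lk_\K(F'\setminus\{v\})[W\cup A]$ (discard $v$). After $|F|$ peelings, $F'$ has vanished and we arrive at $\tilde{H}_{p-1+|A|}(\K[W\cup A];\field)\neq 0$ for some $A\subseteq F$. Hochster's formula then forces $\beta_{m-p,\,m+|A|}(\K)\neq 0$, hence $M_{m-p}\geq m+|A|\geq m$, as required.

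For the reverse inequality $\g_{p-1}(\K)\leq\tilde{g}_{p-1}(\K)$ when $j:=\tilde{g}_{p-1}(\K)\leq n-d+p$, the reformulation gives $M_{j-p}\geq j$, and Hochster's formula then supplies some $W\subset V(\K)$ with $|W|=M_{j-p}$ and $\tilde{H}_{M_{j-p}-j+p-1}(\K[W];\field)\neq 0$, which witnesses $\g_{M_{j-p}-j+p-1}(\K)\leq M_{j-p}$. Applying Lemma~\ref{IncreasingGirths} a total of $M_{j-p}-j$ times telescopes this to $\g_{p-1}(\K)\leq M_{j-p}-(M_{j-p}-j)=j$. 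Combining the three inequalities yields $\tilde{g}_{p-1}(\K)=\min\{n-d+p+1,\g_{p-1}(\K)\}$.

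The main obstacle I anticipate is the iterated Mayer-Vietoris peeling in the forward direction: at each step one has to verify that the link of the freshly chosen $v$, computed inside the correct ambient complex $\lk_\K(F'\setminus\{v\})[W\cup A\cup\{v\}]$, really coincides with the previous link-restriction complex, and the running bookkeeping of homological degree against vertex count must land exactly in the bidegree $(m-p,\,m+|A|)$ demanded by Hochster's formula. The remaining ingredients are either a routine count on the shifts or a direct telescoping of Lemma~\ref{IncreasingGirths}.
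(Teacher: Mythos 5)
Your proposal is correct and follows essentially the same strategy as the paper: a Mayer--Vietoris peeling argument in each direction combined with Hochster's formula. The only cosmetic differences are that you route the reverse inequality $\g_{p-1}(\K)\leq\tilde{g}_{p-1}(\K)$ through Lemma~\ref{IncreasingGirths} rather than repeating the Mayer--Vietoris step inline, and that you spell out the counting translation between $\tilde{g}_{p-1}$ and the maximal shifts $M_i$, a step the paper compresses into ``the result then follows by Hochster's formula.''
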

\begin{proof}
We show that $$\g_{p-1}(\K) = g'_{p-1}(\K) := \min \left\{|W|-j: \tilde{\beta}_{p-1+j}(\K[W]) > 0, j \geq 0 \right\}.$$  The result then follows by Hochster's formula.

First we prove that $g'_{p-1}(\K) \leq \g_{p-1}(\K)$.  Choose $W \subset V(\K)$ and $F \in \K$ so that $\tilde{H}_{p-1}(\lk_\K(F)[W];\field) \neq 0$.  We show that for some $F' \subseteq F$, $\tilde{H}_{p-1+|F'|}(\K[W \cup F'];\field) \neq 0$.  First consider the case that $F$ is a single vertex.  If $\tilde{H}_{p-1}(\K[W];\field) \neq 0$, then the inequality holds.  Otherwise, it follows from the Mayer-Vietoris sequence $$\ldots \rightarrow \tilde{H}_{p}(\K[W \cup F];\field) \rightarrow \tilde{H}_{p-1}(\lk_\K(F)[W];\field) \rightarrow \tilde{H}_{p-1}(\K[W];\field) \rightarrow \ldots$$ that $\tilde{H}_{p}(\K[W \cup F];\field) \neq 0$, and the inequality holds in this case as well.  Now suppose that $F$ contains several vertices, and let $v \in F$.  By induction on $|F|$, there exists $F' \subseteq F-\{v\}$ such that $\tilde{H}_{p-1+|F'|}(\lk_{\K}(v)[W \cup F'];\field) \neq 0$.  Now apply the previous argument to $\lk_\K(F'-\{v\})$.

Next we show that $g'_{p-1}(\K) \geq \g_{p-1}(\K)$.  Suppose that $\tilde{H}_{p-1+j}(\K[W];\field) \neq 0$ for some $|W|=g'_{p-1}(\K)+j$.  If $j=0$, then the claim is proven, so suppose that $j \geq 1$.  By defintion of $g'_{p-1}$, $W$ is of minimal size so that $\tilde{H}_{p-1+j}(\K[W];\field)$ is nonvanishing.  Hence by the Mayer-Vietoris sequence, for any vertex $v \in W$ it follows that $\tilde{H}_{p-2+j}(\lk_\K(v)[W-\{v\}];\field) \neq 0$.  Furthermore, by the definition of $g'_{p-1}$ and the argument of the previous paragraph, $W-\{v\}$ must be a minimal set with this property.  Hence by repeating this procedure $j$ times, there exists a face $F \subset W$ with $|F|=j$ such that $\tilde{H}_{p-1}(\lk_\K(F)[W-F];\field) \neq 0$.  This proves the lemma.
\end{proof}

The multiplicity conjecture is a prominent statement in commutative algebra.  Part of the statement places an upper bound on $f_{d-1}$ of a $(d-1)$-dimensional simplicial complex in terms of its maximal shifts, or equivalently, in terms of its girths.  In general, let $N$ be a graded module over $S$ with codimension $c$, multiplicity $e(N)$, and first $c$ maximal shifts $M_1,\ldots,M_c$.  Then $e(N) \leq M_1 \ldots M_c / c!$.  The conjecture was first posed in \cite{HerzSr98}, and it follows from the Boij-S\"{o}derberg conjecture \cite{BoijSo}.  The Boij-S\"{o}derberg conjecture was proven in \cite{EisSch} for the Cohen-Macaulay case, and generalized to the non-Cohen-Macaulay case in \cite{NonCMMC}.

For a $(d-1)$-dimensional simplicial complex $\K$ with $n$ vertices, $e(S/I_\K) = f_{d-1}(\K)$.  Therefore, in terms of girths, the multiplcity conjectures states that $$f_{d-1}(\K) \leq \frac{n(n-1)\ldots(n-d+1)}{(\g_0(\K)-1)\ldots (\g_{d-1}(\K)-1)}.$$ Although a general combinatorial proof of this result remains elusive, some papers such as \cite{NovSw} establish the result for some classes of simplicial complexes.  A simple proof for the one-dimensional case follows from the observation that if $\K$ has girth at least $g$ and $|W| = g-1$, then $\K[W]$ is a forest and has at most $g-2$ edges.  The result follows by adding over all such $W$.

The results in this paper are inspired by the observation that Theorem \ref{IrregularMoore} is generally much stronger than the multiplicity conjecture for the case of graphs.  We see that the bound of Conjecture \ref{GenConjecture} is generally much stronger than that of the multiplicity conjecture when $n$ is large and the girths are small.

\end{document}